\newtheorem{theorem}{Theorem}[section]
\newtheorem{lemma}[theorem]{Lemma}
\newtheorem{defn}{Definition}
\newtheorem{rem}{Remark}[section]
\newcommand{\R}{\mathbb{R}}
\newlength{\kaka}
\newcommand{\ahref}[2]{}
\newcommand{\obs}{^{\text{obs}}}
\newcommand{\ff}{^{\tiny \iota}}
\newcommand{\beq}{\begin{equation}}
\newcommand{\eeq}{\end{equation}}
\newcommand{\lb}{\label}
\newcommand{\bea}{\begin{eqnarray}}
\newcommand{\eea}{\end{eqnarray}}
\newcommand{\bxr}{\begin{array}}
\newcommand{\exr}{\end{array}}
\newcommand\exs{\hspace*{0.4mm}}
\newcommand\xxs{\hspace*{0.2mm}}
\newcommand\nxs{\hspace*{-0.2mm}}
\newcommand\txs{\hspace*{-0.1mm}}
\newcommand{\norms}[1]{\parallel\! #1 \!\parallel} 
\newcommand{\dbr}[1]{\llbracket #1 \rrbracket}
\newcommand{\bSig} {\boldsymbol{\Sigma}}
\newcommand{\bC} {\boldsymbol{C}}
\newcommand{\bK} {\boldsymbol{K}}
\newcommand{\bR} {\boldsymbol{\sf R}}
\newcommand{\bn} {\boldsymbol{n}}
\newcommand{\ba} {\boldsymbol{a}}
\newcommand{\bx} {\boldsymbol{x}}
\newcommand{\by} {\boldsymbol{y}}
\newcommand{\be} {\boldsymbol{e}}
\newcommand{\bz} {\boldsymbol{z}}
\newcommand{\bg} {{\boldsymbol{g}}}
\newcommand{\bq} {{\boldsymbol{q}}}
\newcommand{\butt}{{\texttt{\bf u}}}
\newcommand{\ptt}{{\texttt{p}}}
\newcommand{\bd} {\boldsymbol{d}}
\newcommand{\bI} {\boldsymbol{I}}
\newcommand{\pff}{\boldsymbol{u}^{\textit{i}}}
\newcommand{\bUsfr}{{{\textrm{\bf U}}^{\mathfrak{s}}}}
\newcommand{\buffr}{{{\textrm{\bf u}}^{\mathfrak{f}}}}
\newcommand{\Usfr}{{{\textrm{U}}^{\mathfrak{s}}}}
\newcommand{\uffr}{{{\textrm{u}}^{\mathfrak{f}}}}
\newcommand{\bpsfr}{{{\textrm{\bf p}}^{\mathfrak{s}}}}
\newcommand{\psfr}{{{\textrm{p}}^{\mathfrak{s}}}}
\newcommand{\pffr}{{{\textrm{p}}^{\mathfrak{f}}}}
\newcommand{\bTsfr}{{{\textrm{\bf T}}^{\mathfrak{s}}}}
\newcommand{\btffr}{{{\textrm{\bf t}}^{\mathfrak{f}}}}
\newcommand{\Tsfr}{{{\textrm{T}}^{\mathfrak{s}}}}
\newcommand{\tffr}{{{\textrm{t}}^{\mathfrak{f}}}}
\newcommand{\bqsfr}{{{\textrm{\bf q}}^{\mathfrak{s}}}}
\newcommand{\qsfr}{{{\textrm{q}}^{\mathfrak{s}}}}
\newcommand{\qffr}{{{\textrm{q}}^{\mathfrak{f}}}}
\newcommand{\sip} {\!\cdot\!}
\newcommand{\OOd}{\Omega_{\bd}}
\newcommand{\bzero}{\boldsymbol{0}}
\newcommand{\bu} {\boldsymbol{u}}
\newcommand{\bt} {{\boldsymbol{t}}}
\newcommand{\bxi} {\boldsymbol{\xi}}
\newcommand{\btu} {\text{\bf{u}}}
\newcommand{\bPhi}{\boldsymbol{\Phi}}
\newcommand{\dualGA}[2]{\left< #1, #2 \right>_{\Gamma}}
\newcommand{\bphi}{{\boldsymbol{\varphi}}}
\newsavebox{\@brx}
\newcommand{\llangle}[1][]{\savebox{\@brx}{\(\m@th{#1\langle}\)}%
  \mathopen{\copy\@brx\kern-0.65\wd\@brx\usebox{\@brx}}}
\newcommand{\rrangle}[1][]{\savebox{\@brx}{\(\m@th{#1\rangle}\)}%
  \mathclose{\copy\@brx\kern-0.65\wd\@brx\usebox{\@brx}}}
\newcommand{\bpsi}{{\boldsymbol{\psi}}}
\begin{document}

\begin{frontmatter}

\title{\textcolor{black}{Poroelastic near-field inverse scattering}}

\author{Fatemeh Pourahmadian$^{1,2}$\corref{cor1}} 
\author{Kevish Napal$^1$}
\address{$^1$ Department of Civil, Environmental \& Architectural Engineering, University of Colorado Boulder, USA}
\address{$^2$ Department of Applied Mathematics, University of Colorado Boulder, USA}
\cortext[cor1]{Corresponding author: tel. 303-492-2027, email {\tt fatemeh.pourahmadian@colorado.edu}}

\date{\today}

\begin{abstract}
A multiphysics data analytic platform is established for imaging poroelastic interfaces of finite permeability (e.g., hydraulic fractures) from elastic waveforms and/or acoustic pore pressure measurements. This is accomplished through recent advances in design of non-iterative sampling methods to inverse scattering. The direct problem is formulated via the Biot equations in the frequency domain where a network of discontinuities is illuminated by a set of total body forces and fluid volumetric sources, while monitoring the induced (acoustic and elastic) scattered waves in an arbitrary near-field configuration. A thin-layer approximation is deployed to capture the rough and multiphase nature of interfaces whose spatially varying hydro-mechanical properties are a priori unknown. In this setting, the well-posedness analysis of the forward problem yields the admissibility conditions for the contact parameters. In light of which, the poroelastic scattering operator and its first and second factorizations are introduced and their mathematical properties are carefully examined. It is shown that the non-selfadjoint nature of the Biot system leads to an intrinsically asymmetric factorization of the scattering operator which may be symmetrized at certain limits. These results furnish a robust framework for systematic design of regularized and convex cost functionals whose minimizers underpin the multiphysics imaging indicators. The proposed solution is synthetically implemented with application to spatiotemporal reconstruction of hydraulic fracture networks via deep-well measurements.        
\end{abstract}

\begin{keyword}
poroelastic waves, waveform tomography, hydraulic fractures, multiphysics sensing
 \end{keyword}
 
 \end{frontmatter}

\section{Introduction} \label{sec1}

Coupled physics processes driven by engineered stimulation of the subsurface underlie many emerging technologies germane to advanced energy infrastructure such as unconventional hydrocarbon and geothermal reservoirs~\cite{hofm2019,cau2017}. Optimal design and closed-loop control of such systems require real-time feedback on the nature of progressive variations in a target region~\cite{mass2017,Maxw2014}. 3D in-situ tracking of multiphasic developments however is exceptionally challenging~\cite{hou2021,snee2020}. Engineered treatments (such as fluid or gas injection) often occur in a complex background whose structure and hydro-mechanical properties are uncertain~\cite{zoba2019,Geoe2017}. Nevertheless, state-of-the-art solutions to in-situ characterization mostly rely on elastic waves~\cite{wape2021,sun2020,meti2018,Maxw2014}. In particular, micro-seismic waves generated during shear propagation of fractures are commonly used to monitor evolving discontinuities in rock~\cite{verd2020,Resh2010}. Such passive schemes are nonetheless agnostic to aseismic evolution~\cite{calo2011}, and involve significant assumptions on the nature of wave motion in the subsurface which may lead to critical errors~\cite{Gaje2017,shap2015,baig2010}. On the other hand, existing approaches to full waveform inversion~\cite{teod2021,trin2019,meti2018}, while offering more comprehensive solutions, are by and large computationally expensive and thus inapplicable for real-time sensing. Therefore, there exists a critical need for the next generation of imaging technologies that transcend some of these limitations. 

In this vein, recent advances in rigorous design of sampling-based waveform tomography solutions~\cite{Audibert2014,audi2017,bonn2019,pour2019,cako2016} seem particularly relevant owing to their robustness and newfound capabilities pertinent to imaging in complex and unknown domains~\cite{audi2017,pour2019,pour2020,pour2020(2)}. So far, these developments mostly reside in the context of self-adjoint and energy preserving systems such as Helmholtz and Navier equations. Wave attenuation, however, is a defining feature of the fractured rock systems with crucial impact on subsurface exploration~\cite{rubi2014,germ2013,alla2009}. In this context, a systematic analysis of sound absorption -- e.g.,~due to the fluid-solid interaction, and its implications on sampling-based data inversion is still lacking. This, in part, may be due to the sparse and single-physics nature of traditional data. Fast-paced progress in sensing instruments such as fiber optic (FBG) sensors -- capturing high-resolution multiple physical measurements in time-space~\cite{verd2020, qiao2017}, may bridge this gap and enable a holistic approach to subsurface characterization. This invites new data analytic tools capable of (a) expedited processing of large datasets, and (b) simultaneous inversion of multiphysics observations. The latter may particularly assist a high-fidelity reconstruction of the sought-for subterranean events.  

In this paper, the theoretical foundation of poroelastic inverse scattering is established with application to spatiotemporal tracking of hydraulic fracture networks in~unconventional energy systems. This method is nested within the framework of active sensing where the process zone is sequentially illuminated (during stimulation) via deep-well excitations prompting poroelastic scattering whose signature is recorded in the form of seismic and pore pressure waveforms. Thus-obtained multiphase sensory data are deployed for geometric reconstruction of treatment-induced evolution in the target region. Rooted in rigorous foundations of the inverse scattering theory~\cite{cako2016}, the proposed imaging solution carries a high spatial resolution with carefully controlled sensitivity to noise and illumination frequency.

The forward problem is posed in a suitable dimensional platform catering for simultaneous elastic and acoustic data inversion. A system of arbitrary-shaped hydraulic fractures with non-trivial (i.e.,~heterogeneous, dissipative, and finitely permeable) interfaces is illuminated by a plausible combination of total body forces and fluid volumetric sources in a medium governed by the coupled Biot equations~\cite{biot1956,biot1962}. Thus-induced multiphase scattered fields are used to define the poroelastic scattering operator mapping the source densities to near-field measurements. In this setting, the reconstruction scheme is based on (i) a custom factorization of the near-field operator, and (b) a sequence of approximate solutions to the scattering equation, seeking (fluid and total-force) source densities whose affiliated waveform pattern matches that of a designated poroelastodynamics solution emanating from a sampling point. The latter is obtained via a sequence of carefully constructed cost functionals whose minimizers can be computed without iterations. Such minimizing solutions are then used to construct a robust multiphysics imaging indicator whose performance is examined through a set of numerical experiments. 

In what follows, Section~\ref{PS} introduces the direct scattering problem and the necessary tools for the inverse analysis. In particular, Section~\ref{Prelim} investigates the well-posedness of the forward problem which leads to finding the admissibility conditions for the interfacial contact parameters including the elastic stiffness matrix and the permeability, effective stress and Skempton coefficients. This criteria plays a key role later in Section~\ref{SFS} where the essential properties of the poroelastic scattering operator and its components, in the first and second factorizations, are established. Section~\ref{SSA} presents the regularized cost functionals for solving the scattering equations, main theorems, and imaging functionals. Section~\ref{numerics} illustrates a set of numerical experiments implementing the reconstruction of (stationary and evolving) hydraulic fracture networks via the near-field measurements by way of the proposed imaging indicators.

\pagebreak
\section{Problem statement}\label{PS}

Consider the ball $\mathcal{B}'$ of radius $R\!>\!0$, centered at the origin in the subsurface $\Omega \subset \R^3$, encompassing the stimulation zone and sensing grid $\mathcal{G}$ as illustrated in Fig.~\ref{fig1}. Let $R$ be sufficiently large so that the probing waves may be assumed negligible in $\Omega \setminus \mathcal{B}'$ due to poroelastic attenuation. It is also assumed that $\mathcal{B}'$ does not meet $\partial \Omega$, which in hydraulic fracturing is a reasonable premise, in particular, since a treatment region is typically four to ten kilometers below the surface while $R = O(10^2)$m~\cite{hyma2016}. The domain $\mathcal{B}' \subset \Omega$ is described by drained Lam\'{e} coefficients $\lambda'$ and $\mu'$, Biot modulus $M'$, total density $\rho'$, fluid density $\rho_f'$, apparent mass density $\rho_a'$, permeability coefficient $\kappa'$, porosity $\phi$, and Biot effective stress coefficient $\alpha$. A system of pre-existing and stimulation-induced fractures and flow networks $\Gamma'$ is embedded in $\mathcal{B}'$ whose rough and multiphasic interfaces are characterized by a set of heterogeneous  parameters, namely: the stiffness matrix~$\bK'(\bxi')$, permeability coefficient~$\varkappa_{\textrm{\tiny f}}'(\bxi')$, effective stress coefficient~$\alpha_{\textrm{\tiny f}}(\bxi')$, Skempton coefficient~$\beta_{\textrm{\tiny f}}(\bxi')$, and fluid pressure dissipation factor $\Pi(\bxi')$ on $\bxi' \in \Gamma'$. The domain is excited by time harmonic total body forces of density $\bg'_s(\bxi',\omega')$ and fluid volumetric sources of density $g'_f(\bxi',\omega')$ on $\bxi' \in \mathcal{G}$ at frequency $\omega'$. This gives rise to poroelastic scattering observed on the sensing grid $\mathcal{G}$ in terms of solid displacements $\bu'(\bxi',\omega')$ and pore pressure $p'(\bxi',\omega')$.   

\begin{figure}[tp]
\center\includegraphics[width=0.7\linewidth]{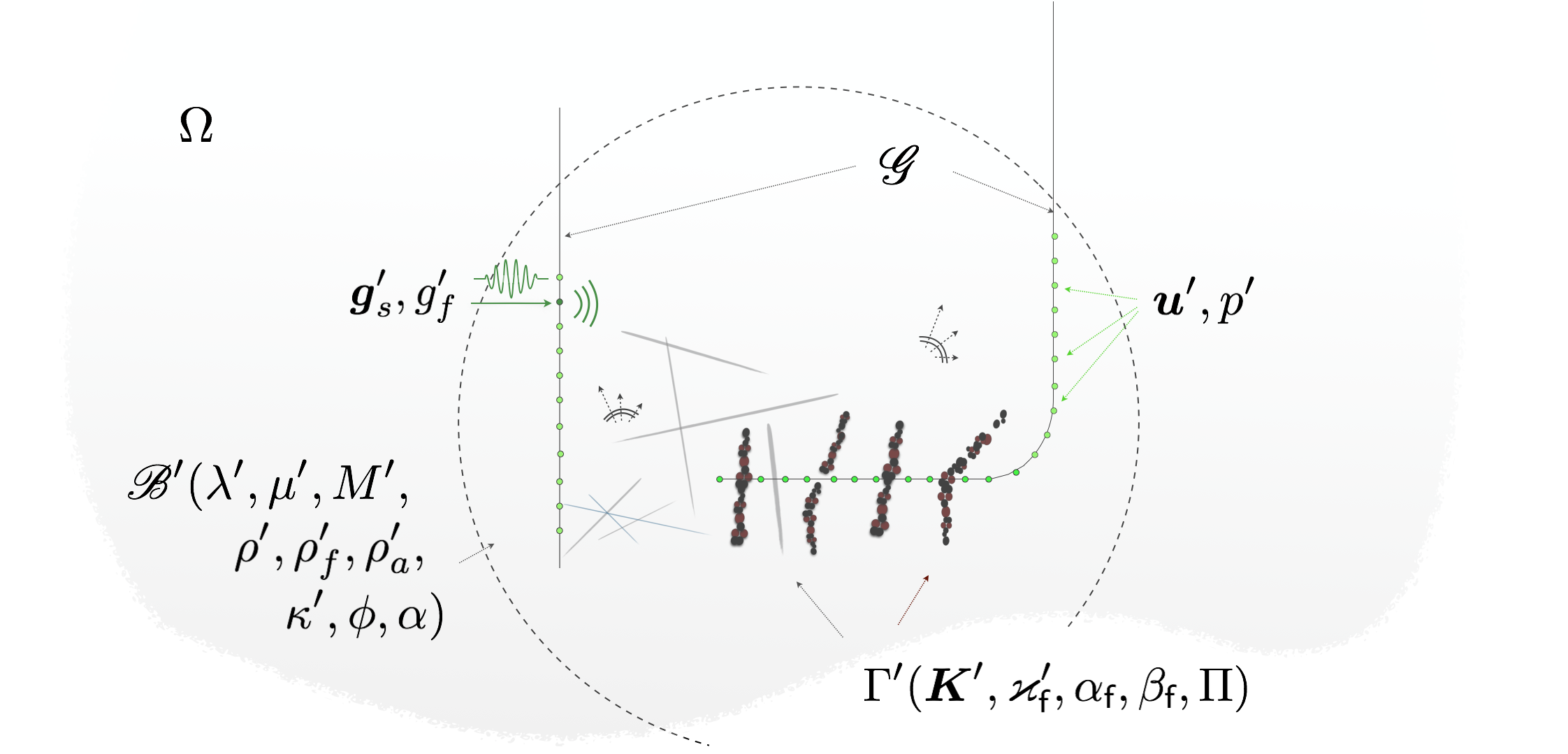} \vspace*{0mm} 
\caption{\small{Near-field illumination of a fracture network in the stimulation zone $\mathcal{B}' \subset \Omega$ featuring disjoint multiphasic discontinuities $\Gamma'\!$ characterized by heterogeneous elasticity~$\bK'(\bxi')$ and permeability~$\varkappa_{\textrm{\tiny f}}'(\bxi')$. Poroelastic incidents are induced via total body forces and fluid volumetric sources of respective densities $\bg'_s(\bxi',\omega')$ and $g'_f(\bxi',\omega')$ on a designated grid $\bxi' \in \mathcal{G}$ at frequency $\omega'$, which give rise to the scattered fields captured on $\mathcal{G}$ in the form of pore pressure $p'$ and solid displacements $\bu'\!$.}}
 \lb{fig1}
\end{figure} 

\vspace{-2mm}
 \subsection{Dimensional platform}\label{DiP} 
To enable multiphasic data inversion, the reference scales
 \vspace{-0mm}
\beq\lb{refSc}
\mu_r \,=\, \lambda', \quad \rho_r \,=\, \rho', \quad \ell_r \,=\, 2\pi \sqrt{\dfrac{\mu'}{(1-\phi){\rho}\xxs'_s {\omega'}^2}}, 
 \vspace{-0mm}
\eeq
are respectively defined for stress, mass density, and length. Here, $\rho_s'$ denotes the solid-phase density. Note that $\ell_r$ represents the drained shear wavelength. In this setting, all physical quantities are rendered \emph{dimensionless}~\cite{Scaling2003} as follows

 \vspace{-0mm}
\beq\nonumber
\begin{aligned}
& (\bxi',\omega') ~\rightarrow~ (\bxi,\omega):= \big(\dfrac{1}{\ell_r}\bxi', \sqrt{\dfrac{\rho_r}{\mu_r}}\ell_r \omega' \big), \\*[0.25mm]
\end{aligned}      
 \vspace{-0mm}
\eeq
 \vspace{-1mm}
\beq\lb{Dp}
\begin{aligned}
& \mathcal{B}'(\lambda',\mu',M',\rho',\rho_f',\rho_a',\kappa',\phi,\alpha) ~\rightarrow~   \\*[0.25mm]
& \mathcal{B}(\lambda,\mu,M,\rho,\rho_f,\rho_a,\kappa,\phi,\alpha) :=  \mathcal{B}(\dfrac{\lambda'}{\mu_r},\dfrac{\mu'}{\mu_r},\dfrac{M'}{\mu_r},\dfrac{\rho'}{\rho_r},\dfrac{\rho_f'}{\rho_r},\dfrac{\rho_a'}{\rho_r},\dfrac{\sqrt{\mu_r \rho_r}}{\ell_r}\kappa',\phi,\alpha), \\*[0.25mm]
& {\Gamma}'(\bK',\varkappa_{\textrm{\tiny f}}',\alpha_{\textrm{\tiny f}},\beta_{\textrm{\tiny f}},\Pi) ~\rightarrow~ {\Gamma}(\bK,\varkappa_{\textrm{\tiny f}},\alpha_{\textrm{\tiny f}},\beta_{\textrm{\tiny f}},\Pi) := {\Gamma}(\dfrac{\ell_r}{\mu_r}\bK', \sqrt{\mu_r \rho_r} \varkappa_{\textrm{\tiny f}}',\alpha_{\textrm{\tiny f}},\beta_{\textrm{\tiny f}},\Pi), \\*[0.0mm]
& (\bg_s',g_f') ~\rightarrow~ (\bg_s,g_f):= \big(\dfrac{\ell_r}{\mu_r} \bg_s', g_f' \big), \\*[0.5mm]
& (\bu',p') ~\rightarrow~ (\bu,p):= \big(\dfrac{\bu'}{\ell_r}, \dfrac{p'}{\mu_r} \big).
\end{aligned}      
 \vspace{-1mm}
\eeq
 
 \vspace{-2mm}
\subsection{Field equations}

The incident field $(\bu\ff,p\ff) \in H^1_{\tiny\textrm{loc}}({\R^3}\!\setminus\! \overline{\mathcal{G}})^3 \nxs\times\nxs H^1_{\tiny\textrm{loc}}({\R^3}\!\setminus\! \overline{\mathcal{G}})$ generated by $(\bg_s,g_f) \in L^2(\mathcal{G})^3 \nxs\times\nxs L^2(\mathcal{G})$ in the background is governed by
 \vspace{-1mm}
\beq\lb{uf}
\begin{aligned}
&\nabla \exs\sip\exs (\bC \exs \colon \! \nabla \bu\ff)(\bxi) \,-\, (\alpha-\dfrac{\rho_f}{\gamma}) \nabla p\ff(\bxi)  \,+\, \omega^2 (\rho-\dfrac{\rho_f^2}{\gamma}) \bu\ff(\bxi)\,=\, -\int_\mathcal{G} \delta(\by-\bxi) \bg_s(\by) \exs \textrm{d}{\by}, \,\, &  \bxi \in {\R^3}\!\setminus\! \overline{\mathcal{G}}, \\*[0.75mm] 
&\dfrac{1}{\gamma\omega^2} \nabla^2p\ff(\bxi) \,+\, {M}^{-1} p\ff(\bxi) \,+\, (\alpha-\dfrac{\rho_f}{\gamma}) \nabla \exs\sip\exs \bu\ff(\bxi)  \,=\, -   \int_\mathcal{G} \delta(\by-\bxi) g_f(\by) \exs \textrm{d}{\by},   &  \bxi \in {\R^3}\!\setminus\! \overline{\mathcal{G}}, 
\end{aligned}   
 \vspace{-0mm}
\eeq  
where $\bC = \lambda\exs\bI_2\!\otimes\bI_2 + 2\mu\exs\bI_4$ is the fourth-order drained elasticity tensor with $\bI_m \,(m\!=\!2,4)$ denoting the $m$th-order symmetric identity tensor, and 
 \vspace{-1mm}
\beq\lb{gam}
\gamma ~=~ \frac{\rho_a}{\phi^2} \,+\, \frac{\rho_f}{\phi} \,+\, \frac{\textrm{i}}{\omega \kappa}.
 \vspace{-1mm}
\eeq
 
The interaction of $(\bu\ff,p\ff)$ with $\Gamma$ gives rise to the scattered field $(\bu,p) \in H^1_{\tiny\textrm{loc}}({\R^3\!\setminus\!\overline{\Gamma}})^3 \nxs\times\nxs H^1_{\tiny\textrm{loc}}({\R^3\!\setminus\!\overline{\Gamma}})$ solving 
 \vspace{-1mm}
\beq\lb{GE}
\begin{aligned}
&\nabla \exs\sip\exs (\bC \exs \colon \! \nabla \bu) \,-\, (\alpha-\dfrac{\rho_f}{\gamma}) \nabla p  \,+\, \omega^2 (\rho-\dfrac{\rho_f^2}{\gamma}) \bu\,=\, \bzero \quad& \text{in}&\,\,\, {\R^3\!\setminus\!\overline{\Gamma}}, \\*[0.75mm]
&\dfrac{1}{\gamma\omega^2} \nabla^2p \,+\, {M}^{-1} p \,+\, (\alpha-\dfrac{\rho_f}{\gamma}) \nabla \exs\sip\exs \bu  \,=\, 0  \quad & \text{in}&\,\,\,  {\R^3\!\setminus\!\overline{\Gamma}}, \\*[1.25mm]
& \bt \,+\, \tilde{\alpha}_{\textrm{\tiny f}} \llangle p \rrangle \bn \,=\, \bK \dbr{\bu} \,-\, \bt\ff \,-\, \tilde{\alpha}_{\textrm{\tiny f}} \exs p\ff  \bn \quad& \text{on}&\,\,\,  {\Gamma}, \\*[1.25mm]
&   \llangle p \rrangle \,+\, {\beta}_{\textrm{\tiny f}} \exs \bt\sip\bn \,=\, - \dfrac{\textrm{k}_n {\beta}_{\textrm{\tiny f}}}{\Pi \alpha_{\textrm{\tiny f}}} \dbr{q} \,-\,  p\ff \,-\, {\beta}_{\textrm{\tiny f}} \exs \bt\ff\sip\bn  \quad& \text{on}&\,\,\,  {\Gamma}, \\*[1mm]
&  \llangle q \rrangle \,=\, \dfrac{\varkappa_{\textrm{\tiny f}}}{\textrm{i}\omega\Pi} \dbr{p} \,-\, q\ff, \quad \dbr{\bt}=\bzero \quad & \text{on}&\,\,\,  {\Gamma}, 
\end{aligned}   
 \vspace{1mm}
\eeq 
where $(\bt,q) \in H^{-1/2}(\Gamma)^3 \nxs\times\nxs H^{-1/2}(\Gamma)$ and $(\bt\ff,q\ff) \in H^{-1/2}(\Gamma)^3 \nxs\times\nxs H^{-1/2}(\Gamma)$ are defined by 
\beq\lb{Neu}
\begin{aligned}
&[\bt,q](\bu,p) \,:=\, \big[\bn \nxs\cdot\nxs \bC \colon \!\nxs \nabla \bu - \alpha p \exs \bn, \, \dfrac{1}{\gamma\omega^2}(\nabla p \sip \bn - \rho_f \omega^2 \bu\sip\bn) \big]  \,\,\, & \text{on} \,\,\,  {\Gamma}, \\*[1mm]
&[\bt\ff,q\ff](\bu\ff,p\ff) \,:=\, \big[\bn \nxs\cdot\nxs \bC \colon \!\nxs \nabla \bu\ff - \alpha p\ff  \bn, \, \dfrac{1}{\gamma\omega^2}(\nabla p\ff\nxs \sip \bn - \rho_f \omega^2 \bu\ff\nxs\sip\bn) \big] \,\,\, & \text{on} \,\,\,  {\Gamma}; 
\end{aligned}
\vspace*{-1mm}
\eeq
the unit normal $\bn = \bn^-$ on~$\Gamma$ is explicitly identified in Section~\ref{Fsp}; 
\[
\tilde{\alpha}_{\textrm{\tiny f}} \,:=\, \dfrac{\alpha_{\textrm{\tiny f}} \Pi}{1-\alpha_{\textrm{\tiny f}} \beta_{\textrm{\tiny f}}(1-\Pi)}; 
\]
$\big( \dbr{\bu},\dbr{p},\dbr{q} \big) = \big( \bu^+\!-\bu^-, p^+\! - p^-, q^+\! - q^- \big)$ signifies the jump in~$( \bu,p,q )$ across~$\Gamma$, while $$\big(  \llangle{\bu}\rrangle,\llangle{p}\rrangle,\llangle{q}\rrangle \big)  = \dfrac{1}{2} \big(  \bu^+\!+\bu^-, p^+\! + p^-, q^+\! + q^- \big) $$ is the respective mean fields on~$\Gamma\,$; $\bK=\bK(\bxi)$, according to~\cite{naka2007}, is a \emph{symmetric} and possibly \emph{complex-valued} matrix of specific stiffnesses which in the fracture's local coordinates $(\be_1,\be_2,\bn)(\bxi)$ may be described as the following
\beq\lb{contact}
\bK \,:=\, \textrm{k}_t \exs \be_1 \!\otimes\nxs \be_1 \,+\, \textrm{k}_t \exs \be_2 \!\otimes\nxs \be_2 \,+\, \dfrac{\tilde{\alpha}_{\textrm{\tiny f}}\textrm{k}_n}{{\alpha}_{\textrm{\tiny f}}\Pi} \exs \bn \!\otimes\nxs \bn \quad \text{on} \,\,\,  {\Gamma},
\eeq
wherein $\textrm{k}_t(\bxi)$ and $\textrm{k}_n(\bxi)$ are known respectively as the tangential and normal drained elastic stiffnesses. It should be mentioned that the contact condition across $\Gamma$ makes use of the generalized Schoenberg's model for a poroelatic interface of finite permeability~\cite{naka2007}. 
 
\begin{rem}
The contact law at high-permeability interfaces~\cite{naka2007} is reduced to the following form
 \beq\lb{HPI}\nonumber
 \begin{aligned}
& \bt \,+\, {\alpha}_{\textrm{\tiny \emph{f}}} \exs  p  \bn \,=\, \bK_{\nxs \infty} \dbr{u} \,-\, \bt\ff \,-\, {\alpha}_{\textrm{\tiny \emph{f}}} \exs p\ff  \bn \quad& \text{on}&\,\,\,  {\Gamma}, \\*[1mm]
&   p  \,+\, {\beta}_{\textrm{\tiny \emph{f}}} \exs \bt\sip\bn \,=\, - \textrm{\emph{k}}_n \dfrac{ {\beta}_{\textrm{\tiny \emph{f}}}}{\alpha_{\textrm{\tiny \emph{f}}}} \dbr{q} \,-\,  p\ff \,-\, {\beta}_{\textrm{\tiny \emph{f}}} \exs \bt\ff\sip\bn  \quad& \text{on}&\,\,\,  {\Gamma}, \\*[0.75mm]
&  \dbr{p} \,=\, 0, \,\,\,\, \dbr{\bt} = \bzero \quad & \text{on}&\,\,\,  {\Gamma}. 
\end{aligned}  
 \eeq
 where $\bK_{\nxs \infty}  := \textrm{\emph{k}}_t \exs \be_1 \!\otimes\nxs \be_1 \exs+\exs \textrm{\emph{k}}_t \exs \be_2 \!\otimes\nxs \be_2 \exs+\exs \textrm{\emph{k}}_n \exs \bn \!\otimes\nxs \bn$ on ${\Gamma}$.
\end{rem}

The formulation of the direct scattering problems may now be completed by requiring that~$(\bu,p)$ and $(\bu\ff,p\ff)$ satisfy the radiation condition as $|\bxi| \rightarrow \infty$~\cite{norr1985}. On uniquely decomposing the incident and scattered fields into two irrotational parts~(${\textrm{p}_1}$, ${\textrm{p}_2}$) and a solenoidal part~(${\textrm{s}}$) as 
\beq\lb{dec}
 (\bu\ff,p\ff)  = (\bu\ff_{\textrm{p}_1\!},p\ff_{\textrm{p}_1\nxs}) + (\bu\ff_{\textrm{p}_2\!},p\ff_{\textrm{p}_2\nxs}) + (\bu\ff_{\textrm{s}},p\ff_{\textrm{s}}), \quad
 (\bu,p)   = (\bu_{\textrm{p}_1\!},p_{\textrm{p}_1\nxs}) + (\bu_{\textrm{p}_2\!},p_{\textrm{p}_2\nxs}) + (\bu_{\textrm{s}},p_{\textrm{s}}),
\eeq  
 the radiation condition can be stated as 
\beq\lb{KS}
\begin{aligned}
& \frac{\partial{\textrm{\bf u}}_\varsigma}{\partial r} - \text{i} k_\varsigma {\textrm{\bf u}}_\varsigma = o\big(r^{-1} e^{-a_\varsigma r}\big)   \quad \text{as} ~~r:=|\bxi|\to \infty, \quad a_\varsigma:=\mathfrak{I}(k_\varsigma), \,\,\, \varsigma = {\textrm{p}_1},{\textrm{p}_2},{\textrm{s}}, \,\,\, \textrm{\bf u} = \bu\ff, \bu, \\*[0.5mm]
& \frac{\partial{\textrm{p}}_\varsigma}{\partial r} - \text{i} k_\varsigma {\textrm{p}}_\varsigma = o\big(r^{-1} e^{-a_\varsigma r}\big)   \quad \text{as} ~~r\to \infty, \qquad  \varsigma = {\textrm{p}_1},{\textrm{p}_2}, \,\,\, \textrm{p} = p\ff, p,
\end{aligned}
\eeq  
uniformly with respect to $\hat\bxi:=\bxi/r$ where $k_\varsigma$, $\varsigma = {\textrm{p}_1},{\textrm{p}_2},{\textrm{s}}$, is the complex-valued wavenumber associated with the slow and fast p-waves and the transverse s-wave~\cite{bour1992}. 

\vspace{-3mm}
\subsection{Function spaces}\label{Fsp}
For clarity of discussion, it should be mentioned that the support of $\Gamma$ may be decomposed into $N$ smooth open subsets $\Gamma_n \!\subset \Gamma$, $n=1,\ldots N$, such that $\Gamma \!=\! {\textstyle \bigcup_{n =1}^{N}} \Gamma_n$. The support of $\Gamma_n$ may be arbitrarily extended to a closed Lipschitz surface $\partial \text{\sf D}_n$ of a bounded simply connected domain $\text{\sf D}_n$. In this setting, the unit normal vector $\bn$ to $\Gamma_n$ coincides with the outward normal vector to $\partial \text{\sf D}_n$. Let $\text{\sf D} = {\textstyle \bigcup_{n =1}^{N}} \text{\sf D}_n$ be a multiply connected Lipschitz domain of bounded support such that $\Gamma \subset \partial \text{\sf D}$, then $\Gamma$ is assumed to be an open set relative to $\partial \text{\sf D}$ with a positive surface measure, and the closure of $\Gamma$ is denoted by $\overline{\Gamma} \colon \!\!\! =  \Gamma \cup \partial\Gamma$.   
Following~\cite{McLean2000}, we define
\beq\lb{funS2}
\begin{aligned}
&H^{\pm \frac{1}{2}}(\Gamma) ~:=~\big\lbrace f\big|_{\Gamma} \! \colon \,\,\, f \in H^{\pm \frac{1}{2}}(\partial \text{\sf D}) \big\rbrace, \\*[0.0 mm]
& \tilde{H}^{\pm \frac{1}{2}}(\Gamma) ~:=~\big\lbrace  f \in H^{\pm\frac{1}{2}}(\partial \text{\sf D}) \colon  \,\,\, \text{supp}(f) \subset \overline{\Gamma} \exs \big\rbrace,
\end{aligned}
\eeq
and recall that $H^{-1/2}(\Gamma)$ and $\tilde{H}^{-1/2}(\Gamma)$ are respectively the dual spaces of $\tilde{H}^{1/2}(\Gamma)$ and $H^{1/2}(\Gamma)$. Accordingly, the following embeddings hold
\beq\lb{embb}
\tilde{H}^{\frac{1}{2}}(\Gamma) \,\subset\, H^{\frac{1}{2}}(\Gamma) \,\subset\, L^2(\Gamma) \,\subset\, \tilde{H}^{-\frac{1}{2}}(\Gamma) \,\subset\, H^{-\frac{1}{2}}(\Gamma).
\eeq

Note that since $(\bu,p) \in H^1_{\tiny\textrm{loc}}({\R^3}\nxs\setminus\nxs \overline{\Gamma})^3 \nxs\times\nxs H^1_{\tiny\textrm{loc}}({\R^3}\nxs\setminus\nxs\overline{\Gamma})$, then by trace theorems $(\dbr{\bu},\dbr{p})\in\tilde{H}^{1/2}(\Gamma)^3 \nxs\times\nxs \tilde{H}^{1/2}(\Gamma)$, $(\bt,q) \in H^{-1/2}(\Gamma)^3 \nxs\times\nxs H^{-1/2}(\Gamma)$, and $\dbr{q} \in \tilde{H}^{-1/2}(\Gamma)$.

\begin{rem}[Wellposedness of the forward scattering problem]\label{R_Wellp}
In light of~\eqref{GE}, let us define the interface operator $\mathfrak{P}: \tilde{H}^{{1}/{2}}(\Gamma)^3\times \tilde{H}^{{1}/{2}}(\Gamma) \times \tilde{H}^{-{1}/{2}}(\Gamma) \rightarrow {H}^{-{1}/{2}}(\Gamma)^3\times {H}^{-{1}/{2}}(\Gamma) \times {H}^{{1}/{2}}(\Gamma)$ such that 
\beq\lb{frakP}
\mathfrak{P}(\dbr{\bu},\dbr{\xxs p \xxs},-\dbr{q \xxs})\,:=\,(\bt+\bt\ff,\llangle{q}\rrangle+q\ff,\llangle{\xxs p}\rrangle+p\ff) \qquad \text{on}\quad  {\Gamma}.
\eeq 
Then,~\eqref{GE}-\eqref{KS} is wellposed provided that $\mathfrak{P}$ is bounded and 
 \beq\lb{ImP}
 \Im \dualGA{\mathfrak{P}\bphi}{\bphi} \,\leqslant\, 0,\, \qquad \forall \exs \bphi \in \tilde{H}^{{1}/{2}}(\Gamma)^3\times \tilde{H}^{{1}/{2}}(\Gamma) \times \tilde{H}^{-{1}/{2}}(\Gamma):~ \bphi \neq \bzero,
 \eeq  
 given the duality product \[
\langle \cdot, \cdot \rangle_{\Gamma} ~=~ \big\langle {H}^{-\frac{1}{2}}(\Gamma)^3 \nxs\times {H}^{-\frac{1}{2}}(\Gamma) \times {H}^{\frac{1}{2}}(\Gamma), \, \tilde{H}^{\frac{1}{2}}(\Gamma)^3 \nxs\times \tilde{H}^{\frac{1}{2}}(\Gamma) \times \tilde{H}^{-\frac{1}{2}}(\Gamma) \big\rangle.
\]
Detailed analysis is included in~\ref{Wellp}.
\end{rem}

\subsection{Poroelastic scattering}\lb{Prelim}   
\renewcommand{\OOd}{{\mathcal{G}}}
\renewcommand{\pff}{\btu}

In this section, the poroelastic scattering operator is introduced and its first and second factorizations are formulated. In what follows, the Einstein's summation convention applies over the repeated indexes. 

\vspace{-2mm}
\subsubsection{Incident wave function \!\!} 
Observe that for a given density $L^2(\mathcal{G})^3\times L^2(\mathcal{G}) \ni \bg := (\bg_s,g_f)$, the poroelastic incident field $(\bu\ff,p\ff) \in H^1_{\tiny\textrm{loc}}({\R^3}\nxs\setminus\nxs \overline{\mathcal{G}})^3 \times H^1_{\tiny\textrm{loc}}({\R^3}\nxs\setminus\nxs\overline{\mathcal{G}})$ solving~\eqref{uf} may be recast in the integral form 
\beq\lb{HW}
\begin{aligned}
&  \left[ \!\! \begin{array}{l}
\\*[-5.45mm]
u\ff_i \\*[-0.1mm]
\exs\exs p\ff 
\end{array} \!\!\nxs \right]\!(\bxi) ~=~  \int_{\OOd} \left[ \!\! \begin{array}{ll}
\Usfr_{\!\!\! ij} \!\! &\!\! \uffr_{\nxs\! i} \\*[0.1mm]
\exs \psfr_{\!\!\! j} \!&\!\! \pffr \\
\end{array} \!\!\nxs \right]\!(\by,\bxi) \,\sip
\left[ \!\!\! \begin{array}{l}
{\exs g_s^{\xxs j}} \\*[-0.5mm]
{\exs g_f} 
\end{array} \!\!\! \right]\!\!(\by) \exs \text{d}{\by},  \qquad \bxi \in \R^3\nxs\setminus\nxs\overline{\mathcal{G}},  \qquad i,j = 1,2,3,
\end{aligned}
\eeq
whose kernel is the poroelastodynamics fundamental solution tensor provided in~\ref{fund}.~Here, $\Usfr_{\!\!\! ij}(\by,\bxi)$ $(i,j = 1,2,3)$ signifies the $i^\text{\xxs th}$ component of the solid displacement at $\bxi$ due to a unit total body force applied at $\by$ along the coordinate direction $j$, while $\psfr_{\!\!\! j}$ is the associated pore pressure. Likewise, $\uffr_{\nxs\! i}(\by,\bxi)$ $(i = 1,2,3)$ stands for the solid displacement at $\bxi$ in the $i^\text{\xxs th}$ direction due to a unit fluid source i.e.,~volumetric injection at $\by$, and $\pffr$ is the induced pore pressure.   

\vspace{-2mm}
\subsubsection{Poroelastic scattered field \!\!}
By way of the reciprocal theorem of poroelastodynamics~\cite{chen1991,scha2012}, one may show that the following integral representation holds for the scattered field $(\bu,p) \in H^1_{\tiny\textrm{loc}}({\R^3}\nxs\setminus\nxs\overline{\Gamma})^3 \nxs\times\nxs H^1_{\tiny\textrm{loc}}({\R^3}\nxs\setminus\nxs\overline{\Gamma})$ satisfying~\eqref{GE}-\eqref{KS},
\vspace{-0mm}
\beq\lb{vinf2}
\begin{aligned}
&  \left[ \!\! \begin{array}{l}
\\*[-5.85mm]
u_i \\*[-1mm]
\exs\exs p 
\end{array} \!\!\nxs \right]\!(\bxi) ~=~\! \int_{\Gamma}
\left[ \!\! \begin{array}{lll}
\\*[-5.85mm]
\Tsfr_{\!\!\! ij} \!\! &\!\! \qsfr_{\!\!\! i} \!\! &\! \psfr_{\!\!\! i} \\*[0mm]
 \tffr_{\nxs\! j} \!\! &\!\! \qffr \!\! &\!  \pffr
\end{array} \!\!\nxs \right]\!(\by,\bxi) \,\sip
\left[ \!\!\! \begin{array}{l}
\dbr{u_j} \\*[-0.5mm]
\exs\exs \dbr{\exs p\hspace{0.2mm}} \\*[-0.5mm]
 \textrm{\small -} \dbr{\hspace{0.2mm}q\hspace{0.15mm}}
\end{array} \!\!\! \right]\!\!(\by) \exs \text{d}S_{\by}, \qquad \bxi\in\R^3\nxs\setminus\nxs\overline{\Gamma}, \qquad i,j = 1,2,3,
\end{aligned} 
\eeq
whose kernel is derived from the fundamental solution tensor and provided in~\ref{fund}. More specifically, $\bTsfr$ and $\btffr$ indicate the fundamental tractions on $\Gamma$; $\bqsfr$ and $\qffr$ signify the associated relative fluid-solid displacements across $\Gamma$; $\bpsfr$ and $\pffr$ are the fundamental pore pressure solutions. 

\vspace{-2mm}
\subsubsection{Poroelastic scattering operator\!\!}

The linearity of~\eqref{GE} implies that the scattered wave function may be expressed as a linear integral operator. To observe this, let $\R^4 \ni \bd := (\bd_s,d_p)$ be a constant vector characterizing an incident point source wherein $\bd_s$ indicates the amplitude and direction of the total body force, while $d_p$ represents the magnitude of fluid volumetric source both applied at $\by \in \mathcal{G}$. The resulting scattered field is observed in terms of seismic displacements $\bu_{\bd}$ and pore pressure $p_{\bd}$ at $\bxi \in \mathcal{G}$. Now, let us define the scattering kernel $\boldsymbol{\mathfrak{S}}(\by,\bxi)\in\mathbb{C}^{4\times 4}$ so that 
\beq\lb{w-inf}
\boldsymbol{\mathfrak{S}}(\by,\bxi) \sip \bd ~:=~ (\bu_{\bd},\exs p_{\bd})(\bxi), \qquad \by,\bxi \in \mathcal{G}.
\eeq
Then, given $\bg \in L^2(\mathcal{G})^3 \times L^2(\mathcal{G})$, one may easily verify that  
\beq\lb{ffo2} 
(\bu,p)(\bxi) ~=\,  \int_{\OOd} \boldsymbol{\mathfrak{S}}(\by,\bxi) \sip \bg(\by) \,\, \text{d}S_{\by}, \qquad \by,\bxi \in \mathcal{G},
\eeq 
where $(\bu,p) \in L^2(\mathcal{G})^3 \times L^2(\mathcal{G})$ satisfies~\eqref{GE}-\eqref{KS}. 

\noindent Accordingly, one may define the poroelastic scattering operator $\Lambda: L^2(\OOd)^3 \times L^2(\mathcal{G}) \to L^2(\OOd)^3 \times L^2(\mathcal{G})$ by
\beq\lb{ffo0}
\Lambda(\bg) ~=~ (\bu,p)|_{\mathcal{G}}.
\eeq 

\vspace{-3mm}
\subsubsection{Factorization of $\Lambda \!\!\!$}
With reference to~\eqref{uf} and~\eqref{Neu}, let us define the operator $\mathcal{S} \colon L^2(\OOd)^3 \times L^2(\mathcal{G}) \rightarrow H^{-1/2}(\Gamma)^3 \times H^{-1/2}(\Gamma) \times H^{1/2}(\Gamma)$ given by 
\beq\lb{oH}
\mathcal{S}(\bg) ~:=~ (\bt^\iota, q^\iota, p^\iota) \quad~~ \text{on}\quad \Gamma,
\eeq  
where $\bt^\iota$ specifies the incident field traction, $q^\iota$ is the specific relative fluid-solid displacement across $\Gamma$, and $p^\iota$ is the incident pore pressure on $\Gamma$. Next, define $\mathcal{P} \colon H^{-1/2}(\Gamma)^3 \nxs\times\nxs H^{-1/2}(\Gamma) \nxs\times\nxs H^{1/2}(\Gamma) \rightarrow L^2(\OOd)^3\times L^2(\mathcal{G})$ as the map taking the incident traction, relative flow and pressure $(\bt^\iota, q^\iota, p^\iota)$ over $\Gamma$ to the multiphase scattered data $(\bu,p) \in L^2(\mathcal{G})^3 \times L^2(\mathcal{G})$ satisfying \eqref{GE}-\eqref{KS}. Then, the scattering operator~\eqref{ffo0} may be factorized as  
\beq\lb{fac1}
\Lambda ~=~ \mathcal{P} \mathcal{S}.
\eeq   
\begin{lemma}\label{H*}
The adjoint operator $\mathcal{S}^* \colon \tilde{H}^{1/2}(\Gamma)^3 \nxs\times\nxs \tilde{H}^{1/2}(\Gamma) \nxs\times\nxs \tilde{H}^{-1/2}(\Gamma) \rightarrow L^2(\OOd)^3 \nxs\times\nxs L^2(\mathcal{G})$ takes the form
\beq\lb{Hstar}
\mathcal{S}^*(\ba) ~:=~ (\bu_{\ba}, p_{\ba})(\bxi), \qquad \bxi \in \mathcal{G}, 
\eeq  
where $(\bu_{\ba}, p_{\ba}) \in H^1_{\tiny\textrm{loc}}({\R^3}\nxs\setminus\nxs\overline{\Gamma})^3 \nxs\times\nxs H^1_{\tiny\textrm{loc}}({\R^3}\nxs\setminus\nxs\overline{\Gamma})$ solves
\vspace{-1mm}
\beq\lb{up_a}
\begin{aligned}
&\nabla \exs\sip\exs (\bC \exs \colon \! \nabla \bu_{\ba}) \,-\, (\alpha-\dfrac{\rho_f}{\bar{\gamma}}) \nabla p_{\ba}  \,+\, \omega^2 (\rho-\dfrac{\rho_f^2}{\bar{\gamma}}) \bu_{\ba}\,=\, \bzero \quad& \text{in}&\,\,\, {\R^3\nxs\setminus\nxs\overline{\Gamma}}, \\*[1mm]
&\dfrac{1}{\bar{\gamma}\omega^2}\nabla^2p_{\ba} \,+\, {M}^{-1} p_{\ba} \,+\, (\alpha-\dfrac{\rho_f}{\bar{\gamma}}) \nabla \exs\sip\exs \bu_{\ba}  \,=\, 0  \quad & \text{in}&\,\,\,  {\R^3}\nxs\setminus\nxs\overline{\Gamma}, \\*[2mm]
& (\dbr{\bu_{\ba}},\dbr{p_{\ba}},-\dbr{q_{\ba}}) \,=\, \ba, \quad \dbr{\bt_{\ba}} = \bzero \quad& \text{on}&\,\,\,  {\Gamma}.
\end{aligned}   
\eeq 

Here, the `bar' indicates complex conjugate, and
\[
[\bt_{\ba},q_{\ba}](\bu_{\ba},p_{\ba}) \,:=\, \big[\bn \nxs\cdot\nxs \bC \colon \!\nxs \nabla \bu_{\ba} - \alpha \exs p_{\ba}  \bn, \, \dfrac{1}{\bar{\gamma}\omega^2}(\nabla p_{\ba} \sip\exs \bn - \rho_f \omega^2 \bu_{\ba} \sip\exs \bn) \big] \quad  \text{on} \,\,\,  {\Gamma}. 
\]
Note that $(\bu_{\ba},p_{\ba})$ satisfy the radiation condition as $|\bxi|\to\infty$, similar to the complex conjugate of~\eqref{KS}. 
\end{lemma}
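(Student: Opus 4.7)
The task is to verify that for all $\bg \in L^2(\mathcal{G})^3\times L^2(\mathcal{G})$ and $\ba \in \tilde{H}^{1/2}(\Gamma)^3\times\tilde{H}^{1/2}(\Gamma)\times\tilde{H}^{-1/2}(\Gamma)$,
\[
\langle\mathcal{S}\bg,\,\ba\rangle_\Gamma \,=\, (\bg,\,\mathcal{S}^*\ba)_{L^2(\mathcal{G})}.
\]
I would prove this by establishing a poroelastodynamic reciprocity identity between the incident field $(\bu^\iota,p^\iota)$ generated by $\bg$ and the complex conjugate of the field $(\bu_\ba,p_\ba)$ defined through~\eqref{up_a}, and then reading off $\mathcal{S}^*$ from the surviving boundary contributions.

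First I would note that, since the background coefficients $\bC,M,\alpha,\rho,\rho_f,\kappa$ are real-valued, taking the complex conjugate of the PDEs in~\eqref{up_a} converts the $\bar\gamma$-system into the $\gamma$-system of~\eqref{uf}. Hence $(\bar{\bu}_\ba,\bar{p}_\ba)$ satisfies the same homogeneous Biot equations off $\Gamma$ as the incident field does off $\mathcal{G}$, together with the outgoing radiation condition~\eqref{KS}. The well-posedness analysis summarized in Remark~\ref{R_Wellp} guarantees that~\eqref{up_a} is uniquely solvable, so the map $\ba \mapsto (\bu_\ba,p_\ba)|_\mathcal{G}$ is well-defined and bounded into $L^2(\mathcal{G})^3\times L^2(\mathcal{G})$.

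Next I would apply the poroelastodynamic reciprocity of Biot~\cite{chen1991,scha2012} to the pair $(\bu^\iota,p^\iota)$ and $(\bar{\bu}_\ba,\bar{p}_\ba)$ on the annular region $B_R\setminus(\overline{\mathcal{G}}\cup\overline{\Gamma})$, then let $R\to\infty$. Since both fields solve identical source-free equations in this region, the bulk terms cancel and only surface contributions over $\partial B_R$, $\mathcal{G}$, and $\Gamma$ remain. The $\partial B_R$ term vanishes in the limit because both fields are outgoing with $\mathfrak{I}(k_\varsigma)>0$, producing exponential decay. The $\mathcal{G}$-contribution, evaluated using the representation~\eqref{uf}, collapses to the sesquilinear inner product $(\bg,(\bu_\ba,p_\ba)|_\mathcal{G})_{L^2(\mathcal{G})}$. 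The $\Gamma$-contribution, by the jump prescription in~\eqref{up_a} and the definitions~\eqref{Neu},~\eqref{oH} of $(\bt^\iota,q^\iota,p^\iota)$, yields precisely $\int_\Gamma \bt^\iota\!\cdot\!\dbr{\bu_\ba} + q^\iota\dbr{p_\ba} - p^\iota\dbr{q_\ba} = \langle\mathcal{S}\bg,\ba\rangle_\Gamma$. Matching the two sides gives the adjoint identity.

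The main technical obstacle is the careful bookkeeping of the Green's-identity boundary terms on $\Gamma$ for the coupled Biot operator: one needs a poroelastic analog of the scalar identity $\int(\dbr{u}\langle t\rangle + \langle u\rangle\dbr{t})$ that simultaneously handles the elastic and pore-pressure equations and that arranges the three pieces of $\mathcal{S}\bg \in H^{-1/2}(\Gamma)^3\!\times\!H^{-1/2}(\Gamma)\!\times\!H^{1/2}(\Gamma)$ to pair in duality with the three pieces of $\ba = (\dbr{\bu_\ba},\dbr{p_\ba},-\dbr{q_\ba})$. The minus sign in front of $\dbr{q_\ba}$ in~\eqref{up_a} is dictated precisely by this identity; likewise, a parallel tracking of conjugations when moving from the sesquilinear $L^2(\mathcal{G})$ pairing to the bilinear $\Gamma$ pairing is required to recover $\mathcal{S}^*$ itself rather than its complex conjugate.
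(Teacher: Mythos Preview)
Your proposal is correct and follows essentially the same route as the paper: both arguments apply the poroelastodynamic reciprocity identity between $(\bu^\iota,p^\iota)$ and the conjugate of $(\bu_{\ba},p_{\ba})$, integrate by parts over the exterior of $\Gamma$ (the paper works directly on $\R^3\setminus\overline{\Gamma}$ rather than on $B_R$ with $R\to\infty$, but this is cosmetic), and then match the $\Gamma$-boundary terms against the $\mathcal{G}$-source terms once the identical volume contributions cancel. Your remarks on the role of the conjugation $\gamma\mapsto\bar\gamma$ and on the sign in $-\dbr{q_{\ba}}$ are exactly the points the paper's calculation makes explicit.
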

\begin{proof}
\textcolor{black}{see~\ref{H*pruf}}.
\end{proof}
On the basis of~\eqref{vinf2} and~(\ref{Hstar}), the operator $\mathcal{P}$ can be further decomposed as $\mathcal{P}=\bar{\mathcal{S}}^* T$ where the middle operator $T\colon H^{-1/2}(\Gamma)^3 \times H^{-1/2}(\Gamma) \times H^{1/2}(\Gamma) \rightarrow \tilde{H}^{1/2}(\Gamma)^3 \times \tilde{H}^{1/2}(\Gamma) \times \tilde{H}^{-1/2}(\Gamma)^3$ is given by
\beq\lb{T}
T(\bt\ff,q\ff,p\ff) ~:=~ \big( \dbr{\bu}, \dbr{\exs p \exs}, -\dbr{\exs q\exs} \big)   \qquad \text{on}\,\,\,  {\Gamma},
\eeq
such that~$(\bu,p) \in H^1_{\tiny\textrm{loc}}({\R^3}\nxs\setminus\nxs\overline{\Gamma})^3 \nxs\times\nxs H^1_{\tiny\textrm{loc}}({\R^3}\nxs\setminus\nxs\overline{\Gamma})$ satisfies~\eqref{vinf2} and $q$ is computed from~\eqref{Neu}. Thanks to this new decomposition of $\mathcal{P}$, a second factorization of $\Lambda \colon L^2(\OOd)^3 \times L^2(\OOd) \rightarrow L^2(\OOd)^3 \times L^2(\OOd)$ is obtained as 
\beq\lb{fact} 
\Lambda ~=~ \bar{\mathcal{S}}^* \exs T \exs \mathcal{S}. 
\eeq

It is worth noting that the asymmetry of the second factorization~\eqref{fact} stems from the non-selfadjoint nature of the Biot system which is evident from~\eqref{up_a}.    

\section{Key properties of the poroelastodynamic operators}\label{SFS}

\begin{lemma}\lb{H*p}
Operator $\mathcal{S}^* \colon \tilde{H}^{1/2}(\Gamma)^3 \nxs\times\nxs \tilde{H}^{1/2}(\Gamma) \nxs\times\nxs \tilde{H}^{-1/2}(\Gamma) \rightarrow L^2(\OOd)^3 \nxs\times\nxs L^2(\mathcal{G})$ in Lemma~\ref{H*} is compact, injective, and has a dense range.  
\end{lemma}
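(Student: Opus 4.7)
My plan is to establish compactness, injectivity, and denseness of the range of $\mathcal{S}^*$ in sequence. For compactness, I would leverage the integral representation~\eqref{vinf2}: under the transmission data prescribed in~\eqref{up_a}, the field $(\bu_{\ba},p_{\ba})$ admits a layer-potential representation over $\Gamma$ with density $\ba$ whose kernel is the conjugated poroelastodynamics fundamental solution. Since the sensing grid $\mathcal{G}$ is strictly disjoint from $\Gamma$, this kernel is a $C^{\infty}$ function of $(\by,\bxi)\in\Gamma\times\mathcal{G}$, and a direct Hilbert--Schmidt bound on the kernel delivers compactness of $\mathcal{S}^*$ into $L^2(\mathcal{G})^3\times L^2(\mathcal{G})$.

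For injectivity, I would assume $\mathcal{S}^*(\ba)=\bzero$ on $\mathcal{G}$ and invoke unique continuation for the conjugated Biot system: since $(\bu_{\ba},p_{\ba})$ radiates in $\R^3\!\setminus\!\overline{\Gamma}$ and vanishes on the positive-measure set $\mathcal{G}$, which lies in the connected complement of $\overline{\Gamma}$ (connected because $\Gamma$ is a two-dimensional Lipschitz surface embedded in $\R^3$), one obtains $(\bu_{\ba},p_{\ba})\equiv\bzero$ throughout $\R^3\!\setminus\!\overline{\Gamma}$. Both one-sided traces then vanish, and the jump prescriptions in~\eqref{up_a} force $\ba=\bzero$.

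For dense range, I would rely on the Hilbert-space identity $\overline{\textrm{Range}(\mathcal{S}^*)}=\textrm{Ker}(\mathcal{S})^{\perp}$, which reduces the claim to injectivity of $\mathcal{S}$. If $\mathcal{S}(\bg)=\bzero$, the incident field $(\bu\ff,p\ff)$ generated by $\bg$ via~\eqref{HW} has zero pressure, traction, and relative-flow traces on $\Gamma$; since the background Biot coefficients are constant across $\Gamma$ and $(\bu\ff,p\ff)$ is smooth there, these three vanishing traces constitute an admissible zero transmission datum in the sense of Remark~\ref{R_Wellp}. Extending $(\bu\ff,p\ff)$ by zero across $\Gamma$ therefore produces a weak solution of the homogeneous Biot system in a neighborhood of $\Gamma$, and unique continuation propagates the vanishing throughout $\R^3\!\setminus\!\overline{\mathcal{G}}$; the jump relation for the volume potential~\eqref{HW} across $\mathcal{G}$ finally yields $\bg=\bzero$.

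The main obstacle will be this density step: one must verify that the three vanishing traces $(\bt\ff,q\ff,p\ff)$ carry enough Cauchy-type information to legitimize the zero-extension and the subsequent unique continuation for the coupled, non-selfadjoint Biot system. This will hinge on a Green's identity tailored to the asymmetry introduced by $\gamma$ in~\eqref{gam} and on the admissibility hypotheses of Remark~\ref{R_Wellp} to ensure that the trivial-jump extension falls within the well-posed forward class.
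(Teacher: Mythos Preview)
Your compactness and injectivity arguments match the paper's: the smooth layer-potential kernel on the disjoint sets $\Gamma\times\mathcal{G}$ yields compactness, and unique continuation from $\mathcal{G}$ through the connected exterior of $\Gamma$ forces $\ba=\bzero$.

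The dense-range step, however, has a gap. The incident field $(\bu\ff,p\ff)$ is already a smooth Biot solution in a full neighborhood of $\Gamma$ (since $\Gamma\cap\mathcal{G}=\emptyset$), so there is nothing to ``extend by zero across $\Gamma$,'' and Remark~\ref{R_Wellp} --- which governs jump operators $\mathfrak{P}$ --- is irrelevant because $(\bu\ff,p\ff)$ has no jump there. More substantively, the three vanishing traces $(\bt\ff,q\ff,p\ff)=\bzero$ on the \emph{open} surface $\Gamma$ are not full Cauchy data for the coupled system: from $p\ff=0$ and $\bt\ff=\bzero$ you get $\bn\sip\bC\colon\!\nabla\bu\ff=\bzero$, while $q\ff=0$ yields only $\nabla p\ff\sip\bn=\rho_f\omega^2\,\bu\ff\sip\bn$; neither $\bu\ff$ nor $\partial_n p\ff$ vanishes separately, so a Holmgren-type argument is not available and your zero-extension cannot be justified.

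The paper closes this by a different mechanism. Each analytic piece $\Gamma_m\subset\Gamma$ is continued to a closed surface $\partial D_m$, and analyticity of $(\bu\ff,p\ff)$ away from $\mathcal{G}$ propagates the conditions $(\bn\sip\bC\colon\!\nabla\bu\ff,\,p\ff)=\bzero$ from $\Gamma_m$ to all of $\partial D_m$. One then faces an \emph{interior} boundary-value problem~\eqref{uiH} on $D_m$ whose eigenfrequencies are necessarily complex because $\gamma\in\mathbb{C}$; the real excitation frequency $\omega$ therefore cannot be one of them, forcing $(\bu\ff,p\ff)=\bzero$ in $D_m$ and, by unique continuation, in $\R^3$. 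Your argument would need either this analytic-continuation-plus-interior-eigenvalue device or an alternative completion of the Cauchy data that the stated traces do not supply.
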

\begin{proof} 
See~\ref{S*pruf}.
\end{proof}

\begin{lemma}\lb{I{T}>0}
Operator $T\colon H^{-1/2}(\Gamma)^3 \times H^{-1/2}(\Gamma) \times H^{1/2}(\Gamma) \rightarrow \tilde{H}^{1/2}(\Gamma)^3 \times \tilde{H}^{1/2}(\Gamma) \times \tilde{H}^{-1/2}(\Gamma)^3$ in~(\ref{T}) is bounded and satisfies
 \beq\lb{pos-IT}
 \Im \dualGA{\bpsi}{T\bpsi} <0,\, \quad \forall \exs \bpsi \in H^{-1/2}(\Gamma)^3 \times H^{-1/2}(\Gamma) \times H^{1/2}(\Gamma):~ \bpsi \neq \bzero.
 \eeq
\end{lemma}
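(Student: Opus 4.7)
Boundedness follows directly from the well-posedness of the forward problem in Remark~\ref{R_Wellp}. For any $\bpsi=(\bt\ff,q\ff,p\ff)\in H^{-1/2}(\Gamma)^3\times H^{-1/2}(\Gamma)\times H^{1/2}(\Gamma)$, the system~\eqref{GE}--\eqref{KS} admits a unique scattered field $(\bu,p)\in H^1_{\tiny\textrm{loc}}(\mathbb{R}^3\setminus\overline{\Gamma})^3\times H^1_{\tiny\textrm{loc}}(\mathbb{R}^3\setminus\overline{\Gamma})$ whose $H^1$ norm on any compact set is controlled by $\|\bpsi\|$, and the trace-and-jump map into $\tilde{H}^{1/2}(\Gamma)^3\times\tilde{H}^{1/2}(\Gamma)\times\tilde{H}^{-1/2}(\Gamma)$ is continuous by standard trace theory. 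Hence $T$ is bounded.

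The substantive content lies in the strict imaginary-part estimate. The plan is to apply a Betti--Green identity for the coupled Biot system to $(\bu,p)$ in the truncated domain $B_R\setminus\overline{\Gamma}$, where $B_R$ is a large ball enclosing $\Gamma$. I would test the momentum equation of~\eqref{GE} against $\bar{\bu}$ and the fluid equation against a suitably weighted $\bar{p}$ (absorbing the complex factor $1/(\gamma\omega^2)$), integrate by parts, and use $\dbr{\bt}=\bzero$ to collapse the $\Gamma$ contribution onto the duality pairing between the mean fields $(\bt,\llangle q\rrangle,\llangle p\rrangle)$ and the jumps $(\dbr{\bu},\dbr{p},-\dbr{q})$. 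The remainder decomposes into a volume integral carrying the complex coefficients $\gamma^{-1}$, $\rho-\rho_f^2/\gamma$ and $\alpha-\rho_f/\gamma$, plus a surface integral on $\partial B_R$. Upon taking imaginary parts, the bulk term consolidates into a nonnegative dissipation functional $\mathcal{D}(\bu,p)\geqslant 0$, whose positivity is inherited from $\Im(\gamma^{-1})=-1/(\omega\kappa|\gamma|^2)<0$ for finite permeability $\kappa$; the $\partial B_R$ integral vanishes as $R\to\infty$ by virtue of the exponentially damped radiation condition~\eqref{KS} with $a_\varsigma=\Im k_\varsigma\geqslant 0$.

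Finally, the contact law rewritten via~\eqref{frakP} gives $(\bt,\llangle q\rrangle,\llangle p\rrangle)=\mathfrak{P}\bphi-\bpsi$, where $\bphi:=T\bpsi$. Substituting into the imaginary part of the Betti--Green identity obtained above yields
\[
\Im\dualGA{\bpsi}{T\bpsi}\,=\,\Im\dualGA{\mathfrak{P}\bphi}{\bphi}\,-\,\mathcal{D}(\bu,p)\,\leqslant\,0,
\]
where both right-hand terms are nonpositive by~\eqref{ImP} and by $\mathcal{D}\geqslant 0$. To upgrade to strict negativity, assume equality: then $\mathcal{D}(\bu,p)=0$ forces $(\bu,p)\equiv\bzero$ in $\mathbb{R}^3\setminus\overline{\Gamma}$ by strict dissipativity of the viscous Biot medium, hence $\bphi=\bzero$, and the contact law in turn delivers $\bpsi=\bzero$, contradicting $\bpsi\neq\bzero$. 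The principal obstacle is the asymmetric bookkeeping in the Betti--Green step: the non-selfadjointness of the Biot operator (already visible in the adjoint problem~\eqref{up_a} of Lemma~\ref{H*}) generates cross terms of the form $(\alpha-\rho_f/\gamma)\,p\,\nabla\!\cdot\!\bar{\bu}$ that must be combined with their conjugate partners (and with the Darcy identity relating $q$ to $\nabla p\cdot\bn$ and $\bu\cdot\bn$) so that, after taking the imaginary part, the bulk piece collapses into a clean, definite-sign dissipation form for $\mathcal{D}$.
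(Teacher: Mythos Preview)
Your proposal is correct and follows essentially the same route as the paper: boundedness from well-posedness plus trace theory, then a Betti--Green (reciprocity) identity on $B_R\setminus\overline{\Gamma}$ which, after letting $R\to\infty$ and taking imaginary parts, produces exactly the paper's formula~\eqref{T-bound}, namely $\Im\dualGA{\bpsi}{T\bpsi}=-\tfrac{\omega}{\kappa}\norms{\bq}^2_{L^2(\R^3\setminus\Gamma)}+\Im\dualGA{\mathfrak{P}\bphi}{\bphi}$, so your abstract dissipation $\mathcal{D}$ is precisely the Darcy loss $\tfrac{\omega}{\kappa}\norms{\bq}^2$. Your treatment of strict negativity (if equality, then $\bq=0$, hence $(\bu,p)=0$ via the decoupled Navier equation with exponentially decaying radiation as in the well-posedness argument of~\ref{Wellp}, hence $\bphi=0$, hence $\bpsi=0$ by the contact law) is in fact more explicit than the paper, which simply invokes~\eqref{ImP} and leaves that step to the reader.
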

\begin{proof}
See~\ref{I{T}>0p}.
\end{proof}

\begin{lemma}\lb{T-invs0}
Operator $T\colon H^{-1/2}(\Gamma)^3 \times H^{-1/2}(\Gamma) \times H^{1/2}(\Gamma) \rightarrow \tilde{H}^{1/2}(\Gamma)^3 \times \tilde{H}^{1/2}(\Gamma) \times \tilde{H}^{-1/2}(\Gamma)^3$:~(i)~has a bounded (and thus continuous) inverse, and~(ii)~is coercive, i.e., there exists constant $c\!>\!0$ independent of~$\bpsi$ such that
\beq\lb{co-T0}
|\langle \bpsi, \, T (\bpsi) \rangle| \,\,\geqslant\,\, \textrm{c} \nxs \norms{\bpsi}_{H^{-1/2}(\Gamma)^3 \times H^{-1/2}(\Gamma) \times H^{1/2}(\Gamma)}^2, \quad  \forall\bpsi\in H^{-1/2}(\Gamma)^3 \times H^{-1/2}(\Gamma) \times H^{1/2}(\Gamma).
\eeq 
\end{lemma}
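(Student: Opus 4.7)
The plan is to handle the two conclusions in tandem: first establish that $T$ is Fredholm of index zero so that the injectivity already supplied by Lemma~\ref{I{T}>0} upgrades to invertibility, yielding~(i); then promote the strict sign condition of Lemma~\ref{I{T}>0} to the quantitative estimate~\eqref{co-T0} by a G\aa rding-plus-compactness argument.

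For~(i), I would write $T = T_{\flat} + T_{\sharp}$, where $T_{\flat}$ is a principal part of the transmission problem associated with~\eqref{GE}--\eqref{KS} whose jump-based variational formulation (\emph{cf.}~Remark~\ref{R_Wellp}) is coercive in the product duality inherited from the embeddings~\eqref{embb}. Coercivity of $T_{\flat}$ would follow from Lax--Milgram after grouping the dissipative contributions encoded in the imaginary part of $\gamma$, see~\eqref{gam}, together with the interface stiffness packaged in $\mathfrak{P}$, cf.~\eqref{frakP}. The remainder $T_{\sharp}$ would collect the lower-order solid--fluid couplings, the radiating tail at large $|\bxi|$, and curvature contributions on $\Gamma$; its compactness would follow from the Rellich chain~\eqref{embb} combined with interior regularity for the Biot system. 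This renders $T$ Fredholm of index zero; combined with $\ker T = \{\bzero\}$ (immediate from Lemma~\ref{I{T}>0} since $T\bpsi = \bzero$ forces $\Im\langle\bpsi,T\bpsi\rangle_{\Gamma} = 0$), one obtains bijectivity, and the open mapping theorem supplies the bounded inverse claimed in~(i).

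For~(ii) I would argue by contradiction: if no $\textrm{c}$ satisfying~\eqref{co-T0} exists, there is a sequence $\bpsi_n$ with $\norms{\bpsi_n} = 1$ and $\langle \bpsi_n, T\bpsi_n\rangle_{\Gamma} \to 0$. The decomposition above would yield a G\aa rding-type inequality
\[
\norms{\bpsi}^2 \,\leqslant\, C\,\big|\langle\bpsi, T\bpsi\rangle_{\Gamma}\big| \,+\, C\,\|K\bpsi\|_{\star}^2,
\]
for some compact operator $K$ into a weaker norm $\|\cdot\|_{\star}$. Extracting a weakly convergent subsequence $\bpsi_n \rightharpoonup \bpsi_{\ast}$, compactness of $K$ delivers strong convergence $K\bpsi_n \to K\bpsi_{\ast}$; passing to the limit in the pairing gives $\langle\bpsi_{\ast},T\bpsi_{\ast}\rangle_{\Gamma} = 0$, whence $\bpsi_{\ast} = \bzero$ by Lemma~\ref{I{T}>0}, and the G\aa rding remainder then drives $\norms{\bpsi_n} \to 0$, contradicting $\norms{\bpsi_n} = 1$.

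The principal difficulty lies in securing the decomposition $T = T_{\flat} + T_{\sharp}$ itself: the Biot system is non-selfadjoint (as emphasised around~\eqref{fact}), the domain and codomain of $T$ mix $H^{\pm 1/2}(\Gamma)$ with $\tilde H^{\pm 1/2}(\Gamma)$ factors on an \emph{open} relative surface $\Gamma \subset \partial \text{\sf D}$, and $T$ tacitly encodes the exterior radiation condition~\eqref{KS}. Producing a clean G\aa rding lower bound for $T_{\flat}$ compatible with the product duality pairing, while simultaneously certifying genuine compactness of $T_{\sharp}$ near $\partial\Gamma$ where $\tilde H$-regularity deteriorates, is where the essential work would reside; once this decomposition is in hand, both~(i) and~(ii) follow along the lines sketched above.
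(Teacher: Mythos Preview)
Your high-level architecture matches the paper's: Fredholm of index zero plus injectivity for~(i), and a contradiction via a unit-norm sequence $\bpsi_n$ with $\langle\bpsi_n,T\bpsi_n\rangle\to 0$ for~(ii). The implementations, however, diverge in ways worth noting.

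For~(i), the paper does not construct a coercive-plus-compact splitting $T=T_\flat+T_\sharp$; instead it obtains the Fredholm property by invoking the layer-potential calculus for the poroelastodynamic fundamental solution (citing an external lemma of Fioralba), and it proves injectivity directly: $T\bpsi=\bzero$ means the jump vector vanishes, so by the representation formula~\eqref{vinf2} the scattered field $(\bu,p)$ vanishes in $\R^3\setminus\overline{\Gamma}$, whence $(\bt,\llangle q\rrangle)=\bzero$ on $\Gamma$, and the contact conditions in~\eqref{GE} then force $\bpsi=\bzero$. Your injectivity route via Lemma~\ref{I{T}>0} is shorter and equally valid.

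For~(ii), the paper's argument is considerably more concrete and PDE-specific than your abstract G\aa rding scheme. From the identity~\eqref{T-bound} and the hypothesis~\eqref{ImP} it extracts that $\bq_n\to\bzero$ \emph{strongly} in $L^2(\R^3\setminus\overline{\Gamma})^3$; well-posedness bounds keep the jump triples bounded, and compactness of $\bar{\mathcal S}^*$ upgrades $(\bu_n,p_n)$ to strong convergence. The key step is then to observe that the limiting $\bu$ satisfies the \emph{conservative} Navier equation yet is subject to the \emph{exponentially decaying} poroelastic radiation pattern~\eqref{KS}, which forces $\bu=\bzero$ (and then $p=0$ via~\eqref{qnqn}). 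Finally, the identity~\eqref{Nbpsi} together with trace continuity shows $\|\bpsi_n\|\to 0$. This bypasses entirely the need to build a G\aa rding decomposition on the open surface $\Gamma$, which you correctly flag as the hard part of your plan.

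One caution on your sketch: the line ``passing to the limit in the pairing gives $\langle\bpsi_\ast,T\bpsi_\ast\rangle_\Gamma=0$'' is not immediate from weak convergence of $\bpsi_n$ alone, since both slots of the pairing move only weakly; you would need either strong convergence in one slot (via the compact piece $T_\sharp$) or a weak-lower-semicontinuity property of the coercive piece to justify it. The paper avoids this issue altogether by working with the physical fields rather than the abstract pairing.
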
 
\begin{proof}
See~\ref{T-invs0p}.
\end{proof}

\begin{lemma}\lb{comp_G}
Operator $\mathcal{P} = \bar{\mathcal{S}}^* T \,\colon\nxs H^{-1/2}(\Gamma)^3 \nxs\times\nxs H^{-1/2}(\Gamma) \nxs\times\nxs H^{1/2}(\Gamma) \rightarrow L^2(\OOd)^3\times L^2(\mathcal{G})$ is compact over $H^{-1/2}(\Gamma)^3 \nxs\times\nxs H^{-1/2}(\Gamma) \nxs\times\nxs H^{1/2}(\Gamma)$. 
\end{lemma}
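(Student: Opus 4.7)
The plan is to obtain compactness of $\mathcal{P}$ as an immediate consequence of the factorization $\mathcal{P}=\bar{\mathcal{S}}^{*}T$ together with the results already proved in Lemmas~\ref{H*p} and~\ref{I{T}>0}. Concretely, I would view $\mathcal{P}$ as a composition
\[
H^{-1/2}(\Gamma)^3\times H^{-1/2}(\Gamma)\times H^{1/2}(\Gamma) \,\xrightarrow{\,T\,}\, \tilde{H}^{1/2}(\Gamma)^3\times \tilde{H}^{1/2}(\Gamma)\times \tilde{H}^{-1/2}(\Gamma) \,\xrightarrow{\,\bar{\mathcal{S}}^{*}\,}\, L^2(\mathcal{G})^3\times L^2(\mathcal{G}),
\]
so that the domain/codomain of the first map matches the domain of the second.

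First I would invoke Lemma~\ref{I{T}>0}, which asserts that $T$ is bounded between the indicated spaces. Next I would invoke Lemma~\ref{H*p}, which states that $\mathcal{S}^{*}$ is compact from $\tilde{H}^{1/2}(\Gamma)^3\times\tilde{H}^{1/2}(\Gamma)\times\tilde{H}^{-1/2}(\Gamma)$ into $L^2(\mathcal{G})^3\times L^2(\mathcal{G})$. The only minor subtlety is that $\mathcal{P}$ involves $\bar{\mathcal{S}}^{*}$ rather than $\mathcal{S}^{*}$; I would note that complex conjugation is an isometric (anti-)isomorphism of the underlying function spaces, so it preserves total boundedness of the image of the unit ball and hence compactness of the operator. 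Concretely, if $\{\bpsi_k\}$ is a bounded sequence, apply compactness of $\mathcal{S}^{*}$ to $\{\overline{\bpsi_k}\}$ to extract a norm-convergent subsequence of $\{\mathcal{S}^{*}\overline{\bpsi_k}\}$, then conjugate back to get convergence of $\{\bar{\mathcal{S}}^{*}\bpsi_k\}$.

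With these two ingredients in hand, the conclusion follows from the standard fact that the composition of a bounded operator with a compact one is compact: for any bounded sequence $\{\bpsi_k\}\subset H^{-1/2}(\Gamma)^3\times H^{-1/2}(\Gamma)\times H^{1/2}(\Gamma)$, the images $\{T\bpsi_k\}$ form a bounded sequence in $\tilde{H}^{1/2}(\Gamma)^3\times\tilde{H}^{1/2}(\Gamma)\times\tilde{H}^{-1/2}(\Gamma)$ by continuity of $T$, and then compactness of $\bar{\mathcal{S}}^{*}$ extracts a convergent subsequence of $\{\bar{\mathcal{S}}^{*}T\bpsi_k\}=\{\mathcal{P}\bpsi_k\}$ in $L^2(\mathcal{G})^3\times L^2(\mathcal{G})$. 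There is no genuine obstacle here; the entire statement is a bookkeeping consequence of the earlier lemmas, with the only item worth flagging being the compatibility of the intermediate trace spaces (whose matching is exactly what allowed the factorization $\mathcal{P}=\bar{\mathcal{S}}^{*}T$ to be introduced just before~\eqref{fact}).
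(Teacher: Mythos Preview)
Your proposal is correct and follows essentially the same approach as the paper, which simply notes that the claim follows immediately from Lemmas~\ref{H*p} and~\ref{I{T}>0} establishing, respectively, the compactness of $\mathcal{S}^*$ and the boundedness of $T$. Your additional remark that complex conjugation preserves compactness (so that $\bar{\mathcal{S}}^*$ inherits compactness from $\mathcal{S}^*$) makes explicit a step the paper leaves implicit.
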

\begin{proof}
The claim follows immediately from Lemmas~\ref{H*p} and~\ref{I{T}>0} establishing, respectively, the compactness of $\mathcal{S}^*$ and the boundedness of $T$.
 \end{proof}
 
 \begin{lemma}\lb{FF_op}
The poroelastic scattering operator $\Lambda: L^2(\OOd)^3 \times L^2(\mathcal{G}) \to L^2(\OOd)^3 \times L^2(\mathcal{G})$ is injective, compact and has a dense range.  
\end{lemma}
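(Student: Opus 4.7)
The plan is to combine the two factorizations
\begin{equation}\nonumber
\Lambda \,=\, \mathcal{P}\mathcal{S} \,=\, \bar{\mathcal{S}}^* \exs T \exs \mathcal{S}
\end{equation}
from \eqref{fac1} and \eqref{fact} with the mapping properties of $\mathcal{S}^*$, $T$, and $\mathcal{P}$ that have already been secured in the preceding lemmas. \emph{Compactness} then follows most directly from the first factorization: $\mathcal{P}$ is compact by Lemma~\ref{comp_G}, and $\mathcal{S}$ is bounded since it is the Hilbert-space adjoint of the compact operator $\mathcal{S}^*$ from Lemma~\ref{H*p}; the composition of a compact and a bounded operator is compact.

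For \emph{injectivity}, I would use the second factorization and peel off one operator at a time. Assume $\Lambda\bg = \bzero$. The operator $\bar{\mathcal{S}}^*$ has the same mapping properties as $\mathcal{S}^*$ because it is associated with the conjugated Biot system (the coefficient $\gamma$ replaced by $\bar{\gamma}$), which is well-posed under the same admissibility conditions identified in Remark~\ref{R_Wellp}; hence $\bar{\mathcal{S}}^*$ is injective by the analogue of Lemma~\ref{H*p}, giving $T\mathcal{S}\bg = \bzero$. Applying the bounded inverse supplied by Lemma~\ref{T-invs0} yields $\mathcal{S}\bg = \bzero$. Finally, $\mathcal{S}$ is injective because its adjoint $\mathcal{S}^*$ has dense range (Lemma~\ref{H*p}), so $\bg = \bzero$.

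For \emph{dense range}, I would argue via the adjoint, recalling that $\Lambda$ has dense range if and only if $\Lambda^*$ is injective. Taking adjoints of \eqref{fact} and tracking the duality pairings between the Sobolev trace spaces on $\Gamma$ gives
\begin{equation}\nonumber
\Lambda^* \,=\, \mathcal{S}^* \exs T^* \exs \bar{\mathcal{S}},
\end{equation}
where $\bar{\mathcal{S}}$ denotes the analogue of $\mathcal{S}$ associated with the conjugated system. Each factor is injective: $\mathcal{S}^*$ directly from Lemma~\ref{H*p}; $T^*$ because $T$ admits a bounded inverse by Lemma~\ref{T-invs0}, so $T^*$ is itself boundedly invertible; and $\bar{\mathcal{S}}$ because the conjugate analogue of Lemma~\ref{H*p} gives $\bar{\mathcal{S}}^*$ dense range. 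The composition of three injective maps is injective, so $\Lambda^*$ is injective and $\Lambda$ has dense range.

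The main obstacle I expect is the bookkeeping in the \emph{dense-range} step: one must correctly identify the adjoint $\Lambda^*$ in the mixed setting where the exterior factors act between $L^2(\mathcal{G})^3\nxs\times\nxs L^2(\mathcal{G})$ and the dual pair $\big(H^{-1/2}\nxs\times\nxs H^{-1/2}\nxs\times\nxs H^{1/2},\,\tilde{H}^{1/2}\nxs\times\nxs \tilde{H}^{1/2}\nxs\times\nxs \tilde{H}^{-1/2}\big)$ on $\Gamma$, and then confirm that transposing all operators preserves the compositional structure. This reduces to observing that the conjugated Biot problem inherits well-posedness from the admissibility conditions of Section~\ref{Prelim}, so Lemmas~\ref{H*p}--\ref{T-invs0} apply verbatim to $\bar{\mathcal{S}}$, $\bar{\mathcal{S}}^*$, and $T^*$.
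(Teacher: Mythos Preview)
Your compactness and injectivity arguments match the paper's (the paper phrases compactness via the second factorization---compactness of $\mathcal{S}^*$ and boundedness of $T$---but that is equivalent to what you wrote).

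For \emph{dense range} you take a genuinely different route. The paper does not pass through the adjoint of the second factorization; instead it writes $\Lambda^* = \mathcal{S}^*\mathcal{P}^*$ from the first factorization and then \emph{explicitly characterizes} $\mathcal{P}^*$ as the map $\bg\mapsto (\dbr{\bu^\star},\dbr{p^\star},-\dbr{q^\star})$ where $(\bu^\star,p^\star)$ solves a conjugated Biot transmission problem on $\R^3\setminus(\overline{\Gamma}\cup\overline{\mathcal{G}})$ with the \emph{adjoint} interface operator $\bar{\mathfrak{P}}^{\textrm{T}}$ on $\Gamma$. Injectivity of $\mathcal{P}^*$ is then obtained by a direct PDE argument: if the jumps vanish, the field is continuous across $\Gamma$, the traces $(\bt^\star,q^\star,p^\star)$ on $\Gamma$ vanish, and one concludes $(\bu^\star,p^\star)=\bzero$ and $\bg=\bzero$ by the same unique-continuation mechanism used for Lemma~\ref{H*p}.

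Your algebraic approach---factor $\Lambda^*=\mathcal{S}^*T^*\bar{\mathcal{S}}$ and appeal to invertibility of $T^*$ and injectivity of $\bar{\mathcal{S}}$---is shorter and avoids deriving a new boundary value problem, but it leans on the invertibility of $T$ (Lemma~\ref{T-invs0}) and on a ``conjugate analogue'' of Lemma~\ref{H*p} that you state but do not prove. The paper's route is more self-contained and has the side benefit of exhibiting the PDE structure of $\mathcal{P}^*$, at the cost of one more reciprocity computation. Either is acceptable; your approach is sound provided you make explicit that the conjugated incident problem (with $\gamma$ replaced by $\bar{\gamma}$) is well-posed and that the injectivity argument for $\mathcal{S}$ in Appendix~\ref{S*pruf} transfers verbatim to $\bar{\mathcal{S}}$.
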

\begin{proof}
See~\ref{FF_opp}.
\end{proof}

\vspace{-2mm}
\section{Inverse poroelastic scattering} \label{SSA}

This section presents an adaptation of the key results from sampling approaches to inverse scattering for the problem of poroelastic-wave imaging of finitely permeable interfaces. The fundamental idea stems from the nature of solution $\bg = (\bg_s,g_f) \in L^2(\mathcal{G})^3\times L^2(\mathcal{G})$ to the poroelastic scattering equation 
\beq\lb{FF}
\Lambda \bg ~=~ \bPhi_{\text{\tiny L}}, \qquad \Lambda ~=~ \mathcal{P} \mathcal{S} ~=~ \bar{\mathcal{S}}^* \exs T \exs \mathcal{S}, 
\eeq
where $\bPhi_{\text{\tiny L}}$ is the near-field pattern of a trial poroelastodynamic field specified by Definition~\ref{phi-infinity}.   

\begin{defn} \lb{phi-infinity}
With reference to~\eqref{vinf2}, for every admissible density $\ba\!\in\!\tilde{H}^{1/2}(L)^3 \nxs\times\nxs \tilde{H}^{1/2}(L)\nxs\times\nxs \tilde{H}^{-1/2}(L)$ specified over a smooth, non-intersecting trial interface $L\!\subset\! \mathcal{B}\!\subset\!\R^3$, the induced near-field pattern $\bPhi_{\text{\tiny L}} \colon \tilde{H}^{1/2}(L)^3 \nxs\times\nxs \tilde{H}^{1/2}(L)\nxs\times\nxs \tilde{H}^{-1/2}(L) \rightarrow L^2(\mathcal{G})^3\times L^2(\mathcal{G})$ is given by
\beq\lb{Phi-inf}
\begin{aligned}
\bPhi_{\text{\tiny \emph{L}}}(\ba)(\bxi) \,:=\,  (\bu_{\xxs\text{\tiny \emph{L}}},p_{\xxs\text{\tiny \emph{L}}})(\bxi) \,= \int_L \, \bSig(\by,\bxi) \sip \ba(\by) \,\, \text{d}S_{\by}, \quad \bSig \,=\, \text{\emph{$\left[ \!\! \begin{array}{lll}
\\*[-5.85mm]
\bTsfr \!\! &\!\! \bqsfr \!\! &\! \bpsfr \\*[0mm]
 \btffr \!\! &\!\! \qffr \!\! &\!  \pffr
\end{array} \!\!\nxs \right]$}}, \quad \bxi \in \mathcal{G}.  
\end{aligned}
\eeq
\end{defn}
\begin{rem}\lb{LSMrem}
\textcolor{black}{In light of~\eqref{FF}, one may interpret the philosophy of sampling-based waveform tomography as the following. Let ${\sf L}\!\subset\!\mathcal{B}$ (containing the origin) denote a  poroelastic discontinuity whose characteristic size is small relative to the length scales describing the forward scattering problem, and let $L=\bz\!+\bR{\sf L}$ where $\bz\!\in\mathcal{B}$ and $\bR\!\in\!U(3)$ is a unitary rotation matrix. Given a density $\ba\!\in\!\tilde{H}^{1/2}(L)^3 \nxs\times\nxs \tilde{H}^{1/2}(L)\nxs\times\nxs \tilde{H}^{-1/2}(L)$, solving the scattering equation~\eqref{FF} over a grid of trial pairs $(\bz,\bR)$ sampling $\mathcal{B}\times U(3)$ is simply an effort to probe the range of operator $\Lambda$ (or that of $\bar{\mathcal{S}}^*$), through synthetic reshaping of the  illuminating wavefront, for fingerprints in terms of~$\bPhi_{\text{\tiny \emph{L}}}$. As shown by Theorems~\ref{TR2}, such fingerprint is found in the data if and only if~$L\subset\Gamma$. Otherwise, the norm of any approximate solution to~(\ref{FF}) can be made arbitrarily large, which provides a criterion for reconstructing~$\Gamma$.}          
\end{rem}

\begin{theorem}\lb{TR1} 
Under the assumptions of~Remark~\ref{R_Wellp} for the wellposedness of the forward scattering problem, for \emph{every} smooth and non-intersecting trial dislocation $L\!\subset\! \mathcal{B} \!\subset\! \R^3$ and density profile~$\,\ba(\bxi)\!\in\!\tilde{H}^{1/2}(L)^3 \nxs\times\nxs \tilde{H}^{1/2}(L)\nxs\times\nxs \tilde{H}^{-1/2}(L)$, one has
\[
\bPhi_{\text{\tiny L}} \in Range(\bar{\mathcal{S}}^*) ~~ \iff ~~ L \subset \Gamma.
\]   
\end{theorem}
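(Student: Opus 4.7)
\emph{Plan overview.} The proof rests on the fact that $\bar{\mathcal{S}}^{*}$, which appears in the factorization $\Lambda = \bar{\mathcal{S}}^{*} T \mathcal{S}$, admits precisely the layer-potential representation~(\ref{vinf2}) with the same kernel $\bSig$ that is used to build $\bPhi_L$ in~(\ref{Phi-inf}); explicitly, for $\bphi\in\tilde{H}^{1/2}(\Gamma)^3\nxs\times\nxs\tilde{H}^{1/2}(\Gamma)\nxs\times\nxs\tilde{H}^{-1/2}(\Gamma)$ one has $\bar{\mathcal{S}}^{*}(\bphi)(\bxi) = \int_\Gamma\bSig(\by,\bxi)\sip\bphi(\by)\,dS_\by$ for $\bxi\in\mathcal{G}$. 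For the easy implication $L\subset\Gamma \Rightarrow \bPhi_L\in\mathrm{Range}(\bar{\mathcal{S}}^{*})$, the plan is to extend $\ba$ by zero off $L$ to a density $\tilde{\ba}$ on $\Gamma$: the defining property of the spaces $\tilde{H}^{\pm 1/2}(L)$ (functions whose zero extension to an enclosing Lipschitz surface retains the Sobolev regularity) guarantees $\tilde{\ba}\in\tilde{H}^{1/2}(\Gamma)^3\nxs\times\nxs\tilde{H}^{1/2}(\Gamma)\nxs\times\nxs\tilde{H}^{-1/2}(\Gamma)$, whereupon the integral over $\Gamma$ collapses to one over $L$ and yields $\bar{\mathcal{S}}^{*}(\tilde{\ba}) = \bPhi_L(\ba)$ on $\mathcal{G}$.

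\emph{Converse.} For the nontrivial implication, I argue by contrapositive. Assume $L\not\subset\Gamma$, taking $\ba$ nontrivial on $L\setminus\overline\Gamma$ (the only case requiring attention, since the statement is vacuous otherwise), and suppose toward a contradiction that $\bar{\mathcal{S}}^{*}(\bphi) = \bPhi_L(\ba)$ on $\mathcal{G}$ for some admissible $\bphi$. Via their layer-potential representations, both fields extend to radiating Biot solutions -- $\bar{\mathcal{S}}^{*}(\bphi)$ in $\R^3\nxs\setminus\nxs\Gamma$ and $\bPhi_L(\ba)$ in $\R^3\nxs\setminus\nxs L$ -- so the difference $U$ solves the Biot equations in $\R^3\nxs\setminus\nxs(\Gamma\cup L)$, satisfies the radiation conditions~(\ref{KS}), and vanishes on $\mathcal{G}$. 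Uniqueness for the exterior Biot scattering problem (which follows from the wellposedness analysis underlying Remark~\ref{R_Wellp}) together with real-analyticity of Biot solutions off their singular set then forces $U\equiv 0$ throughout the connected region $\R^3\nxs\setminus\nxs(\Gamma\cup L)$. Pick any $\bxi_0\in L\setminus\overline\Gamma$ in the essential support of $\ba$: in a neighborhood of $\bxi_0$, $\bar{\mathcal{S}}^{*}(\bphi)$ is smooth (its singular set $\Gamma$ is a positive distance away), whereas $\bPhi_L(\ba)$ exhibits the prescribed jumps $(\dbr{\bu},\dbr{p},-\dbr{q})=\ba$ across $L$ by the standard jump relations for the Biot layer potential. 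The identity $U\equiv 0$ then forces these jumps to vanish, contradicting the non-triviality of $\ba$ near $\bxi_0$.

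\emph{Main obstacle.} The delicate step is the unique-continuation / exterior-uniqueness argument used to upgrade $U|_\mathcal{G} = 0$ to $U\equiv 0$ in $\R^3\nxs\setminus\nxs(\Gamma\cup L)$. Because the Biot system is non-selfadjoint -- the very reason $\Lambda = \bar{\mathcal{S}}^{*} T \mathcal{S}$ is asymmetric -- the classical Rellich lemma does not apply off the shelf; the argument has to combine the sign condition~(\ref{ImP}) on $\mathfrak{P}$, an energy identity on a surface enclosing $\mathcal{G}$, the radiation decay~(\ref{KS}), and ellipticity of the coupled elastic-acoustic principal symbol, along the lines of the wellposedness proof behind Remark~\ref{R_Wellp}. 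A secondary nontrivial point is verifying that the mixed-regularity kernel $\bSig$ produces layer-potential jump relations acting componentwise as $(\dbr{\bu},\dbr{p},-\dbr{q})$ across $L$, which demands a Kupradze-style decomposition of~(\ref{vinf2}) into its elastic and pore-pressure constituents with their respective jump identities.
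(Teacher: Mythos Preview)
Your proposal is correct and follows the standard layer-potential\,/\,unique-continuation\,/\,jump-relation strategy that the paper defers to via its citation of \cite[Theorem~6.1]{pour2017}; the paper gives no independent argument here, and your sketch is precisely the route that reference takes (zero extension of~$\ba$ for the sufficiency, and the singularity mismatch across $L\setminus\overline\Gamma$ for the necessity). One small refinement: in the converse you invoke ``uniqueness for the exterior Biot scattering problem'' to propagate $U|_\mathcal{G}=0$ to $U\equiv 0$, but what is actually used---both in your own parenthetical and in the paper's Lemma~\ref{H*p}---is straight unique continuation (real-analyticity of solutions to the constant-coefficient elliptic Biot system) from the open surface $\mathcal{G}$ through the connected exterior $\R^3\setminus(\overline\Gamma\cup\overline L)$; no separate exterior-uniqueness statement or Rellich-type identity is needed at this step.
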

\begin{proof}
The argument directly follows that of~\cite[Theorem 6.1]{pour2017}.
\end{proof}

On the basis of Theorem~\ref{TR1}, one arrives at the following statement which inspires the sampling-based poroelastic imaging indicators.    
\begin{theorem}\lb{TR2}
Under the assumptions of~Theorem~\ref{TR1},
\begin{itemize}
\item~If $L \subset \Gamma$, there exists a density vector $\bg_\epsilon^L\!\in L^2(\OOd)^3\nxs\times\nxs L^2(\OOd)$ such that $\|\Lambda\exs\bg_\epsilon^L-\bPhi_{\text{\tiny L}}\|_{L^2(\OOd)^3\nxs\times\nxs L^2(\OOd)} \leqslant\epsilon$ and
\\
$\limsup\limits_{\epsilon \rightarrow 0} \|\mathcal{S}\bg_\epsilon^L\|_{H^{-1/2}(\Gamma)^3 \times H^{-1/2}(\Gamma) \times H^{1/2}(\Gamma)}<\infty$.

\item~If $L \not\subset \Gamma$, then $\forall \bg_\epsilon^L\!\in L^2(\OOd)^3\nxs\times\nxs L^2(\OOd)$ such that $\norms{\nxs \Lambda\exs\bg_\epsilon^L-\bPhi_{\text{\tiny L}}  \nxs}_{L^2(\OOd)^3\nxs\times\nxs L^2(\OOd)} \, \leqslant\epsilon$, one has 
\\
$\lim\limits_{\epsilon \rightarrow 0} \norms{\mathcal{S}\bg_\epsilon^L}_{H^{-1/2}(\Gamma)^3 \times H^{-1/2}(\Gamma) \times H^{1/2}(\Gamma)} \,\,=\infty$.
\end{itemize}
\end{theorem}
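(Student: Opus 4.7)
The plan is to exploit the second factorization $\Lambda = \bar{\mathcal{S}}^* T \mathcal{S}$ in combination with Theorem~\ref{TR1}, which pins down Range$(\bar{\mathcal{S}}^*)$ via the geometric criterion $L\subset\Gamma$. The ingredients already in hand---boundedness and invertibility of $T$ (Lemma~\ref{T-invs0}), compactness of $\bar{\mathcal{S}}^*$ and injectivity of $\mathcal{S}^*$ (Lemma~\ref{H*p}, which by Banach-space duality also yields density of Range$(\mathcal{S})$ in $H^{-1/2}(\Gamma)^3 \times H^{-1/2}(\Gamma) \times H^{1/2}(\Gamma)$)---will carry both bullets.

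For the first bullet, assume $L\subset\Gamma$. Theorem~\ref{TR1} supplies $\bphi_\star \in \tilde H^{1/2}(\Gamma)^3 \times \tilde H^{1/2}(\Gamma) \times \tilde H^{-1/2}(\Gamma)$ with $\bar{\mathcal{S}}^*\bphi_\star = \bPhi_{\text{\tiny L}}$; setting $\bpsi_\star := T^{-1}\bphi_\star$ via Lemma~\ref{T-invs0} produces an element of the middle space that satisfies $\bar{\mathcal{S}}^* T \bpsi_\star = \bPhi_{\text{\tiny L}}$. Density of Range$(\mathcal{S})$ then yields $\bg_\epsilon^L\in L^2(\mathcal{G})^3\times L^2(\mathcal{G})$ with $\|\mathcal{S}\bg_\epsilon^L - \bpsi_\star\|\leq \epsilon/(1+\|\bar{\mathcal{S}}^* T\|)$, so that $\|\Lambda\bg_\epsilon^L - \bPhi_{\text{\tiny L}}\|\leq\epsilon$ and, by the triangle inequality, $\|\mathcal{S}\bg_\epsilon^L\|\leq \|\bpsi_\star\| + O(\epsilon)$ remains bounded as $\epsilon\downarrow 0$, which is exactly the required $\limsup$ bound.

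For the second bullet, assume $L\not\subset\Gamma$, so that $\bPhi_{\text{\tiny L}}\notin$ Range$(\bar{\mathcal{S}}^*)$ by Theorem~\ref{TR1}. I would argue by contradiction: suppose some sequence $\bg_n := \bg_{\epsilon_n}^L$ with $\epsilon_n\downarrow 0$ satisfies $\|\Lambda\bg_n - \bPhi_{\text{\tiny L}}\|\leq \epsilon_n$ while $\{\mathcal{S}\bg_n\}$ stays bounded in the reflexive Hilbert space $H^{-1/2}(\Gamma)^3 \times H^{-1/2}(\Gamma) \times H^{1/2}(\Gamma)$. Extract a weakly convergent subsequence $\mathcal{S}\bg_n\rightharpoonup \bpsi_\infty$; boundedness of $T$ gives $T\mathcal{S}\bg_n\rightharpoonup T\bpsi_\infty$, and compactness of $\bar{\mathcal{S}}^*$ upgrades this to norm convergence $\Lambda\bg_n = \bar{\mathcal{S}}^* T \mathcal{S}\bg_n \to \bar{\mathcal{S}}^*(T\bpsi_\infty)$ in $L^2(\mathcal{G})^3\times L^2(\mathcal{G})$. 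The defining approximation property then forces $\bPhi_{\text{\tiny L}} = \bar{\mathcal{S}}^*(T\bpsi_\infty)\in$ Range$(\bar{\mathcal{S}}^*)$, a contradiction.

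The main obstacle I anticipate is the careful bookkeeping of duality pairings in the fractional Sobolev product on $\Gamma$: one has to verify that $\mathcal{S}^*$ from Lemma~\ref{H*} is indeed the Banach-space adjoint of $\mathcal{S}$ with respect to the pairing $\langle\cdot,\cdot\rangle_\Gamma$ of Remark~\ref{R_Wellp}, so that the injectivity asserted in Lemma~\ref{H*p} genuinely translates into density of Range$(\mathcal{S})$ in the intermediate space, which the first bullet relies on critically. Once this functional-analytic identification is nailed down, both statements reduce to standard factorization-method bookkeeping, with Lemma~\ref{T-invs0} (invertibility of $T$) and Lemma~\ref{H*p} (compactness of $\bar{\mathcal{S}}^*$) doing the remaining work.
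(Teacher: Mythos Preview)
Your proposal is correct and follows the standard factorization-method argument that the paper implicitly invokes by citing \cite[Theorem~6.2]{pour2017}; the ingredients you identify (Theorem~\ref{TR1}, the invertibility of $T$ from Lemma~\ref{T-invs0}, density of $\text{Range}(\mathcal{S})$ via injectivity of $\mathcal{S}^*$ in Lemma~\ref{H*p}, and compactness of $\bar{\mathcal{S}}^*$) are exactly what that referenced proof uses. Your closing remark about verifying the duality identification is well taken but already handled by Lemma~\ref{H*} and its proof in Appendix~\ref{H*pruf}.
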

\begin{proof}
The argument directly follows that of~\cite[Theorem 6.2]{pour2017}.
\end{proof}

\vspace{-3mm}
\subsection{The multiphysics $\mathfrak{L}$ indicator \!\!\!}

Theorem~\ref{TR2} poses two challenges in that:~(i) the featured indicator $ \|\mathcal{S}\bg_\epsilon^L \|_{H^{-1/2}(\Gamma)^3 \times H^{-1/2}(\Gamma) \times H^{1/2}(\Gamma)}$ inherently depends on the unknown support $\Gamma$ of hidden scatterers, and (ii) construction of the wavefront density $\bg_\epsilon^L \in L^2(\OOd)^3\!\times\! L^2(\OOd)$ is implicit in the theorem~\cite{Audibert2014,pour2020(2)}. These are conventionally addressed by replacing $ \|\mathcal{S}\bg_\epsilon^L \|_{H^{-1/2}(\Gamma)^3 \times H^{-1/2}(\Gamma) \times H^{1/2}(\Gamma)}$ with $\norms{\!\bg_\epsilon^L\!\!}_{L^2(\OOd)^3 \times L^2(\OOd)}$ which in turn is computed by way of Tikhonov regularization~\cite{Kress1999} as follows. 

First, note that when the measurements are contaminated by noise (e.g.,~sensing errors and fluctuations in the medium properties), one has to deal with the noisy operator $\Lambda^{\nxs\updelta}\!$ satisfying  
\vspace{-1 mm}
\beq\lb{Ns-op}
\norms{\nxs \Lambda^{\nxs\updelta} -\exs  \Lambda \nxs} \,\,\, \leqslant \,\exs \updelta , 
\vspace{-1 mm}
\eeq
where $\updelta\!>\!0$ is a measure of perturbation in data independent of $\Lambda$. Assuming that $\Lambda^{\nxs\updelta}$ is compact, the Tikhonov-regularized solution $\bg_{\upeta}\nxs$ to (\ref{FF}) is obtained by non-iteratively minimizing the cost functional $J_{\upeta}(\bPhi_{\text{\tiny L}};\,\cdot)\colon \exs L^2(\OOd)^3 \exs \times\! L^2(\OOd) \rightarrow \mathbb{R}$ defined by 
\vspace{-1 mm}  
\beq\label{lssm1}
J_{\upeta}(\bPhi_{\text{\tiny L}};\, \bg) ~ \colon \!\!\! =~ \!\!   \norms{{\Lambda}^{\nxs\updelta} \exs \bg\,-\,\bPhi_{\text{\tiny L}}}^2_{L^2} \,+\,\,\,  \upeta \!\nxs \norms{\bg}^2_{L^2}, \qquad \bg\in L^2(\OOd)^3\nxs\times\nxs L^2(\OOd),
\vspace{-1 mm}  
\eeq
where the regularization parameter $\upeta=\upeta(\updelta, L)$ is obtained by way of the Morozov discrepancy principle~\cite{Kress1999}. On the basis of~\eqref{lssm1}, the well-known linear sampling indicator $\mathfrak{L}$ is constructed as 
\vspace{-1 mm}  
\beq\lb{LSM2}
\mathfrak{L} ~ \colon \!\!\! =~ \!\! \frac{1}{\norms{\bg_{\upeta\nxs}}_{L^2}}, \qquad
\textcolor{black}{
\bg_{\upeta\nxs} ~ \colon \!\!\! =~ \!\! \min_{\bg \exs\in\exs L^2(\OOd)^3\times L^2(\OOd)} J_{\upeta}(\bPhi_{\text{\tiny L}};\, \bg).}
\vspace{-1 mm}  
\eeq

By design, $\mathfrak{L}$ achieves its highest values at the loci of hidden scatterers $\Gamma$. More specifically, the behavior of $\mathfrak{L}$ within $\mathcal{B}$ may be characterized as the following, 
\vspace{-1.5mm}
\beq\lb{LSMB}
\begin{aligned}
& \text{if}\,\,\, L \subset \Gamma \quad \iff \quad  \liminf\limits_{\eta \rightarrow 0} \exs \mathfrak{L}(\bg_{\upeta}) \,>\, 0, \\*[-0.5mm]
& \text{if}\,\,\, L \subset \mathcal{B}\setminus \nxs\overline{\Gamma} \quad \iff \quad \lim\limits_{\eta \rightarrow 0} \mathfrak{L}(\bg_{\upeta}) \,=\, 0. 
\end{aligned}
\vspace*{-1.5mm}
\eeq

\vspace{-3mm}
\subsection{The modified $\mathfrak{G}$ indicator \!\!\!}

In cases where the illumination frequency and/or the background's permeability is sufficiently large so that $\Im{\gamma} \to 0$ according to~\eqref{gam}, one may deduce from~\eqref{up_a}-\eqref{fact} that the second factorization of the poroelastic scattering operator is symmetrized as follows,
\vspace{-1 mm} 
\beq\lb{fact2} 
\Lambda ~=~ {\mathcal{S}}^* \exs T \exs \mathcal{S}. 
\vspace{-1 mm}
\eeq
In this setting, one may deploy the more rigorous $\mathfrak{G}$ indicator~\cite{pour2017} to dispense with the approximations underlying the $\mathfrak{L}$ imaging functional. This results in a more robust reconstruction especially with noisy data~\cite{pour2020(2)}. The $\mathfrak{G}$ indicator takes advantage of the positive and self-adjoint operator $\Lambda_{\sharp}:\, L^2(\OOd)^3 \times L^2(\OOd) \rightarrow L^2(\OOd)^3 \times L^2(\OOd)$ defined on the basis of the scattering operator $\Lambda$ by
\vspace{-1mm}
\beq\lb{Fsd}
\Lambda_{\sharp}\,\colon \!\!\!=\, \frac{1}{2} \big{|}\exs\Lambda+\Lambda^{\nxs *}\big{|} \:+\: \big{|}\frac{1}{2 \textrm{\emph{i}}} (\Lambda\nxs-\Lambda^{\nxs *})\big{|}, 
\vspace{-1mm}
\eeq
with the affiliated factorization~\cite{Kirsch2008} 
 \vspace{-0.5mm}
\beq\lb{facts2}
\Lambda_\sharp ~=~ \mathcal{S}^* \exs T_\sharp \exs \mathcal{S}, 
\vspace{-0.5mm}
\eeq
where in light of Lemma~\ref{T-invs0}, the middle operator $T_\sharp$ is coercive with reference to~\cite[Lemma 5.7]{pour2017} i.e., there exists a constant $c>0$ independent of $\boldsymbol{\uppsi} = \mathcal{S}\bg$ such that $\forall \xxs \boldsymbol{\uppsi}  \in {H^{-1/2}(\Gamma)^3 \times H^{-1/2}(\Gamma) \times H^{1/2}(\Gamma)}$,
\vspace{-0.5mm}
\beq\lb{coT}
(\exs \bg, \exs \Lambda_\sharp \exs  \bg)_{L^2(\OOd)^3 \times L^2(\OOd)}  ~=~ \big \langle \boldsymbol{\uppsi}, \, T_\sharp \xxs \boldsymbol{\uppsi} \big \rangle_{\Gamma} ~\geqslant~ c \norms{\!\boldsymbol{\uppsi} \nxs}^2_{H^{-1/2}(\Gamma)^3 \times H^{-1/2}(\Gamma) \times H^{1/2}(\Gamma)}.
\vspace{-0.5mm}
\eeq

Thanks to~\eqref{coT}, the term $\norms{\!\mathcal{S}\bg_\epsilon\!}^2_{H^{-1/2}(\Gamma)^3 \times H^{-1/2}(\Gamma) \times H^{1/2}(\Gamma)}$ in Theorem~\ref{TR2} may be safely replaced by $(\exs \bg_\epsilon, \exs \Lambda_\sharp \exs  \bg_\epsilon)_{L^2(\OOd)^3 \times L^2(\OOd)}$ which is computable without prior knowledge of~$\Gamma$. 

In presence of noise, the perturbed operator $\Lambda_{\sharp}^{\nxs\updelta}$ is deployed satisfying 
\vspace{-1mm}
\beq\lb{Ns-op2}
\norms{\nxs \Lambda^{\nxs\updelta}_\sharp - \Lambda_\sharp \nxs} \,\,\, \leqslant \,\, \updelta,
\vspace{-1mm}
\eeq
which is assumed to be compact similar to $\Lambda^{\nxs\updelta}$ in~\eqref{Ns-op}. Then, the regularized cost functional $J_{\upalpha,\updelta}(\bPhi_{\text{\tiny L}};\,\cdot)\colon \exs L^2(\OOd)^3 \exs \times L^2(\OOd) \rightarrow \mathbb{R}$ is constructed according to~\cite[Theorems 4.3]{pour2019}, 
\vspace{-1mm}
\beq  \lb{RJ-alph}
J_{\upalpha,\updelta}(\bPhi_{\text{\tiny L}};\, \bg) ~ \colon \!\!\! =~ \!   \norms{\nxs \Lambda^{\nxs\updelta}\bg\,-\,\bPhi_{\text{\tiny L}} \nxs}^2_{L^2} + \,\, \upalpha (\exs \bg, \exs \Lambda^{\nxs\updelta}_\sharp \exs \bg)_{L^2} +\, \upalpha \exs \updelta  \!  \norms{\nxs \bg \nxs}^2_{L^2}, \qquad \bg\in L^2(\OOd)^3\times L^2(\OOd).
\vspace{-1mm}
\eeq

Here, $\upalpha>0$ represents the regularization parameter specified in terms of $\upeta$ from~\eqref{lssm1} as 
\vspace{-1mm}
\beq\lb{Gamm}
\upalpha(\updelta, L) \,\, \colon \!\!\! = \,\, \frac{\upeta(\updelta, L)}{\norms{\Lambda}_{L^2} + \,\, \updelta}.
\vspace{-1mm}
\eeq

In addition, $\updelta>0$ signifies both a measure of perturbation in data and a regularization parameter that, along with $\upalpha$, is designed to create a robust imaging indicator as per Theorems~\ref{GLSM2}. It should be mentioned that $J_{\upalpha,\updelta}$ is convex and that its minimizer $\bg_{\nxs\upalpha,\updelta} \in L^2(\OOd)^3 \times L^2(\OOd)$ solves the linear system 
\vspace{-0.5mm}
\beq \lb{min-RJ} 
\Lambda^{\nxs\updelta *}(\Lambda^{\nxs\updelta} \bg_{\nxs\upalpha,\updelta} -\, \bPhi_{\text{\tiny L}}) ~+~  \upalpha \exs \big( \exs (\Lambda_\sharp^{\nxs\updelta})^{\nxs\frac{1}{2}*} (\Lambda_\sharp^{\nxs\updelta})^{\nxs\frac{1}{2}} \, \bg_{\nxs\upalpha,\updelta} +\, \updelta \, \bg_{\nxs\upalpha,\updelta} \exs \big) ~=~ \bzero,
\vspace{-0.5mm}
\eeq
which can be computed without iterations~\cite{pour2019}. Within this framework, the following theorem rigorously establishes the relation between the range of operator $\bar{\mathcal{S}}^*$ and the norm of penalty term in~\eqref{RJ-alph}. 

\begin{theorem}\lb{GLSM2}
Under the assumptions of Theorem~\ref{TR1} and additional hypothesis that $\Lambda^{\nxs\updelta}$ and $\Lambda_\sharp^{\nxs\updelta}$ are compact, 
\vspace{-4mm}
\begin{multline}\notag
 \bPhi_{\text{\tiny L}} \,\in\, Range(\bar{\mathcal{S}}^*)  ~~\iff~~ 
\Big\{\limsup\limits_{\upalpha \rightarrow 0}\limsup\limits_{\updelta \rightarrow 0}\big(\exs |(\bg_{\nxs\upalpha,\updelta}, \Lambda^{\nxs\updelta}_\sharp \exs\bg_{\nxs\upalpha,\updelta})| \,+\, \updelta \! \norms{\nxs \bg_{\nxs\upalpha,\updelta} \nxs}^2 \nxs \big) \,<\, \infty  \\  
\iff~ \liminf\limits_{\upalpha \rightarrow 0}\liminf\limits_{\updelta \rightarrow 0}\big(\exs |( \bg_{\nxs\upalpha,\updelta}, \Lambda^{\nxs\updelta}_\sharp \exs \bg_{\nxs\upalpha,\updelta})| \,+\, \updelta \! \norms{\nxs \bg_{\nxs\upalpha,\updelta} \nxs}^2 \nxs \big) \,<\, \infty\Big\}, 
\vspace{-1mm}
\end{multline}
where~$\bg_{\nxs\upalpha,\updelta}$ is a minimizer of the perturbed cost functional~\eqref{RJ-alph} in the sense of~\cite[Lemma 6.8]{pour2019}. 
\end{theorem}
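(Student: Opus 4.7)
The plan is to follow the generalized linear sampling framework of~\cite{Audibert2014,pour2019}, working within the symmetrized regime $\Im\gamma\!\to\!0$ so that $\bar{\mathcal{S}}^*\!=\mathcal{S}^*$ and the two factorizations $\Lambda = \mathcal{S}^* T \mathcal{S}$ and $\Lambda_\sharp = \mathcal{S}^* T_\sharp \mathcal{S}$ share the same outer operator. The two essential ingredients are already in place: (i)~Lemma~\ref{H*p} furnishes compactness, injectivity, and dense range of $\mathcal{S}^*$, and (ii)~the coercivity estimate~\eqref{coT} controls $\norms{\mathcal{S}\bg}^2_{H^{-1/2}(\Gamma)^3\times H^{-1/2}(\Gamma)\times H^{1/2}(\Gamma)}$ by $(\bg,\Lambda_\sharp\bg)_{L^2}$. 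The chain of implications is then closed by exploiting the minimizing property of $\bg_{\upalpha,\updelta}$, the perturbation bounds~\eqref{Ns-op} and~\eqref{Ns-op2}, and the choice~\eqref{Gamm} linking $\upalpha$ to $\updelta$.

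For the forward implication $\bPhi_{\text{\tiny L}}\!\in\!Range(\bar{\mathcal{S}}^*) \Rightarrow$ boundedness, I would first invoke density of $\Lambda(L^2)$ (Lemma~\ref{FF_op}) to produce, for every $\upepsilon\!>\!0$, an element $\bg_{\upepsilon}\!\in\! L^2(\mathcal{G})^3\!\times\! L^2(\mathcal{G})$ with $\norms{\Lambda\bg_\upepsilon-\bPhi_{\text{\tiny L}}}_{L^2}\!\leqslant\!\upepsilon$ and a uniform bound on $\norms{\mathcal{S}\bg_\upepsilon}$ (this is the analogue of the ``good test sequence'' construction in~\cite[Lemma~6.8]{pour2019}, which uses $\bPhi_{\text{\tiny L}}\!=\!\mathcal{S}^*\bpsi_L$ together with dense range of $\mathcal{S}$). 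Plugging $\bg_\upepsilon$ into~\eqref{RJ-alph} in place of the minimizer yields
\begin{equation*}
\norms{\Lambda^{\nxs\updelta}\bg_{\nxs\upalpha,\updelta}\!-\!\bPhi_{\text{\tiny L}}}^2_{L^2} + \upalpha\,(\bg_{\nxs\upalpha,\updelta},\Lambda^{\nxs\updelta}_\sharp\bg_{\nxs\upalpha,\updelta})_{L^2} + \upalpha\updelta\norms{\bg_{\nxs\upalpha,\updelta}}^2_{L^2} \,\leqslant\, J_{\upalpha,\updelta}(\bPhi_{\text{\tiny L}};\bg_\upepsilon),
\end{equation*}
and expanding the right-hand side with $\Lambda^{\nxs\updelta}\!=\!\Lambda\!+\!O(\updelta)$, $\Lambda_\sharp^{\nxs\updelta}\!=\!\Lambda_\sharp\!+\!O(\updelta)$, the factorization of $\Lambda_\sharp$, and~\eqref{coT} gives the desired $\limsup$-bound on $|(\bg_{\nxs\upalpha,\updelta},\Lambda^{\nxs\updelta}_\sharp\bg_{\nxs\upalpha,\updelta})|\!+\!\updelta\norms{\bg_{\nxs\upalpha,\updelta}}^2$ after sending first $\updelta\!\to\!0$ and then $\upalpha,\upepsilon\!\to\!0$ along the scaling~\eqref{Gamm}.

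For the converse, assume $\liminf(|(\bg_{\nxs\upalpha,\updelta},\Lambda^{\nxs\updelta}_\sharp\bg_{\nxs\upalpha,\updelta})|\!+\!\updelta\norms{\bg_{\nxs\upalpha,\updelta}}^2)\!<\!\infty$ along some subsequence. Coercivity~\eqref{coT}, perturbed via~\eqref{Ns-op2}, yields a uniform bound on $\norms{\mathcal{S}\bg_{\nxs\upalpha,\updelta}}_{H^{-1/2}(\Gamma)^3\times H^{-1/2}(\Gamma)\times H^{1/2}(\Gamma)}$, whence one extracts a weakly convergent subsequence $\mathcal{S}\bg_{\nxs\upalpha,\updelta}\!\rightharpoonup\!\bpsi^\star$. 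Boundedness of $T$ (Lemma~\ref{I{T}>0}) propagates the weak convergence to $T\mathcal{S}\bg_{\nxs\upalpha,\updelta}\!\rightharpoonup\!T\bpsi^\star$, and compactness of $\mathcal{S}^*$ (Lemma~\ref{H*p}) then upgrades it to strong convergence $\Lambda\bg_{\nxs\upalpha,\updelta}\!=\!\mathcal{S}^*T\mathcal{S}\bg_{\nxs\upalpha,\updelta}\!\to\!\mathcal{S}^*T\bpsi^\star$ in $L^2(\mathcal{G})^3\!\times\! L^2(\mathcal{G})$. A parallel argument based on the minimizing property and the choice~\eqref{Gamm} shows $\Lambda^{\nxs\updelta}\bg_{\nxs\upalpha,\updelta}\!\to\!\bPhi_{\text{\tiny L}}$, so comparison yields $\bPhi_{\text{\tiny L}}\!=\!\mathcal{S}^*(T\bpsi^\star)\!\in\! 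Range(\mathcal{S}^*)$.

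The main obstacle I anticipate is the careful control of the $(\upalpha,\updelta)\!\to\!0$ double limit in~\eqref{RJ-alph}: one must verify that the penalty term and the noise-induced cross terms (arising from $\Lambda^{\nxs\updelta}\!-\!\Lambda$ and $\Lambda_\sharp^{\nxs\updelta}\!-\!\Lambda_\sharp$) vanish in the correct order, which is what dictates the scaling~\eqref{Gamm}. This bookkeeping, together with verifying that the construction of the ``good test sequence'' genuinely delivers a uniformly bounded $\norms{\mathcal{S}\bg_\upepsilon}$ (and not merely a bounded $\norms{\bg_\upepsilon}_{L^2}$), is the technical heart of the argument; the remaining steps are essentially structural and follow the template of~\cite[Thm.~6.2]{pour2017} and~\cite[Thm.~4.3]{pour2019}.
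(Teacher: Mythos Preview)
Your proposal is correct and follows the standard generalized linear sampling argument of~\cite{Audibert2014,pour2019}. The paper itself does not supply a proof of this theorem: it states the result and relies entirely on the framework of~\cite[Theorem~4.3]{pour2019} and~\cite[Lemma~6.8]{pour2019}, having already verified in Lemmas~\ref{H*p}--\ref{T-invs0} and~\eqref{coT} the structural hypotheses (compactness and dense range of $\mathcal{S}^*$, boundedness and coercivity of $T$, coercivity of $T_\sharp$) that those external results require. Your sketch is therefore not merely consistent with the paper's approach---it is a faithful unpacking of the very argument the paper invokes by citation, and the ingredients you single out (the symmetrized factorization under $\Im\gamma\to 0$, the good test sequence, weak-to-strong upgrade via compactness of $\mathcal{S}^*$, and the $(\upalpha,\updelta)$ bookkeeping under~\eqref{Gamm}) are exactly the ones on which~\cite{pour2019} rests.
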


In this setting, the imaging indicator $\mathfrak{G}$ takes the form 
\vspace{-1mm}
\beq\lb{GLSMgs}
\mathfrak{G} \,\, = \,\, \dfrac{1}{\sqrt{\norms{\!(\Lambda^{\nxs\updelta}_\sharp)^{\frac{1}{2}} \exs \bg_{\nxs\upalpha,\updelta} \nxs}^2 \exs+\,\, \updelta \nxs \norms{ \bg_{\nxs\upalpha,\updelta} \nxs}^2}},
\vspace{-1mm}
\eeq      
with similar characteristics to $\mathfrak{L}$ as in~\eqref{LSMB} yet more robustness against noise~\cite{pour2020(2)}.   

\begin{rem}
It is worth noting that the sampling-based characterization of $\Gamma$ from near-field data is deeply rooted in geometrical considerations, so that the fracture indicator functionals~(\ref{LSM2}) and~(\ref{GLSMgs}) may exhibit only a minor dependence on the complex contact condition -- described according to~\eqref{GE} by the heterogeneous distribution of:~the stiffness matrix~$\bK(\bxi)$, permeability coefficient~$\varkappa_{\textrm{\tiny f}}(\bxi)$, effective stress coefficient~$\alpha_{\textrm{\tiny f}}(\bxi)$, Skempton coefficient~$\beta_{\textrm{\tiny f}}(\bxi)$, and fluid pressure dissipation factor $\Pi(\bxi)$ on $\bxi \in \Gamma$. This attribute may be traced back to Remark~\ref{LSMrem} where the opening displacement profile $\ba\!\in\!\tilde{H}^{1/2}(L)^3 \nxs\times\nxs \tilde{H}^{1/2}(L)\nxs\times\nxs \tilde{H}^{-1/2}(L)$ -- which is intimately related to the interface law, is deemed arbitrary (within the constraints of admissibility). This quality makes the proposed imaging paradigm particularly attractive in situations where the interfacial parameters are unknown a priori, which opens possibilities for the sequential geometrical reconstruction and interfacial characterization of such anomalies e.g., see~\cite{pour2017(2),pour2018}.   
\end{rem}

\begin{rem}
It should be mentioned that there are recent efforts to systematically adapt the $\mathfrak{G}$ indicator for application to asymmetric scattering operators~\cite{audi2017}. These developments, however, do not lend themselves to poroelastic inverse scattering due to the non-selfadjoint nature of $\Lambda$. A fundamental treatment of the poroelastic $\mathfrak{G}$ indicator in the general case where $\gamma \in \mathbb{C}$ could be an interesting subject for a future study. 
\end{rem}

\vspace{-2mm}
\section{Computational treatment and results} \label{numerics}

This section examines the performance of multiphasic indicators~\eqref{LSM2} and~\eqref{GLSMgs} through a set of numerical experiments. In what follows, the synthetic data are generated within the FreeFem++ computational platform~\cite{Hech2012}.

\vspace*{-2mm}
\subsection{Testing configuration}

Two test setups shown in Figs.~\ref{CN} and~\ref{CT1} are considered where the background is a poroelastic slab ${P}$ of dimensions $22.3$ $\!\times\!$ $22.3$ endowed with (evolving and stationary) hydraulic fracture networks. Following~\cite{ding2013,yew1976}, the properties of Pecos sandstone are used to characterize ${P}$. On setting the reference scales $\mu_r = 5.85$ GPA, $\rho_r = 10^3$ kg/m$^3$, and $\ell_r = 0.14$m for stress, mass density, and length, respectively, the non-dimensionalized quantities of~Table~\ref{prop} are obtained and used for simulations. Accordingly, the complex-valued wave speeds~\cite{shap2015}~affiliated with the modal decomposition in~\eqref{dec} read $\textrm{c}_{\textrm{s}} = 0.66 + 8.8 \!\times\! 10^{-6} \textrm{i}$, $\textrm{c}_{\textrm{p}_1\!} = 1.26 + 3 \!\times\! 10^{-7} \textrm{i}$, and $\textrm{c}_{\textrm{p}_2\!} = 5.8 \!\times\! 10^{-3} + 5.8 \!\times\! 10^{-3} \textrm{i}$. The boundary condition on $\partial P$ is such that the total traction and pore pressure vanish for both incident and scattered fields i.e.,~$(\bn \exs\sip\exs \bC \colon\! \nabla \bu\ff,p\ff) = (\bn \exs\sip\exs \bC \colon\! \nabla \bu,p) = \bzero$. In \emph{Setup I}, a hydraulic fracture network $\Gamma_{1}-\Gamma_{9}$ is induced in four steps as shown in Fig.~\ref{CN}. A detailed description of scatterers including the center $(x_c, y_c)$, length $\ell$, and orientation $\phi$ of each crack $\Gamma_{\kappa}$, $\kappa = \lbrace 1, 2, ..., 9 \rbrace$ is provided in Table~\ref{Num1}. All fractures in this configuration are highly permeable as per Remark~\ref{HPI} and characterized by the interfacial stiffness~$\bK(\bxi) = \bzero$, effective stress coefficient ${\alpha}_{\textrm{\tiny \emph{f}}}(\bxi) = 0.85$, and Skempton coefficient ${\beta}_{\textrm{\tiny \emph{f}}}(\bxi) = 0.3$ on $\bxi \in \!\!{\textstyle \bigcup\limits_{\kappa =1}^{9} \!\! \Gamma_{\!\kappa}}$. The latter quantities are taken from~\cite{naka2007}. Note that in \emph{Setup I}, the excitation and sensing grid $\mathcal{G}$ straddles two (vertical) monitoring wells and the horizontal section of the injection well. Depicted in~Figs.~\ref{CT1} and~\ref{CT2}, \emph{Setup II} features hydraulic fractures $\Gamma_{10}-\Gamma_{15}$ of distinct length scales as described in Table~\ref{Num2}. The discontinuity interfaces in this configuration are modeled as thin poroelastic inclusions characterized by $\lambda_{\text{\sf f}} = 0.1$, $\mu_{\text{\sf f}} = 0.2$, $M_{\text{\sf f}} = 0.33$, $\kappa_{\text{\sf f}} = 5 \times 10^{-7}$, and $\phi_{\text{\sf f}} = 0.35$, while the remaining material parameters are similar to their counterparts in the background as reported in~Table~\ref{prop}. In~\emph{Setup II}, the excitation and measurements are solely conducted in the treatment well shown in~Fig.~\ref{CT1}.     

 \begin{table}[!h]
\vspace*{-1mm}   
 \begin{center}
 \caption{\small Poroelastic properties of the background.} \vspace*{1mm}
\label{prop}
 \begin{tabular}{|l|l|l|}
\hline
{\small{\bf property}}  & {\small{\bf value}} & {\small{\bf dimensionless value}} 
\\ \hline \hline 
{\small{first Lam\'{e} parameter (drained)}}  & $\lambda'=$ 2.74 GPA & $\lambda =$ 0.47 
\\ \hline
{\small{drained shear modulus}}  & $\mu'=$ 5.85  GPA & $\mu =$ 1 
\\ \hline
{\small{Biot modulus}}  & $M'=$ 9.71 GPA & $M =$ 1.66 
\\ \hline
{\small{total density}}  & $\rho'=$  2270 kg/m$^3$ & $\rho =$ 2.27
\\ \hline
{\small{fluid density}}  & $\rho_f'=$  1000 kg/m$^3$ & $\rho_f$ = 1
\\ \hline
{\small{apparent mass density}}  & $\rho_a'=$  117 \, kg/m$^3$ & $\rho_a=$ 0.117
\\ \hline
{\small{permeability coefficient}}  & $\kappa'=$  0.8  mm$^\text{4}$/N & $\kappa=$ 24.5 $\!\times\!$ 10$^{-\text{7}}$
\\ \hline
{\small{porosity}}  & $\phi=$  0.195 & $\phi=$ 0.195
\\ \hline
{\small{Biot effective stress coefficient}}  & $\alpha=$  0.83 & $\alpha=$ 0.83
\\ \hline
{\small{frequency}}  & $\omega'=$  12 kHz& $\omega =$ 3.91
\\ \hline
\end{tabular}
\end{center}
\vspace*{-3.5mm}
\end{table}
 
 \begin{table}[!h]
\vspace*{-1mm}
\begin{center}
\caption{\small Process zone configuration illustrated in Fig.~\ref{CN}:~center $(x_c, y_c)$, length $\ell$, and orientation $\phi$ (with respect to $x$ axis) of cracks $\Gamma_{\kappa}$, $\kappa = \lbrace 1, 2, ..., 9 \rbrace$.} \vspace*{1mm}
\label{Num1}
\begin{tabular}{|c|c|c|c|c|c|c|c|c|c|} \hline
\!\!$\kappa$\!\! & 1 & 2 & 3 & 4&5 & 6 & 7 & 8 & 9 \\ \hline\hline  
\!\!$x_{\text{c}}(\Gamma_\kappa)$\!\!    & \!\!$-5.5$\!\!  & \!\!$-0.25$\!\!  & \!\!$4.3$\!\! & \!\!$-3.3$\!\! & \!\!$1.6$\!\! & \!\!$-4.3$\!\! & \!\!$-3.4$\!\!  & \!\!$-2$\!\! & \!\!$2.9$\!\! \\  
 \hline
 \!\!$y_{\text{c}}(\Gamma_\kappa)$\!\!    & \!\!$0$\!\!  & \!\!$0$\!\!  & \!\!$-1$\!\! & \!\!$0$\!\! & \!\!$0.5$\!\! & \!\!$0.5$\!\! & \!\!$-1.07$\!\!  & \!\!$0$\!\! & \!\!$0$\!\! \\ 
 \hline
\!\!$\ell \exs (\Gamma_\kappa)$\!\!   & \!\!$3$\!\!  & \!\!$2.2$\!\! & \!\!$3$\!\! & \!\!$1$\!\! & \!\!$2.75$\!\! & \!\!$1.8$\!\! & \!\!$1.25$\!\!  &  \!\!$2.4$\!\! & \!\!$2.2$\!\! \\
 \hline
 \!\!$\phi \exs (\Gamma_\kappa)$\!\!   & \!\!$0.47\pi$\!\!  & \!\!$0.6\pi$\!\! & \!\!$0.56\pi$\!\! & \!\!$0.56\pi$\!\! & \!\!$0.42\pi$\!\!  & \!\!$0.5\pi$\!\!  & \!\!$0.37\pi$\!\!  &  \!\!$0.5\pi$\!\! & \!\!$0.56\pi$\!\! \\ 
 \hline
\end{tabular}
\end{center}
\vspace*{-3.5mm}
\end{table}

 \begin{table}[!h]
\vspace*{-1mm}
\begin{center}
\caption{\small Process zone configuration illustrated in~Figs.~\ref{CT1},~\ref{CT2}:~center $(x_c, y_c)$, length $\ell$, and orientation $\phi$ (with respect to $x$ axis) of cracks $\Gamma_{\kappa}$, $\kappa = \lbrace 10, 11, ..., 15 \rbrace$.} \vspace*{1mm}
\label{Num2}
\begin{tabular}{|c|c|c|c|c|c|c|} \hline
\!\!$\kappa$\!\! & 10 & 11 & 12 & 13 & 14 & 15  \\ \hline\hline  
\!\!$x_{\text{c}}(\Gamma_\kappa)$\!\!    & \!\!$-4.76$\!\!  & \!\!$-1.13$\!\!  & \!\!$3.46$\!\! & \!\!$-3.06$\!\! & \!\!$-1.13$\!\! & \!\!$1.16$\!\!  \\  
 \hline
 \!\!$y_{\text{c}}(\Gamma_\kappa)$\!\!    & \!\!$0$\!\!  & \!\!$0$\!\!  & \!\!$0$\!\! & \!\!$0$\!\! & \!\!$0$\!\! & \!\!$0$\!\!  \\ 
 \hline
\!\!$\ell \exs (\Gamma_\kappa)$\!\!   & \!\!$2.35$\!\!  & \!\!$0.74$\!\! & \!\!$7.03$\!\! & \!\!$2.35$\!\! & \!\!$0.74$\!\! & \!\!$7.03$\!\!  \\
 \hline
 \!\!$\phi \exs (\Gamma_\kappa)$\!\!   & \!\!$0.3\pi$\!\!  & \!\!$0.5\pi$\!\! & \!\!$0.46\pi$\!\! & \!\!$0.3\pi$\!\! & \!\!$0.5\pi$\!\!  & \!\!$0.46\pi$\!\!   \\ 
 \hline
\end{tabular}
\end{center}
\vspace*{-3.5mm}
\end{table}

\vspace*{-2mm}
\subsection{Forward scattering simulations}

Every sensing step entails in-plane harmonic excitation via total body forces $\bg_s$ and fluid volumetric sources $g_f$ at a set of points on $\mathcal{G}$. The excitation frequency \mbox{$\omega = 3.91$} is set such that the induced shear wavelength $\lambda_s$ in the specimen is approximately one, serving as a reference length scale. The incident waves interact with the hydraulic fracture network in each setup giving rise to the scattered field $(\bu,p)$, governed by~\eqref{GE}, whose pattern $(\bu\obs,p\obs)$ over the observation grid $\mathcal{G}$ is then computed. For both illumination and sensing purposes, $\mathcal{G}$ is sampled by a uniform grid of $N$ excitation and observation points. In \emph{Setup I}, the H-shaped incident/observation grid is comprised of $N =330$ source/receiver points, while  in \emph{Setup II}, the L-shaped support of injection well is uniformly sampled at $N= 130$ points for excitation and sensing. 

\vspace*{-2mm}
\subsection{Data Inversion}

With the preceding data, one may generate the multiphasic indicator maps affiliated with~\eqref{LSM2} and~\eqref{GLSMgs} in three steps, namely by:~(1) constructing the discrete scattering operators $\boldsymbol{\Lambda}$ and $\boldsymbol{\Lambda}^{\nxs\updelta}$ from synthetic data $(\bu\obs,p\obs)$, (2) computing the trial signature patterns $\bPhi_{\text{\tiny L}}$ pertinent to a poroelastic host domain, and (3) evaluating the imaging indicators ($\mathfrak{L}$ or $\mathfrak{G}$) in the sampling area through careful minimization of the discretized cost functionals~\eqref{lssm1} and~\eqref{RJ-alph} as elucidated in the sequel. 

\subsubsection*{Step 1:~construction of the multiphase scattering operator}
 Given the in-plane nature of wave motion, i.e., that the polarization amplitude of excitation $\bg_s$ and the nontrivial components of associated scattered fields $\bu\obs$ lay in \mbox{the $x-y$} plane of orthonormal bases $(\be_1,\be_2)$, the discretized scattering operator $\boldsymbol{\Lambda}$ may be represented by a $3N\!\times 3N$ matrix with components    
\vspace{0 mm}
\beq\lb{mat2}
\boldsymbol{\Lambda}(4i\nxs+\nxs1\!:\!4i\nxs+\nxs4, \,4j\nxs+\nxs1\!:\!4j\nxs+\nxs4) ~=\, 
\left[\!\begin{array}{cc:c}
u_{11}\! & \!u_{12}\!  & \textsf{u}_{1} \\*[0.75mm]
u_{21}\! & \!u_{22}\!  & \textsf{u}_{2}\\*[1.25mm]
\hdashline
\rule{0pt}{1.25ex}p_{1} & p_{2}  & \!\textsf{p} 
\end{array}\!\right] (\bxi_i,\by_j),  \qquad i,j = 0,\ldots N-1,
\vspace{0 mm}
\eeq
where~${\Lambda}_{rs}(\bxi_i,\by_j)$ $(r,s\!=\!1,2)$ is the $r^{\textrm{th}}$ component of the solid displacement measured at $\bxi_i$ due to a unit force applied at $\by_j$ along the coordinate direction $s$ and $p_{s}$ is the associated pore pressure measurement; also, $\textsf{\bf u}$ signifies the $2\!\times\!1$ solid displacement at $\bxi_i$ due to a unit injection at $\by_j$ and $\textsf{p}$ is the affiliated pore pressure. Note that here the general case is presented where excitations and measurements are conducted along all dimensions. $\boldsymbol{\Lambda}$ may be downscaled as appropriate according to the testing setup.

\vspace{2mm}
\emph{Noisy operator.}~To account for the presence of noise in measurements, we consider the perturbed operator
\vspace{0 mm}
\begin{equation}\label{DFN}
\boldsymbol{\Lambda}^{\nxs\updelta} \,\, \colon \!\!\!= \, (\boldsymbol{I} + \boldsymbol{N}_{\!\epsilon} ) \exs \boldsymbol{\Lambda}, 
\vspace{1 mm}
\end{equation}
where $\boldsymbol{I}$ is the $3N \times 3N$ identity matrix, and $\boldsymbol{N}_{\!\epsilon}$ is the noise matrix of commensurate dimension whose components are uniformly-distributed (complex) random variables in $[-\epsilon, \, \epsilon]^2$. In what follows, the measure of noise in data with reference to definition~\eqref{Ns-op} is $\updelta = \, \norms{\!\boldsymbol{N}_{\!\epsilon} \exs \boldsymbol{\Lambda}\!} \, = 0.05$. 

\subsubsection*{Step 2:~generation of a multiphysics library of scattering patterns}
This step aims to construct a suitable right hand side for the discretized scattering equation in unbounded and bounded domains pertinent to the analytical developments of Section~\ref{SSA} and numerical experiments of this section, respectively. 

\vspace{2mm}
\emph{Unbounded domain in $\R^3$.}
In this case, the poroelastic trial pattern $\bPhi_{\text{\tiny L}} \in L^2(\mathcal{G})^3\times L^2(\mathcal{G})$ is given by Definition~\ref{phi-infinity} indicating that~(a)~the right hand side is not only a function of the dislocation geometry $L$ but also a function of the trial density $\ba\!\in\!\tilde{H}^{1/2}(L)^3 \nxs\times\nxs \tilde{H}^{1/2}(L)\nxs\times\nxs \tilde{H}^{-1/2}(L)$, and (b) computing $\bPhi_{\text{\tiny L}}$ generally requires an integration process at every sampling point $\bx_{\small \circ}$. Conventionally, one may dispense with the integration process by considering a sufficiently localized (trial) density function e.g., see~\cite{pour2017,pour2019}.

\vspace{2mm}
\emph{Bounded domain.} This case corresponds to the numerical experiments of this section where the background is a poroelastic plate ${P}$ of finite dimensions. In this setting, following~\cite{nguy2019}, it is straightforward to show that the trial patterns $\bPhi_{\text{\tiny L}} = (\bu_{\xxs\text{\tiny L}}, p_{\xxs\text{\tiny L}})$ for a finite domain is governed by  
\vspace{-1mm}
\beq\lb{up_a2}
\begin{aligned}
&\nabla \exs\sip\exs (\bC \exs \colon \! \nabla \bu_{\text{\tiny L}}) \,-\, (\alpha-\dfrac{\rho_f}{{\gamma}}) \nabla p_{\xxs\text{\tiny L}}  \,+\, \omega^2 (\rho-\dfrac{\rho_f^2}{{\gamma}}) \bu_{\text{\tiny L}}\,=\, \bzero \quad& \text{in}&\,\,\, {P}\,\backslash \exs \overline{L}, \\*[0.5mm]
&\dfrac{1}{{\gamma}\omega^2}\nabla^2p_{\xxs\text{\tiny L}} \,+\, {M}^{-1} p_{\xxs\text{\tiny L}} \,+\, (\alpha-\dfrac{\rho_f}{{\gamma}}) \nabla \exs\sip\exs \bu_{\text{\tiny L}}  \,=\, 0  \quad & \text{in}&\,\,\,  {P}\,\backslash \exs \overline{L} \\*[1.5mm]
&(\bn \cdot \bC \exs \colon \!  \nabla  \bu_{\text{\tiny L}},  p_{\xxs\text{\tiny L}}) \,=\, \bzero  \quad &\,\text{on}& \,\, \partial P, \\*[3.0mm]
& (\dbr{\bu_{\text{\tiny L}}},\dbr{\exs p_{\xxs\text{\tiny L}}},-\dbr{\xxs q_{\xxs\text{\tiny L}}}) \,=\, \ba, \quad \dbr{\xxs \bt_{\text{\tiny L}}} \,=\, \bzero \quad& \text{on}&\,\,\,  L.
\end{aligned}   
\vspace{0.5mm}
\eeq 

In what follows, the trial signatures $\bPhi_{\bx_{\small \circ}}^{\textrm{\bf{n}},\upiota} = (\textrm{\bf{u}}_{\xxs\bx_{\small \circ}}^{\textrm{\bf{n}},\upiota},\textrm{{p}}_{\bx_{\small \circ}}^{\textrm{\bf{n}},\upiota})(\bxi_i)$ over the observation grid $\bxi_i \nxs\in \mathcal{G}$ are computed separately for every sampling point $\bx_{\small \circ} \in L \subset P$ by solving
\vspace{-1mm}
\beq\lb{PhiL2}
\begin{aligned}
&\nabla \exs\sip\exs (\bC \exs \colon \! \nabla \textrm{\bf{u}}_{\xxs\bx_{\small \circ}}^{\textrm{\bf{n}},\upiota}) \,-\, (\alpha-\dfrac{\rho_f}{{\gamma}}) \nabla \textrm{{p}}_{\bx_{\small \circ}}^{\textrm{\bf{n}},\upiota}  \,+\, \omega^2 (\rho-\dfrac{\rho_f^2}{{\gamma}}) \textrm{\bf{u}}_{\xxs\bx_{\small \circ}}^{\textrm{\bf{n}},\upiota}\,=\, \bzero \quad& \text{in}&\,\,\, {P}\,\backslash \exs \overline{L}, \\*[0.5mm]
&\dfrac{1}{{\gamma}\omega^2}\nabla^2\textrm{{p}}_{\bx_{\small \circ}}^{\textrm{\bf{n}},\upiota} \,+\, {M}^{-1} \textrm{{p}}_{\bx_{\small \circ}}^{\textrm{\bf{n}},\upiota} \,+\, (\alpha-\dfrac{\rho_f}{{\gamma}}) \nabla \exs\sip\exs \textrm{\bf{u}}_{\xxs\bx_{\small \circ}}^{\textrm{\bf{n}},\upiota}  \,=\, - (1-\upiota) \exs \delta (\bx_{\small \circ}\!-\bxi)  \quad & \text{in}&\,\,\,  {P}\,\backslash \exs \overline{L} \\*[1.5mm]
&(\bn \cdot \bC \exs \colon \!  \nabla  \textrm{\bf{u}}_{\xxs\bx_{\small \circ}}^{\textrm{\bf{n}},\upiota},  \textrm{{p}}_{\bx_{\small \circ}}^{\textrm{\bf{n}},\upiota}) \,=\, \bzero  \quad &\,\text{on}& \,\, \partial P, \\*[2.5mm]
& \upiota \exs ( \textrm{\bf{n}} \cdot \bC \exs \colon \!  \nabla  \textrm{\bf{u}}_{\xxs\bx_{\small \circ}}^{\textrm{\bf{n}},\upiota} - |{\sf L}|^{-1} \delta (\bx_{\small \circ}\!-\bxi)\xxs \textrm{\bf{n}},  \textrm{{p}}_{\bx_{\small \circ}}^{\textrm{\bf{n}},\upiota}) \,=\, \bzero \quad& \text{on}&\,\,\,  \bx_{\small \circ}\!+\bR{\sf L},  \\*[2.5mm]
&  \dbr{\exs p_{\bx_{\small \circ}}^{\textrm{\bf{n}},\upiota}} \,=\, 0, \,\,\,\, \dbr{\xxs \textrm{\bf{n}} \cdot \bC \exs \colon \!  \nabla  \textrm{\bf{u}}_{\xxs\bx_{\small \circ}}^{\textrm{\bf{n}},\upiota} \xxs} \,=\, \bzero \quad & \text{on}&\,\,\,  \bx_{\small \circ}\!+\bR{\sf L}. 
\end{aligned}   
\vspace{-1mm}
\eeq 
where $\upiota = 0,1$, and the trial dislocation $L$ is described by an infinitesimal crack $L=\bx_{\small \circ}\!+\bR{\sf L}$ wherein $\bR$ is a unitary rotation matrix, and~{\sf L} represents a vanishing penny-shaped crack of unit normal~$\bn_{\small \circ}\nxs:=\lbrace 1,0,0\rbrace$, so that $\textrm{\bf{n}} = \bR\bn_{\small \circ}$. On recalling~\eqref{mat2}, the three non-trivial components of $(\textrm{\bf{u}}_{\xxs\bx_{\small \circ}}^{\textrm{\bf{n}},\upiota},\textrm{{p}}_{\bx_{\small \circ}}^{\textrm{\bf{n}},\upiota})(\bxi_i)$ in \mbox{the $x-y$} plane, with orthonormal bases $(\be_1,\be_2)$, are arranged into a $3N\!\times\!1$ vector for $\bxi_i \in \mathcal{G}$ as the following
\vspace{-0.5 mm}
\beq\lb{Phi-inf-Dnum}
\bPhi_{\xxs\bx_{\small \circ}}^{\textrm{\bf{n}},\upiota}(3i+1\!:\!3i+3) ~=~ 
\! \left[\!\nxs\begin{array}{c}
\vspace{0.5 mm}
\! \textrm{\bf{u}}_{\xxs\bx_{\small \circ}}^{\textrm{\bf{n}},\upiota} \sip \exs\be_1 \!\! \\*[0.25mm]
\! \textrm{\bf{u}}_{\xxs\bx_{\small \circ}}^{\textrm{\bf{n}},\upiota}  \sip \exs\be_2 \!\! \\*[0.25mm]
\! \textrm{{p}}_{\bx_{\small \circ}}^{\textrm{\bf{n}},\upiota} \!\!
\vspace{0.5 mm}
\end{array} \!\right]\!\! (\bxi_i), \qquad i=0,\ldots N-1.
\vspace{-0.0 mm}
\eeq

\noindent \emph{Sampling.}~With reference to~Figs.~\ref{CN} and~\ref{CT1}, the search area i.e.,~the sampling region is a square $[-5,5]^2\subset P$ probed by a uniform $100 \nxs\times\nxs 100$ grid of sampling points~$\bx_{\small \circ}\!$ where the featured indicator functionals are evaluated, while the unit circle spanning possible orientations for the trial dislocation $L$ is sampled by a grid of $16$ trial normal directions $\textrm{\bf{n}}=\bR\bn_\circ$, and the excitation form -- as a fluid source or an elastic force, is selected via $\upiota$. Accordingly, the multiphasic indicator maps $\mathfrak{L}$ and $\mathfrak{G}$ are constructed through minimizing the cost functionals~\eqref{lssm1} and~\eqref{RJ-alph} for a total of $M = 10000 \nxs\times\nxs 8 \nxs \times\nxs 2$ trial triplets $(\bx_{\small \circ},\textrm{\bf{n}},\iota)$.     

 \begin{rem}
 It is worth mentioning that in the discretized equation
 \vspace{-1 mm}
\beq\lb{Dff}
\boldsymbol{\!\Lambda}^{\nxs\updelta} \, \bg^{\textrm{\bf{n}},\upiota}_{\bx_{\small \circ}}~=~\bPhi_{\xxs\bx_{\small \circ}}^{\textrm{\bf{n}},\upiota},
\vspace{-1 mm}
\eeq  
the scattering operators $\boldsymbol{\Lambda}^{\nxs\updelta}$ is independent of any particular choice of $L(\bx_{\small \circ},\textrm{\bf{n}})$ or $\upiota$.Therefore, one may replace $\bPhi_{\xxs\bx_{\small \circ}}^{\textrm{\bf{n}},\upiota}$ in~\eqref{Dff} with an assembled $3N \!\times\! M$ matrix, encompassing all variations of $L(\bx_{\small \circ},\textrm{\bf{n}})$ and $\upiota$, to solve only one equation for the reconstruction of hidden fractures which is computationally more efficient.       
 \end{rem}
 
\subsubsection*{Step 3:~computation of the multiphysics imaging functionals}\label{RRM}

\emph{$\mathfrak{L}$ map.}~With reference to Theorem~\ref{TR2},~\eqref{Dff} is solved by minimizing the regularized cost functional~\eqref{lssm1}. It is well-known, however, that the Tikhonov functional $J_{\upeta}$ is convex and its minimizer can be obtained without iteration~\cite{Kress1999}. Thus-obtained solution $\bg^{\textrm{\bf{n}},\upiota}_{\bx_{\small \circ}\nxs,\mathfrak{L}}$ to~\eqref{Dff} is a $3N\!\times\! 1$ vector (or $3N\!\times\! M$ matrix for all right-hand sides) identifying the structure of multiphasic wavefront densities on $\mathcal{G}$. On the basis of~\eqref{LSM2} and~\eqref{LSMB}, the multiphysics $\mathfrak{L}$ indicator is then computed as 
\vspace{0 mm}  
\beq\lb{LSMN}
\mathfrak{L}(\bx_{\small \circ}) \,\, := \,\, \frac{1}{\norms{\bg^{\mathfrak{L}}_{\bx_{\small \circ}}\!}_{L^2}}, \qquad
\textcolor{black}{
\bg^{\mathfrak{L}}_{\bx_{\small \circ}}\! ~ \colon \!\!\! =~ \!\! \text{arg\!}\min_{\!\!\!\bg^{\textrm{\bf{n}},\upiota}_{\bx_{\small \circ}\nxs,\mathfrak{L}}} \norms{\bg^{\textrm{\bf{n}},\upiota}_{\bx_{\small \circ},\mathfrak{L}}\nxs}^2_{L^2}.}
\vspace{-1 mm}  
\eeq

\emph{$\mathfrak{G}$ map.}~In the case where $\Im{\gamma} \to 0$, one may construct a more robust approximate solution to~\eqref{Dff} by minimizing $J_{\upalpha,\updelta}$ in~\eqref{RJ-alph}. As indicated in Section~\ref{SSA}, $J_{\upalpha,\updelta}$ is also convex and its minimizer $\bg^{\textrm{\bf{n}},\upiota}_{\bx_{\small \circ},\mathfrak{G}}$ can be computed by solving the discretized linear system~\eqref{min-RJ}. Then, with reference to~(\ref{GLSMgs}), the multiphase indicator $\mathfrak{G}$ is obtained as 
\beq\lb{GLSM-Dgs}
\mathfrak{G}(\bx_{\small \circ}) \,\, := \,\, \dfrac{1}{\sqrt{\norms{\!(\boldsymbol{\Lambda}^{\nxs\updelta}_\sharp)^{\frac{1}{2}} \exs \bg^{\mathfrak{G}}_{\bx_{\small \circ}}\! \nxs}^2 \exs+\,\, \delta \! \norms{\bg^{\mathfrak{G}}_{\bx_{\small \circ}}\! }^2}}, \qquad 
\bg^{\mathfrak{G}}_{\bx_{\small \circ}}\! ~ \colon \!\!\! =~ \!\! \text{arg\!}\min_{\!\!\!\bg^{\textrm{\bf{n}},\upiota}_{\bx_{\small \circ},\mathfrak{G}}} \norms{\bg^{\textrm{\bf{n}},\upiota}_{\bx_{\small \circ},\mathfrak{G}}\nxs}^2_{L^2}.
\eeq

\subsection{Simulation results}\label{CompR}

%

\emph{Near-field tomography of an evolving network}.~With reference to Fig.~\ref{CN}, a hydraulic fracture network ${\textstyle \bigcup\limits_{\kappa =1}^{9} \!\! \Gamma_{\!\kappa}}$ is induced in $P$ in four steps. Following each treatment, the  numerical experiments are conducted as described earlier where the multiphasic scattering patterns $(\bu\obs,p\obs)$ are obtained over an H-shaped sensing grid $\mathcal{G}$ resulting in a $3N \!\times\! 3N = 990 \!\times\! 990$ scattering matrix. The latter is then used to compute the distribution of $\mathfrak{L}$ indicator in the sampling region, at every sensing step, to recover the sequential evolution of the network as shown in the figure.    

\vspace{1.5mm}
\begin{figure}[h!]
\center\includegraphics[width=0.95\linewidth]{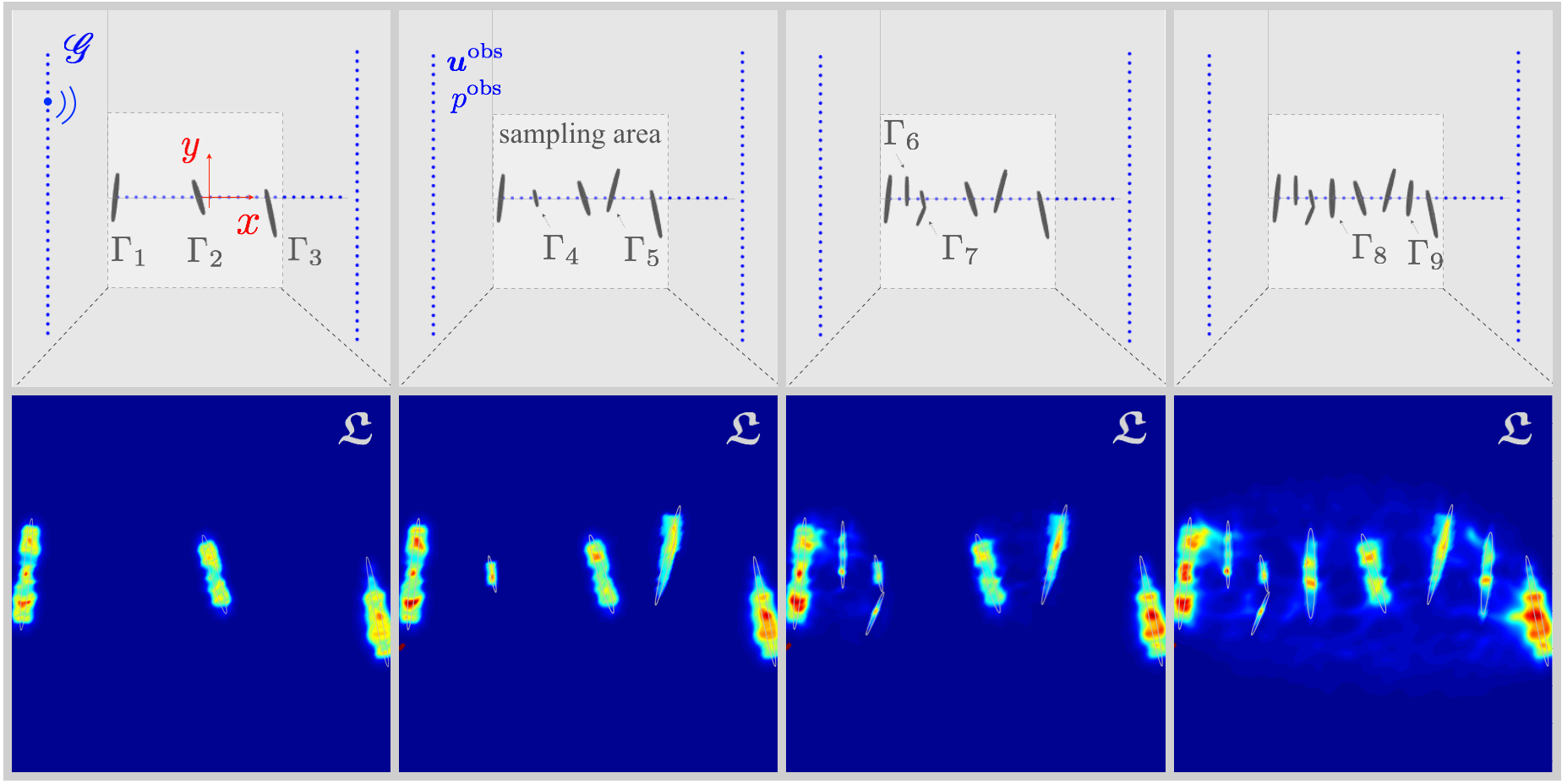} \vspace*{-3mm} 
\caption{Reconstruction of an evolving hydraulic fracture network via the multiphysics $\mathfrak{L}$ indicator:~(top row) sensing and network configurations at each treatment stage, and (bottom row) $\mathfrak{L}$ maps computed via~\eqref{LSMN} on the basis of solid displacements and pore pressures $(\bu\obs,p\obs)$ measured on an H-shaped grid $\bxi_i \in \mathcal{G}, i = \lbrace 1,2, ...,330 \rbrace$ in the injection and two monitoring wells.} \lb{CN}  \vspace*{1mm}
\end{figure} 

\begin{figure}[h!]
\center\includegraphics[width=0.95\linewidth]{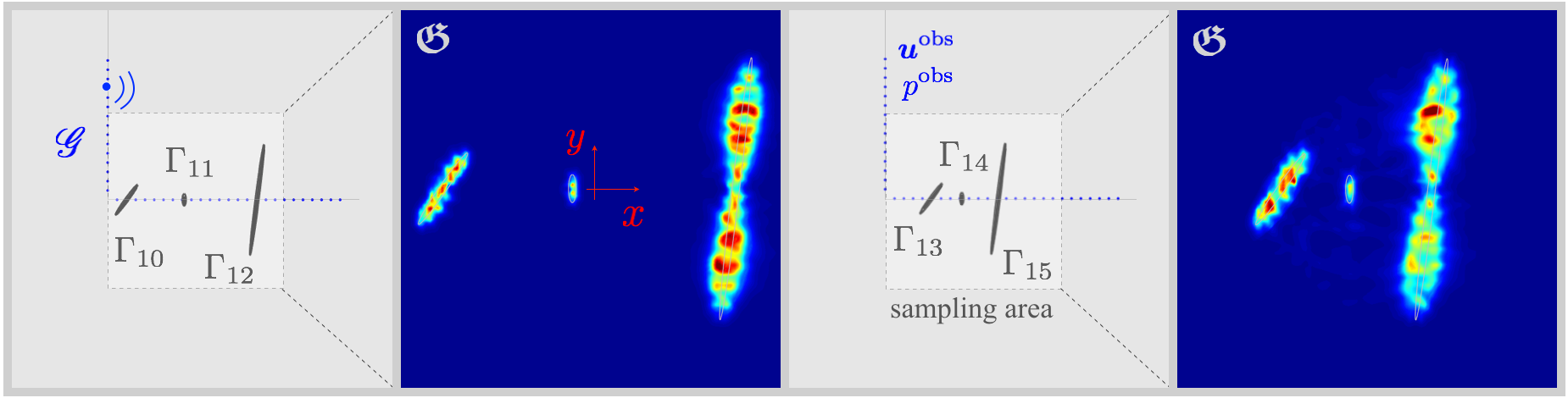} \vspace*{-3mm} 
\caption{Limited-aperture reconstruction via the multiphysics $\mathfrak{G}$ indicator:~(panels 1 and 3) sensing and network configurations, and~(panels 2 and 4)~the affiliated $\mathfrak{G}$ maps constructed via~\eqref{GLSM-Dgs} from the complete dataset i.e.,~solid displacements and pore pressures $(\bu\obs,p\obs)$ observed on an L-shaped grid $\bxi_i \in \mathcal{G}, i = \lbrace 1,2, ...,130 \rbrace$ in the injection well.} \lb{CT1}  \vspace*{1mm}
\end{figure} 

\begin{figure}[h!]
\center\includegraphics[width=0.95\linewidth]{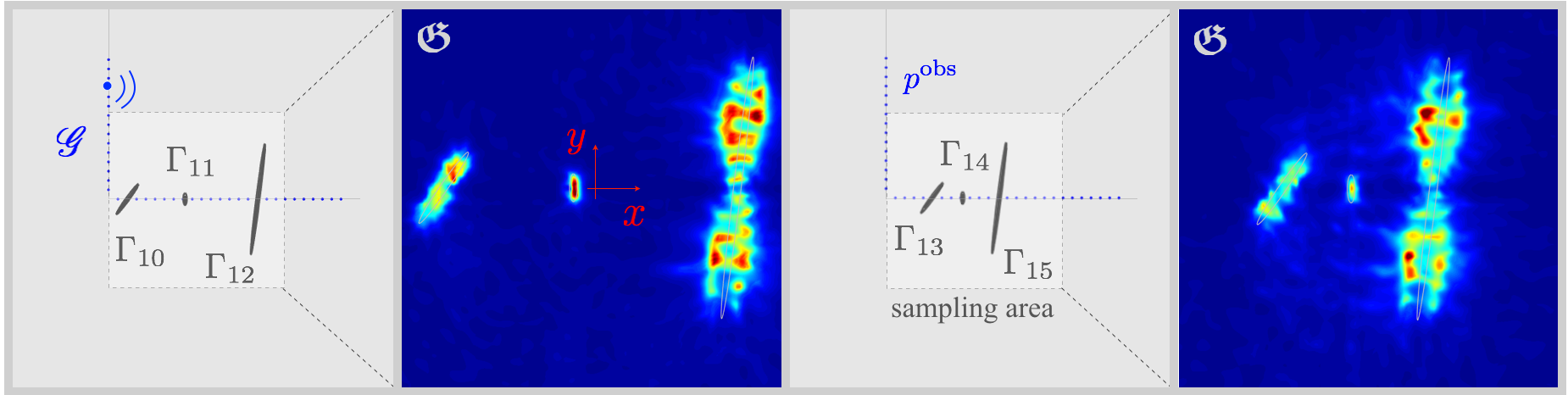} \vspace*{-3mm} 
\caption{Near-field imaging via the pore fluid:~(panels 1 and 3) sensing and network configurations similar to Fig.~\ref{CT1}, and~(panels 2 and 4)~the associated $\mathfrak{G}$ maps computed via~\eqref{GLSM-Dgs} when the excitation at every point $\bxi_i \in \mathcal{G}, i = \lbrace 1,2, ...,130 \rbrace$ is a fluid volumetric source $g_f$ and the measurements are in the form of pore pressure data $p\obs$.} \lb{CT2} \vspace*{-2mm}
\end{figure} 

\emph{Limited-aperture imaging}.~Fig.~\ref{CT1} shows the reconstructed $\mathfrak{G}$ maps via~\eqref{GLSM-Dgs} from the complete dataset, including both solid displacements and pore pressure measurements $(\bu\obs,p\obs)$, on an L-shaped grid $\mathcal{G}$ associated with the injection (or treatment) well, which results in a $3N \!\times\! 3N = 390 \!\times\! 390$ poroelastic scattering matrix. Here, both networks ${\textstyle \bigcup\limits_{\kappa =10}^{12} \!\! \Gamma_{\!\kappa}}$ and ${\textstyle \bigcup\limits_{\kappa =13}^{15} \!\! \Gamma_{\!\kappa}}$ involve hydraulic fractures of different length scales e.g.,~$\Gamma_{11}$ is of $O(1)$ while~$\Gamma_{12}$ is of $O(10)$. One may also observe the interaction between scatterers in the $\mathfrak{G}$ maps when the pair-wise distances between fractures is less than a few wavelengths. It should be mentioned that similar maps are obtained by deploying the $\mathfrak{L}$ imaging functional. However, it is interesting to note that the $\mathfrak{G}$ indicator is successful in reconstructing the fracture network even though $\gamma \in \mathbb{C}$. This may be due to the particular sensor arrangement germane to the hydraulic fracturing application where a subset of the sensing grid (within the injection well) is in fact in close proximity of scatterers $\Gamma_\kappa$, $\kappa = 1,2,...,15$.  

\emph{Acoustic imaging via the pore fluid}.~In hydrofracking, it may be convenient to generate distributed fluid volumetric excitation within the treatment well and simultaneously measure thus-induced pore pressures on the same grid. Data inversion on the basis of acoustic excitation and measurements may expedite reconstruction of the induced fractures. In this spirit,~Fig.~\ref{CT2}  illustrates the reconstruction results for the network and sensing configurations similar to~Fig.~\ref{CT1} where the excitation takes only the form of a fluid volumetric source, and the measurements are pore pressure data $p\obs$. The $\mathfrak{L}$ indicator generates very similar maps.    

%
%

\section{Conclusions} \label{Conc}

A multiphysics data inversion framework is developed for near-field tomography of hydraulic fracture networks in poroelastic backgrounds. This imaging solution is capable of spatiotemporal reconstruction of discontinuous interfaces of arbitrary (and possibly fragmented) support whose poro-mechanical properties are unknown.  The data is processed within an appropriate dimensional framework to allows for simultaneous inversion of elastic and acoustic (i.e., pore pressure) measurements, which are distinct in physics and scale. The proposed imaging indicator is~(a)~inherently non-iterative enabling fast inversion of dense data in support of real-time sensing,~(b)~flexible with respect to the sensing configuration and illumination frequency, and (c) robust against noise in data. Limited-aperture and partial data inversion -- e.g., deep-well tomography as well as imaging based solely on pore pressure excitation and measurements -- are explored. Both $\mathfrak{L}$ and $\mathfrak{G}$ indicators showed success in the numerical experiments germane to the limiting case of high-frequency illumination. It should be mentioned that this imaging modality can be naturally extended to more complex e.g., highly heterogeneous backgrounds. Given the multiphysics nature of data, a remarkable potential would be to deploy this approach within a hybrid data analytic platform to enable not only geometric reconstruction, but also interfacial characterization of discontinuity surfaces, e.g.,~involving the retrieval of permeability profile of hydraulic fractures from poroelastic data.  

\section{Acknowledgements} \label{ackn}

The corresponding author kindly acknowledges the support provided by the National Science Foundation (Grant \#1944812). We would like to mention Rezgar Shakeri's assistance with the preliminary studies on forward scattering by interfaces of infinite permeability. This work utilized resources from the University of  Colorado Boulder Research Computing Group, which is supported by the National Science Foundation (awards ACI-1532235 and ACI-1532236), the University of Colorado Boulder, and Colorado State University.

\appendix

\section{Poroelastodynamics fundamental solution}  \label{fund}

The poroelastodynamics fundamental solution~\cite{scha2012,chen2016} takes the form 
\beq\label{G}
{\text{\bf G}}(\by,\bxi) \,:=\, \left[ \!\! \begin{array}{ll}
\Usfr_{\!\!\! ij} \!\! &\!\! \uffr_{\nxs\! i} \\*[0.1mm]
\exs \psfr_{\!\!\! j} \!&\!\! \pffr \\
\end{array} \!\!\nxs \right](\by,\bxi), \qquad \by,\bxi\in\mathbb{R}^3, \quad \bxi\ne\by, \quad i,j = 1,2,3,
\eeq
where $(\bUsfr,\bpsfr) \in H^1_{\tiny\textrm{loc}}(\R^3 \nxs\setminus\nxs \lbrace\by\rbrace)^{3\times3} \nxs\times\nxs H^1_{\tiny \textrm{loc}}(\R^3 \nxs\setminus\nxs \lbrace\by\rbrace)^{3}$ satisfies
\beq\lb{Ups}
\begin{aligned}
&\nabla \sip (\bC \colon \!\nxs \nabla \bUsfr)(\by,\bxi) - (\alpha-\dfrac{\rho_f}{\gamma}) \nabla \bpsfr(\by,\bxi)  + \omega^2 (\rho-\dfrac{\rho_f^2}{\gamma}) \bUsfr(\by,\bxi)\,=\, - \exs \delta(\by-\bxi) \boldsymbol{I}_{3\times3},    \\*[0.5mm]
&\dfrac{1}{\gamma\omega^2} \nabla^2\bpsfr(\by,\bxi) + {M}^{-1} \bpsfr(\by,\bxi) + (\alpha-\dfrac{\rho_f}{\gamma}) \nabla \sip \bUsfr(\by,\bxi)  \,=\, \bzero,   \quad  \bxi \in \R^3 \nxs\setminus\nxs \lbrace\by\rbrace, \, \bxi \neq \by \in \R^3, 
\end{aligned}   
\eeq  
while $(\buffr,\pffr) \in H^1_{\tiny\textrm{loc}}(\R^3 \nxs\setminus\nxs \lbrace\by\rbrace)^{3} \nxs\times\nxs H^1_{\tiny \textrm{loc}}(\R^3 \nxs\setminus\nxs \lbrace\by\rbrace)$ solves
\beq\lb{upf}
\begin{aligned}
&\nabla \sip (\bC \colon \!\nxs \nabla \buffr)(\by,\bxi) - (\alpha-\dfrac{\rho_f}{\gamma}) \nabla \pffr(\by,\bxi)  + \omega^2 (\rho-\dfrac{\rho_f^2}{\gamma}) \buffr(\by,\bxi)\,=\, \bzero,    \\*[0.5mm]
&\dfrac{1}{\gamma\omega^2} \nabla^2\pffr(\by,\bxi) + {M}^{-1} \pffr(\by,\bxi) + (\alpha-\dfrac{\rho_f}{\gamma}) \nabla \sip \buffr(\by,\bxi)  \,=\, - \exs \delta(\by-\bxi),   \quad  \bxi \in \R^3 \nxs\setminus\nxs \lbrace\by\rbrace, \, \bxi \neq \by \in \R^3, 
\end{aligned}   
\eeq  
both subject to the relevant radiation conditions similar to~\eqref{KS}. On setting
\[
\begin{aligned}
& r \,:=\, |\by-\bxi|, \quad G(k_\varsigma,r) \,:=\, \frac{e^{\text{i}k_\varsigma r}}{4\pi r}, \,\, \varsigma \,=\, \textrm{s}, \textrm{p}_1, \textrm{p}_2, \quad [f]_{,i} \,:=\, \frac{\partial f}{\partial \xi_i}, \,\, i \,=\, 1,2,3, \\*[1mm]
& \qquad \quad \textrm{A}_{1} \,:=\, \frac{k_{\textrm{p}_2\!}^2 (k_{\textrm{p}_1\!}^2 M - \gamma \omega^2)}{\gamma\omega^2(k_{\textrm{p}_1\!}^2-k_{\textrm{p}_2\!}^2 \exs)}, \quad \textrm{A}_{2} \,:=\, \frac{k_{\textrm{p}_1\!}^2 (k_{\textrm{p}_2\!}^2 M - \gamma \omega^2)}{\gamma\omega^2(k_{\textrm{p}_2\!}^2-k_{\textrm{p}_1\!}^2 \exs)},
\end{aligned}
\]
$(\bUsfr,\bpsfr)$ permits the explicit form
\beq\lb{UpsS}
\begin{aligned}
& \Usfr_{\!\!\! ij} \,=\, \dfrac{\gamma}{\omega^2(\rho\gamma-\rho_f^2)} \Big{[} \Big{(} G(k_{\textrm{s}},r) - \textrm{A}_{1} G(k_{\textrm{p}_1\!},r) - \textrm{A}_{2} G(k_{\textrm{p}_2\!},r) \Big{)}_{\!,ij} \,+\, \delta_{ij} k_{\textrm{s}}^2 G(k_{\textrm{s}},r) \Big{]},  \\*[0.5mm]
& \exs \psfr_{\!\!\! j} \,=\, \frac{\omega^2(\alpha\gamma-\rho_f)}{(\lambda+2\mu)(k_{\textrm{p}_1\!}^2-k_{\textrm{p}_2\!}^2 \exs)} \Big{(} G(k_{\textrm{p}_1\!},r) - G(k_{\textrm{p}_2\!},r) \Big{)}_{\!,j}, \quad i,j = 1,2,3,
\end{aligned} 
\eeq
and $(\buffr,\pffr)$ reads 
\beq\lb{upfS}
\begin{aligned}
& \buffr \,=\, - \bpsfr,  \qquad \pffr \,=\, - \frac{\gamma\omega^2}{k_{\textrm{p}_2\!}^2-k_{\textrm{p}_1\!}^2}\Bigg[ \Big{(} k_{\textrm{p}_1\!}^2 - \frac{\omega^2 (\rho \gamma -\rho_f^2)}{\gamma(\lambda+2\mu)} \Big{)} G(k_{\textrm{p}_1\!},r) - \Big{(} k_{\textrm{p}_2\!}^2 - \frac{\omega^2 (\rho \gamma -\rho_f^2)}{\gamma(\lambda+2\mu)} \Big{)} G(k_{\textrm{p}_2\!},r) \Bigg], 
\end{aligned} 
\eeq
In this setting, the associated fundamental tractions $\bTsfr$ and $\btffr$ on a surface $\Gamma \subset \R^3 \nxs\setminus\nxs \lbrace\by\rbrace$ characterized by the unit normal vector $\bn$ may be specified as
\beq\lb{Ts}
\begin{aligned}
& \Tsfr_{\!\!\! ij}(\by,\bxi) \,=\, n_{k}(\bxi) C_{ik\ell\varsigma}  \Usfr_{\!\!\! \ell j,\varsigma}(\by,\bxi) - \alpha n_i(\bxi) \psfr_{\!\!\! j}(\by,\bxi),   \\*[1mm]
& \exs \tffr_{\! i}(\by,\bxi) \,=\, n_{k}(\bxi) C_{ik\ell\varsigma}  \uffr_{\nxs\!  \ell,\varsigma} (\by,\bxi) - \alpha n_i(\bxi) \pffr(\by,\bxi),  \quad\,\,  \bxi \in \Gamma, \,  \by \in \R^3 \nxs\setminus\nxs \overline{\Gamma}, 
\end{aligned} 
\eeq
where Einstein's summation convention applies over the repeated indexes. In addition, the normal relative fluid-solid displacements $\bqsfr$ and $\qffr$ across $\Gamma$ may be expressed as  
\beq\lb{Qs}
\begin{aligned}
& \qsfr_{\!\!\! j}(\by,\bxi) \,=\, \dfrac{1}{\gamma\omega^2}\Big{(}n_i(\bxi) \psfr_{\!\!\! j,i}(\by,\bxi) \,-\, \rho_f \omega^2 n_i(\bxi)\Usfr_{\!\!\! ij}(\by,\bxi)\Big{)},   \\*[1mm]
& \exs \qffr(\by,\bxi) \,=\, \dfrac{1}{\gamma\omega^2}\Big{(}n_i(\bxi) \pffr_{\!\!\! ,i}(\by,\bxi) \,-\, \rho_f \omega^2 n_i(\bxi)\uffr_{\! i}(\by,\bxi)\Big{)},  \quad\,\,  \bxi \in \Gamma, \,  \by \in \R^3 \nxs\setminus\nxs \overline{\Gamma}.
\end{aligned} 
\eeq

\section{Wellposedness of the poroelastic scattering problem}\label{Wellp}

Given $(\bt\ff,q\ff,p\ff) \in H^{-1/2}(\Gamma)^3 \times H^{-1/2}(\Gamma) \times H^{1/2}(\Gamma)$ and the interface operator $\mathfrak{P}: \tilde{H}^{{1}/{2}}(\Gamma)^3\times \tilde{H}^{{1}/{2}}(\Gamma) \times \tilde{H}^{-{1}/{2}}(\Gamma) \rightarrow {H}^{-{1}/{2}}(\Gamma)^3\times {H}^{-{1}/{2}}(\Gamma) \times {H}^{{1}/{2}}(\Gamma)$ satisfying~\eqref{ImP}, then the direct scattering problem~\eqref{GE}-\eqref{KS} has a unique solution that continuously depends on $(\bt\ff,q\ff,p\ff)$. Note that $q\ff$ is continuously dependent on $(\bu\ff \sip \bn,\nabla p\ff \sip \bn)$ according to~\eqref{Neu}. First, observe that~\eqref{GE}-\eqref{KS} may be expressed on $\mathcal{B}\setminus\nxs\overline{\Gamma}$ in the variational form in terms of $(\bu,p) \in H^1_{\tiny\textrm{loc}}({\R^3}\nxs\setminus\nxs \overline{\Gamma})^3 \nxs\times\nxs H^1_{\tiny\textrm{loc}}({\R^3}\nxs\setminus\nxs \overline{\Gamma})$ so that $\forall (\texttt{\bf u}',\mathtt{p}')  \in H^1_{\tiny\textrm{loc}}({\R^3}\nxs\setminus\nxs \overline{\Gamma})^3 \nxs\times\nxs H^1_{\tiny\textrm{loc}}({\R^3}\nxs\setminus\nxs \overline{\Gamma})$ satisfying the radiation condition~\eqref{KS},
\beq\lb{Wik-GE}
\begin{aligned}
&\Big\langle (\bt\ff + \tilde{\alpha}_{\textrm{\tiny f}} \exs p\ff \exs \bn, \, q\ff, \, p\ff), (\dbr{{\butt}'}, \dbr{{\ptt}'}, \vartheta_{\textrm{\tiny f}} \llangle{\xxs {\ptt}'}\rrangle) \Big\rangle_{\Gamma}  \,=\, \\*[2.75 mm]
&\int_{\mathcal{B}\setminus\nxs\Gamma} \big\lbrace \nabla \bar{\butt}' \exs \colon \! \bC \exs \colon \! \nabla \bu \,+\, \frac{1}{\gamma\omega^2} \nabla\bar{\ptt}' \sip \nabla p    \,-\, (\alpha - \frac{\rho_f}{\gamma}) \exs (\nabla \sip \bar{\butt}' \xxs p  \exs + \exs  \bar{\ptt}' \nabla \sip \bu)  \big\rbrace \, \text{d}V_{\bxi} ~-  \\*[1.0 mm]
& \int_{\mathcal{B}\setminus\nxs\Gamma} \Big\lbrace \omega^2 (\rho - \frac{\rho_f^2}{\gamma}) \exs \bar{\butt}' \nxs\sip\exs \bu + M^{-1} \bar{\ptt}' p \Big\rbrace  \, \text{d}V_{\bxi} \,+\, \frac{\rho_f}{\gamma} \! \int_{\Gamma}  \Big( \llangle{\xxs p }\rrangle \dbr{\xxs\bar{\butt}'}   + \dbr{\xxs p \xxs} \llangle{\xxs \bar{\butt}' }\rrangle   +  \llangle{\xxs \bar{\ptt}' }\rrangle \dbr{\bu}  + \dbr{\xxs\bar{\ptt}'} \llangle{\bu }\rrangle   \Big) \sip \bn \, \text{d}S_{\bxi} \,\,-  \\*[1.875 mm]
&  \Big\langle \big(-\dbr{\bu} \sip \bK \nxs+ \tilde{\alpha}_{\textrm{\tiny f}} \exs  \llangle{\xxs p }\rrangle \bn, - \dfrac{\varkappa_{\textrm{\tiny f}}}{\textrm{i}\omega\Pi} \dbr{\xxs p}, \llangle{\xxs p }\rrangle + \frac{{\beta}_{\textrm{\tiny f}}}{1-  \tilde{\alpha}_{\textrm{\tiny f}}{\beta}_{\textrm{\tiny f}}} \bn \sip \bK \dbr{\bu}\big), \big(\dbr{{\butt}'}, \dbr{{\ptt}'}, \vartheta_{\textrm{\tiny f}} \llangle{\xxs {\ptt}'}\rrangle\big) \Big\rangle_{\Gamma} \,\,- \\*[2.75 mm]
&  \int_{\partial \mathcal{B}} \Big\lbrace \bar{\butt}' \sip \exs \bt \,+\, \frac{1}{\gamma \omega^2} \bar{\ptt}' \exs \nabla p \sip \bn \,+\, \frac{\rho_f}{\gamma}  \bar{\butt}' \sip \bn \exs p \Big\rbrace  \, \text{d}S_{\bxi},
\end{aligned}
\eeq
where $\vartheta_{\textrm{\tiny f}} \in L^{\infty}(\Gamma)$ is defined by
\[
{\vartheta}_{\textrm{\tiny f}} ~:=~ \overline{\frac{\Pi \alpha_{\textrm{\tiny f}}}{\textrm{k}_n {\beta}_{\textrm{\tiny f}}} ({1-  \tilde{\alpha}_{\textrm{\tiny f}}{\beta}_{\textrm{\tiny f}}})} \qquad \text{on} \,\,\,\, \Gamma,
\]
with the `bar' indicating complex conjugate. Note that in light of~\eqref{KS}, one has 
\[
\lim\limits_{R\rightarrow \infty} \int_{\partial \mathcal{B}} \exs \Big\lbrace \bar{\butt}' \sip \exs \bt \,+\, \frac{1}{\gamma \omega^2} \bar{\ptt}' \exs \nabla p \sip \bn \,+\, \frac{\rho_f}{\gamma}  \bar{\butt}' \sip \bn \exs p \Big\rbrace  \, \text{d}S_{\bxi} ~=~ 0.
\]
It is worth mentioning that~\eqref{Wik-GE} is obtained by premultiplying the first of~\eqref{GE} by $\bar{\texttt{\bf u}}'$ and its second by $\bar{\mathtt{p}}'$; the results are then added and integrated by parts over $\mathcal{B}\setminus\nxs\overline{\Gamma}$. The duality pairing $\dualGA{\cdot}{\cdot}$ on the left hand side of~\eqref{Wik-GE} is continuous with respect to $\bt\ff$, $q\ff$, and $p\ff$ owing to the continuity of the trace mapping $({\butt}', {\ptt}') \to (\dbr{{\butt}'},\dbr{\xxs{\ptt}'}, \llangle \xxs \ptt' \txs \rrangle)$ from $H^1_{\tiny\textrm{loc}}({\R^3}\nxs\setminus\nxs \overline{\Gamma})^3 \nxs\times\nxs H^1_{\tiny\textrm{loc}}({\R^3}\nxs\setminus\nxs \overline{\Gamma})$ into $\tilde{H}^{1/2}(\Gamma)^3 \times \tilde{H}^{1/2}(\Gamma) \times {H}^{1/2}(\Gamma)$. Now let us define $\forall (\texttt{\bf u}',\mathtt{p}')  \in H^1_{\tiny\textrm{loc}}({\R^3}\nxs\setminus\nxs \overline{\Gamma})^3 \nxs\times\nxs H^1_{\tiny\textrm{loc}}({\R^3}\nxs\setminus\nxs \overline{\Gamma})$ subject to~\eqref{KS}, the coercive operator 
\beq\lb{cv}
\begin{aligned} 
& \textrm{A}\big((\bu,p),({\butt}', {\ptt}')\big) ~:=\, \int_{\mathcal{B}\setminus\nxs\Gamma} \big\lbrace \nabla \bar{\butt}' \exs \colon \! \bC \exs \colon \! \nabla \bu \,+\, \frac{1}{\gamma\omega^2} \nabla\bar{\ptt}' \sip \nabla p    \,\,- \\*[1 mm] 
&\hspace{4.2cm} (\alpha - \frac{\rho_f}{\gamma}) \exs (\nabla \sip \bar{\butt}' \xxs p  \exs + \exs \bar{\ptt}' \nabla \sip \bu) \,+\, \bar{\ptt}' p \,+\,   \frac{\rho_f^2}{\gamma} \exs \omega^2 \exs \bar{\butt}' \nxs\sip\exs \bu   \big\rbrace \, \text{d}V_{\bxi}.
\end{aligned}
\eeq
On invoking $\mu_r = \lambda'$ i.e., $\lambda = 1$ from Section~\ref{DiP} and recalling from~\eqref{gam} that $|\alpha - {\rho_f}/{\gamma}|<1$, one may observe 
\beq\lb{cv2}
\begin{aligned} 
&\mathfrak{R}(\textrm{A})((\bu,p),(\bu,p)) \,\geqslant\,\,  \int_{\mathcal{B}\setminus\nxs\Gamma} \big\lbrace \big(\lambda- |\alpha - \frac{\rho_f}{\gamma}|\big)\! \norms{\nxs \nabla \sip \bu \nxs}^2  +\,\, \mu \norms{\nxs\nabla \bu\nxs}^2  +\,\, \frac{\mathfrak{R}\gamma}{\gamma^2\omega^2}  \norms{\nxs\nabla p\nxs}^2 \,+\,  \\*[1 mm] 
& \hspace{2.6cm} \big(1- |\alpha - \frac{\rho_f}{\gamma}|\big) \norms{p}^2 +\,\, \frac{\rho_f^2}{\gamma^2} \exs \omega^2 \mathfrak{R}\gamma \nxs\norms{\nxs\bu\nxs}^2 \!\big\rbrace \, \text{d}V_{\bxi} \,\geqslant\,\, \textrm{c} \exs \big(\!\norms{\nxs\bu\nxs}^2_{H^1(\mathcal{B}\setminus\nxs \Gamma)^3} + \norms{p}^2_{H^1(\mathcal{B}\setminus\nxs \Gamma)}\!\big),
\end{aligned}
\eeq
for a constant $\textrm{c}>0$ independent of $(\bu,p)$. Next, define $\forall (\texttt{\bf u}',\mathtt{p}')  \in H^1_{\tiny\textrm{loc}}({\R^3}\nxs\setminus\nxs \overline{\Gamma})^3 \nxs\times\nxs H^1_{\tiny\textrm{loc}}({\R^3}\nxs\setminus\nxs \overline{\Gamma})$ subject to~\eqref{KS}, the compact operator
\beq\lb{compact}
\begin{aligned} 
& \textrm{B}\big((\bu,p),({\butt}', {\ptt}')\big) ~=\, - \big\langle \big(-\dbr{\bu} \sip \bK \nxs+ \tilde{\alpha}_{\textrm{\tiny f}} \exs  \llangle{\xxs p }\rrangle \bn, - \dfrac{\varkappa_{\textrm{\tiny f}}}{\textrm{i}\omega\Pi} \dbr{\xxs p}, \llangle{\xxs p }\rrangle + \frac{{\beta}_{\textrm{\tiny f}}}{1-  \tilde{\alpha}_{\textrm{\tiny f}}{\beta}_{\textrm{\tiny f}}} \bn \sip \bK \dbr{\bu}\big), \big(\dbr{{\butt}'}, \dbr{{\ptt}'}, \vartheta_{\textrm{\tiny f}} \llangle{\xxs {\ptt}'}\rrangle\big) \big\rangle_{\Gamma} \,\, - \\*[2 mm]
&\qquad\quad  \int_{\mathcal{B}\setminus\Gamma} \big\lbrace \rho \exs \omega^2  \bar{\butt}' \nxs\sip\exs \bu \,+\, (M^{-1}+1) \xxs \bar{\ptt}' p \big\rbrace  \, \text{d}V_{\bxi} \,+\, \frac{\rho_f}{\gamma} \! \int_{\Gamma}  \big( \llangle{\xxs p }\rrangle \dbr{\xxs\bar{\butt}'}   + \dbr{\xxs p \xxs} \llangle{\xxs \bar{\butt}' }\rrangle   +  \llangle{\xxs \bar{\ptt}' }\rrangle \dbr{\bu}  + \dbr{\xxs\bar{\ptt}'} \llangle{\bu }\rrangle   \big) \sip \bn \, \text{d}S_{\bxi}, 
\end{aligned}
\eeq
where
\[
\begin{aligned} 
& \big| \textrm{B}\big((\bu,p),({\butt}', {\ptt}')\big) \big| \leqslant\, \text{c} \exs \Big\lbrace \nxs
\norms{\!\bu\!}_{L^2(\mathcal{B}\setminus\Gamma)^3} \, \norms{\nxs\butt'\!}_{L^2(\mathcal{B}\setminus\Gamma)^3} \nxs+ \norms{\nxs p\nxs}_{L^2(\mathcal{B}\setminus\Gamma)} \, \norms{\nxs\ptt'\!}_{L^2(\mathcal{B}\setminus\Gamma)} +  \\*[1 mm]
&\quad\, \norms{\!\dbr{\bu}\!}_{L^2(\Gamma)^3} \, \norms{\!\dbr{\butt'}\!}_{L^2(\Gamma)^3} \nxs+ \norms{\! \dbr{\exs p \exs}\!}_{L^2(\Gamma)} \, \norms{\!\dbr{\xxs \ptt'}\!}_{L^2(\Gamma)} \nxs+ \norms{\! \llangle{\exs p \xxs}\rrangle\!}_{L^2(\Gamma)} \, \norms{\!\llangle{\xxs \ptt'}\rrangle\!}_{L^2(\Gamma)} \nxs+ \\*[2 mm]
&\quad\,  \norms{\! \llangle{\xxs p }\rrangle\!}_{L^2(\Gamma)} \exs \norms{\!\dbr{\butt'}\!}_{L^2(\Gamma)^3} \nxs+ \norms{\! \dbr{\xxs p }\!}_{L^2(\Gamma)} \exs \norms{\!\llangle{\xxs \bar{\butt}' }\rrangle\!}_{L^2(\Gamma)^3} \nxs+ \norms{\!\dbr{\bu}\!}_{L^2(\Gamma)^3} \exs \norms{\! \llangle{\xxs \ptt' }\rrangle\!}_{L^2(\Gamma)} \nxs+ \norms{\!\llangle{\xxs \bu }\rrangle\!}_{L^2(\Gamma)^3} \exs \norms{\! \dbr{\xxs \ptt' }\!}_{L^2(\Gamma)}  \!   
\! \Big\rbrace, 
\end{aligned}
\]
for a constant $\text{c}$ independent of $(\bu,p)$ and $({\butt}', {\ptt}')$. Then, the compact embedding of $H^1(\mathcal{B}\setminus\nxs\overline{\Gamma})$ into $L^2(\mathcal{B}\setminus\nxs\overline{\Gamma})$ and the compactness of the trace operator indicates that $\text{B}$ defines a compact perturbation of $\textrm{A}\big((\bu,p),({\butt}', {\ptt}')\big)$.
Therefore, on letting $R\to\infty$, one may observe that the problem \eqref{Wik-GE} is of Fredholm type, and thus, is well-posed as soon as the uniqueness of a solution is guaranteed. Assume that $(\bt\ff, q\ff, p\ff) = \bzero$. Then, on invoking~\eqref{gam} and~\eqref{frakP}, one may show that when $R\to\infty$, 
\beq\lb{ImWik-GE}
-\frac{\omega}{\kappa} \nxs \norms{\nxs \bq \nxs}^2_{L^2(\R^3 \setminus\nxs\Gamma)^3} +~ \Im \exs \big\langle \mathfrak{P}\big(\dbr{\bu},\dbr{\exs {p} \xxs}, - \dbr{q} \big), \big(\dbr{\bu},\dbr{\exs {p} \xxs}, - \dbr{q} \big) \big\rangle_{\Gamma}  \,=\, 0, 
\eeq
where
\beq\lb{qp}
\bq \,\,:=\,\, \dfrac{1}{\gamma\omega^2}\big(\nabla p \,- \rho_f \omega^2 \bu \xxs \big) \qquad \text{in}\,\,\,  {\R^3}\nxs\setminus\nxs\overline{\Gamma}.
\eeq
It should be mentioned that~\eqref{ImWik-GE} is obtained by (a)~premultiplying the first of~\eqref{GE} by $\bar{\bu} \in H^1_{\tiny\textrm{loc}}({\R^3}\nxs\setminus\nxs \overline{\Gamma})^3$,~(b) conjugating the second of~\eqref{GE} and multiplying the results by $-{p} \in H^1_{\tiny\textrm{loc}}({\R^3}\nxs\setminus\nxs \overline{\Gamma})$, and~(c) integrating by parts over ${\mathcal{B}}\nxs\setminus\nxs\overline{\Gamma}$ and adding the results. On letting $R\to\infty$, one finds~\eqref{ImWik-GE} indicating that $\bq = \bzero$ in $\R^3 \nxs\setminus\nxs\overline{\Gamma}$ by the premise of Remark~\ref{R_Wellp}. As a result, the first equation in~\eqref{GE} will be reduced to the Navier equation that is subject to the (exponentially decaying) poroelastodynamics radiation conditions~\eqref{KS} which implies that $\bu = \bzero$ in $\R^3\nxs\setminus\nxs\overline{\Gamma}$. The second equation in~\eqref{GE} then reads $p = 0$ in $\R^3\nxs\setminus\nxs\overline{\Gamma}$ which completes the proof for the uniqueness of a scattering solution, and thus, substantiates the wellposedness of the forward problem.

\section{Proof of Lemma~\ref{H*}}  \label{H*pruf}

Let us define
\beq\lb{SigZet}\nonumber
\begin{aligned}
&\boldsymbol{\sigma}^{\iota} ~:=~ \bC \exs \colon \! \nabla \bu^{\iota} \,-\, \alpha \exs p^{\iota} \bI_{3\times 3},  \quad\,\,\,\,\,  \zeta^{\iota} ~:=~ M^{-1} p^{\iota} \,+\, \alpha \nabla \exs\sip\exs \bu^{\iota} \,\,\,  &\text{in}\,\,\,  {\R^3}\nxs\setminus\nxs\overline{\Gamma},\\*[1 mm]
& \boldsymbol{\sigma}_{\nxs\ba} ~:=~ \bC \exs \colon \! \nabla \bu_{\ba} \,-\, \alpha \exs p_{\ba} \bI_{3\times 3}, \quad \zeta_{\exs\ba} ~:=~ M^{-1} p_{\ba} \,+\, \alpha \nabla \exs\sip\exs \bu_{\ba}   \,\,\,  &\text{in}\,\,\,  {\R^3}\nxs\setminus\nxs\overline{\Gamma},  
\end{aligned}
\eeq
where $(\bu\ff,p\ff) \in H^1_{\tiny\textrm{loc}}({\R^3}\nxs\setminus\nxs\overline{\mathcal{G}})^3 \nxs\times\nxs H^1_{\tiny\textrm{loc}}({\R^3}\nxs\setminus\nxs\overline{\mathcal{G}})$ and $(\bu_{\ba},p_{\ba}) \in H^1_{\tiny\textrm{loc}}({\R^3}\nxs\setminus\nxs \overline{\Gamma})^3 \nxs\times\nxs H^1_{\tiny\textrm{loc}}({\R^3}\nxs\setminus\nxs \overline{\Gamma})$ solve~\eqref{uf} and~\eqref{up_a} respectively. Then, the poroelastic reciprocal theorem~\cite{scha2012} reads 
\beq\lb{recip}
\nabla \bar{\bu}_{\ba} \colon \nxs \boldsymbol{\sigma}^{\iota}\nxs \,+\, \bar{\zeta}_{\exs\ba} \exs p\ff ~=~ \bar{\boldsymbol{\sigma}}_{\ba} \colon \! \nabla \bu^{\iota}\nxs \,+\, \bar{p}_{\ba} \exs \zeta^{\iota}. 
\eeq
Now, on setting 
\beq\lb{Vq}
\bq\ff \,\,:=\,\, \dfrac{1}{\gamma\omega^2}\big(\nabla p\ff \,-\, \rho_f \omega^2 \bu\ff\big), \quad \bq_{\ba} \,\,:=\,\, \dfrac{1}{\bar{\gamma}\omega^2}\big(\nabla p_{\ba} \,-\, \rho_f \omega^2 \bu_{\ba}\big) \quad \text{in}\,\,\,  {\R^3}\nxs\setminus\nxs\overline{\Gamma}, 
\eeq
observe that
\[
\begin{aligned}
&\int_{\R^3\setminus\nxs\Gamma} \big\lbrace \nabla \bar{\bu}_{\ba} \colon \nxs \boldsymbol{\sigma}^{\iota}\nxs \,+\, \bar{\zeta}_{\exs\ba} \exs p\ff \big\rbrace \, \text{d}V_{\bxi} ~:=~ \! - \int_{\Gamma} \big\lbrace \dbr{\bar{\bu}_{\ba}} \nxs \sip \exs \bt\ff \,-\, \dbr{\exs\bar{q}_{\ba}} \exs p\ff \big\rbrace\, \text{d}S_{\bxi} \,+ \int_{\mathcal{G}} \bar{\bu}_{\ba} \sip \bg_s \, \text{d}S_{\bxi} \,\,+ \\*[1 mm]
& \quad \qquad \qquad \rho\omega^2 \!\! \int_{\R^3\setminus\nxs\Gamma} \bar{\bu}_{\ba} \nxs\sip\exs \bu\ff  \, \text{d}V_{\bxi} \,+\, \rho_f\omega^2 \!\! \int_{\R^3\setminus\nxs\Gamma} \big\lbrace \bar{\bu}_{\ba} \nxs\sip\exs \bq\ff \,+\, \bar{\bq}_{\ba} \nxs\sip\exs \bu\ff \big\rbrace \, \text{d}V_{\bxi} \,+\, \gamma\omega^2 \!\! \int_{\R^3\setminus\nxs\Gamma} \bar{\bq}_{\ba} \nxs\sip\exs \bq\ff  \, \text{d}V_{\bxi}, \\*[2 mm]
&\int_{\R^3\setminus\nxs\Gamma} \big\lbrace \bar{\boldsymbol{\sigma}}_{\ba} \colon \! \nabla \bu^{\iota}\nxs \,+\, \bar{p}_{\ba} \exs \zeta^{\iota} \big\rbrace \, \text{d}V_{\bxi} ~:=~ \! \int_{\Gamma}  \dbr{\exs \bar{p}_{\ba}} \exs q\ff \, \text{d}S_{\bxi} \,- \int_{\mathcal{G}} \bar{p}_{\ba} \sip \bg_f \, \text{d}S_{\bxi} \,\,+ \\*[1 mm]
& \quad \qquad \qquad \rho\omega^2 \!\! \int_{\R^3\setminus\nxs\Gamma} \bar{\bu}_{\ba} \nxs\sip\exs \bu\ff  \, \text{d}V_{\bxi} \,+\, \rho_f\omega^2 \!\! \int_{\R^3\setminus\nxs\Gamma} \big\lbrace \bar{\bu}_{\ba} \nxs\sip\exs \bq\ff \,+\, \bar{\bq}_{\ba} \nxs\sip\exs \bu\ff \big\rbrace \, \text{d}V_{\bxi} \,+\, \gamma\omega^2 \!\! \int_{\R^3\setminus\nxs\Gamma} \bar{\bq}_{\ba} \nxs\sip\exs \bq\ff  \, \text{d}V_{\bxi}.
\end{aligned}
\]

Then, by invoking~\eqref{oH},~\eqref{Hstar}, and~\eqref{up_a}, it is evident that
\[
\begin{aligned}
& \big\langle \mathcal{S}(\bg),\ba \big\rangle_{\Gamma} ~=~ \big(\bg,\mathcal{S}^*(\ba)\big)_{L^2(\mathcal{G})},
 \end{aligned}
\]
which completes the proof.

\section{Proof of Lemma~\ref{H*p}}  \label{S*pruf}

Observe that $\mathcal{S}^*$ of Lemma~\ref{H*} possesses the integral form
\beq\lb{Dlp}
\begin{aligned}
\mathcal{S}^*(\ba) ~=~ \int_{\Gamma}   \bar{\bSig}(\bxi,\by) \cdot \ba(\by)  \, \text{d}S_{\by}, \qquad \bSig \,=\, \left[ \!\! \begin{array}{lll}
\\*[-5.85mm]
\bTsfr \!\! &\!\! \bqsfr \!\! &\! \bpsfr \\*[0mm]
 \btffr \!\! &\!\! \qffr \!\! &\!  \pffr
\end{array} \!\!\nxs \right], \qquad  \bxi \in \mathcal{G}.
\end{aligned}
\eeq

Note that the kernel ${\bSig}(\bxi,\by)$ is derived from the poroelastodynamics fundamental solution of~\ref{fund} which also appeared in the integral representation~\eqref{vinf2}. In this setting, the smoothness of $\bar{\bSig}$ substantiates the compactness of $\mathcal{S}^*$ as an application from $\tilde{H}^{1/2}(\Gamma)^3 \nxs\times\nxs \tilde{H}^{1/2}(\Gamma) \nxs\times\nxs \tilde{H}^{-1/2}(\Gamma)$ into $L^2(\OOd)^3 \nxs\times\nxs L^2(\mathcal{G})$. Next, suppose that there exists $\ba \in \tilde{H}^{1/2}(\Gamma)^3 \nxs\times\nxs \tilde{H}^{1/2}(\Gamma) \nxs\times\nxs \tilde{H}^{-1/2}(\Gamma)$ such that $\mathcal{S^*}(\ba) = \bzero$. Then, the vanishing trace of $(\bu_{\ba}, p_{\ba})$ on $\mathcal{G}$ implies, by the unique continuation principle, that $(\bu_{\ba}, p_{\ba}) = \bzero$ in $\R^3 \backslash\nxs \overline{\Gamma}$, and whereby $(\dbr{\bu_{\ba}},\dbr{p_{\ba}},-\dbr{q_{\ba}}) = \ba = \bzero$ which guarantees the injectivity of $\mathcal{S}^*$. The densness of the range of~$\mathcal{S}^*$ is established via the injectivity of $\mathcal{S}$. To observe the latter, let $(\bg_s,g_f) \in L^2(\mathcal{G})^3\nxs\times\nxs L^2(\mathcal{G})$ such that $\mathcal{S}(\bg_s,g_f) = \bzero$. The definition~\eqref{oH} then reads $(\bn \exs\sip\exs \bC \colon\! \nabla \bu\ff,p\ff) = \bzero$ on $\Gamma$ with $(\bu\ff,p\ff)$ satisfying~\eqref{uf}. Now, assume that $\exs\Gamma$ contains $M\!\geqslant\!1$ (possibly disjoint) analytic surfaces~$\Gamma_m\!\subset\,\Gamma$, $m=1,\ldots M$, and consider the unique analytic continuation $\partial D_m$ of $\:\Gamma_m$ identifying an ``interior'' domain~$D_m\!\subset\mathcal{B}$. In this setting, $(\bn \exs\sip\exs \bC \colon\! \nabla \bu\ff,p\ff) = \bzero$ also on $\Gamma_m$ which thanks to the analyticity of $\bn \exs\sip\exs \bC \colon\! \nabla \bu\ff$ with respect to the surface coordinates on $\partial D_m$ leads to $(\bn \exs\sip\exs \bC \colon\! \nabla \bu\ff,p\ff) = \bzero$ on $\partial D_m$. Keeping in mind that $\omega \in \R$, observe that $\omega$ may not be an eigenfrequency associated with   
\beq\lb{uiH}
\begin{aligned}
&\nabla \exs\sip\exs (\bC \exs \colon \! \nabla \bu^{m}) \,-\, (\alpha-\dfrac{\rho_f}{\gamma}) \nabla p^{m}  \,+\, \omega^2 (\rho-\dfrac{\rho_f^2}{\gamma}) \bu^{m} \,=\, \bzero  \,\,    & \quad \text{in}\,\,\, &D_m, \\*[0.5mm] 
&\dfrac{1}{\gamma\omega^2} \nabla^2p^{m} \,+\, {M}^{-1} p^{m} \,+\, (\alpha-\dfrac{\rho_f}{\gamma}) \nabla \exs\sip\exs \bu^{m}  \,=\, \bzero  \,\,   & \quad \text{in}\,\,\, &D_m, \\*[1mm] 
&(\bn \exs\sip\exs \bC \colon\! \nabla \bu^{m},p^{m}) \,=\, \bzero      &\quad \textrm{on}~& \partial D_m, 
\end{aligned}
\eeq
which are inherently complex due to the complexity of $\gamma$, then $(\bu\ff,p\ff) = \bzero$ in $D_m$. The unique continuation principle then implies that $(\bu\ff,p\ff) = \bzero$ in $\R^3$ and thus $(\bg_s,g_f) = \bzero$ according to~\eqref{uf} which completes the proof. 

\section{Proof of Lemma~\ref{I{T}>0}}  \label{I{T}>0p}

The boundedness of $T$ stems from the well-posedness of the forward problem~\eqref{GE} and trace theorems. To observe~\eqref{pos-IT}, let $\bpsi \in H^{-1/2}(\Gamma)^3 \times H^{-1/2}(\Gamma) \times H^{1/2}(\Gamma)$ and consider $(\bu, p)$ satisfying \eqref{GE} with $(\bt\ff, q\ff,p\ff) = \bpsi$. Following similar steps used to obtain~\eqref{ImWik-GE}, one finds $\forall \exs \bpsi \in H^{-1/2}(\Gamma)^3 \times H^{-1/2}(\Gamma) \times H^{1/2}(\Gamma)$,
\beq\lb{T-bound}
\Im \dualGA{\bpsi}{T\bpsi} ~=~ -\frac{\omega}{\kappa} \nxs \norms{\nxs \bq \nxs}^2_{L^2(\R^3 \setminus\nxs\Gamma)} +~ \Im \exs \big\langle \mathfrak{P}\big(\dbr{\bu},\dbr{\exs {p} \xxs}, - \dbr{q} \big), \big(\dbr{\bu},\dbr{\exs {p} \xxs}, - \dbr{q} \big) \big\rangle_{\Gamma}.
\eeq
Then, the Lemma's claim follows immediately from~\eqref{ImP}.

\section{Proof of Lemma~\ref{T-invs0}}  \label{T-invs0p}
\begin{enumerate}
\item{}~First, observe that $T$ given by~\eqref{T} is Fredholm with index zero. The argument directly follows the proof of~\cite[Lemma 2.3]{Fiora2003} and makes use of the integral representation theorems and asymptotic behavior of the poroelastodynamics fundamental solution on $\Gamma$~\cite{scha2012}. One then further demonstrate the injectivity of~$T$ by assuming that there exists $\bpsi \in H^{-1/2}(\Gamma)^3 \times H^{-1/2}(\Gamma) \times H^{1/2}(\Gamma)$ so that $T(\bpsi)=\bzero$. Consequently,~\eqref{vinf2} reads that $(\bu, p) \!=\!\bzero$ in~$\mathbb{R}^3\nxs\backslash\nxs\overline{\Gamma}$, which implies that $(\bt, \llangle q \rrangle) \!=\!\bzero$ on $\Gamma$ according to~\eqref{Neu}. Then, the third to fifth of~\eqref{GE} indicate that $\bpsi = \bzero$, and thus the Lemma's claim follows.  
\item{}~We proceed using a contradiction argument. Assume that estimation \eqref{co-T0} does not hold true, then one can find a sequence $({\bpsi}_n)_{n\in\mathbb{N}^*}\subset H^{-1/2}(\Gamma)^3 \times H^{-1/2}(\Gamma) \times H^{1/2}(\Gamma)$ such that for all $n\in\mathbb{N}^*$,

\begin{equation}
\label{VarphiProperties}
 |\langle{\bpsi}_n,T({\bpsi}_n)\rangle| \,\,<\,\, \frac{1}{n}, \qquad \quad\|{\bpsi}_n\| = 1.
\end{equation}
Since $({\bpsi}_n)$ is bounded, one may assume up to extracting a subsequence that $({\bpsi}_n)$ converges weakly to some ${\bpsi}\in H^{-1/2}(\Gamma)^3 \times H^{-1/2}(\Gamma) \times H^{1/2}(\Gamma)$. Now, let $({\bu}_n,p_n) \in H^1_{\tiny\textrm{loc}}(\R^3 \nxs\setminus\nxs\overline{\Gamma})^3 \nxs\times\nxs H^1_{\tiny\textrm{loc}}(\R^3 \nxs\setminus\nxs\overline{\Gamma})$ solve the poroelastic scattering problem~\eqref{GE} for $(\bt\ff,q\ff,p\ff)={\bpsi}_n$. In this setting,~\eqref{T-bound} reads that for ${\bpsi}={\bpsi}_n$, 
\beq\lb{phinTphin}
 |\langle{\bpsi}_n,T({\bpsi}_n)\rangle| ~\geqslant~ \frac{\omega}{\kappa} \nxs \norms{\nxs \bq_n \nxs}^2_{L^2(\R^3\setminus\nxs\overline{\Gamma})^3} -~ \Im \exs \big\langle \mathfrak{P}\big(\dbr{\bu_n},\dbr{\exs {p_n} \xxs}, - \dbr{q_n} \big), \big(\dbr{\bu_n},\dbr{\exs {p_n} \xxs}, - \dbr{q_n} \big) \big\rangle_{\Gamma},
\eeq
wherein
\beq\lb{qnqn}
\begin{aligned}
&\bq_n ~:=~ \frac{1}{\gamma\omega^2}\big(\nabla p_n \,-\, \rho_f\omega^2\exs{\bu}_n\big) & \text{in} \,\,\,\, \R^3 \nxs\setminus\nxs\overline{\Gamma}, \\*[0.25mm]
& q_n ~:=~ \frac{1}{\gamma\omega^2}\big(\nabla p_n \sip\xxs \bn \,-\, \rho_f\omega^2\exs{\bu}_n \sip\xxs \bn\big) & \text{on} \,\,\,\, \Gamma.
\end{aligned}
\eeq

In light of~\eqref{ImP},~\eqref{VarphiProperties} and~\eqref{phinTphin}, observe that $\bq_n$ converges strongly to zero in $L^2(\mathbb{R}^3\nxs\setminus\nxs\overline{\Gamma})^3$. Next, from the wellposedness of the scattering problem and the continuity of the trace operator, one finds that  
\begin{equation}
\begin{aligned}
\big{\|}{\nxs\big(\dbr{\bu_n},\dbr{\exs {p_n} \xxs}, - \dbr{q_n} \big)\nxs}\big{\|}_{\tilde{H}^{1/2}(\Gamma)^3 \times \tilde{H}^{1/2}(\Gamma) \times \tilde{H}^{-1/2}(\Gamma)} ~&\leq\, \text{c} \, \|({\bu}_n,p_n)\|_{H^1_{\text{loc}}(\R^3\nxs\setminus\nxs\overline{\Gamma})^3 \times H^1_{\text{loc}}(\R^3\nxs\setminus\nxs\overline{\Gamma})}\\*[0.5mm]
&\hspace{-13mm}\leq\, \text{c}' \,  \|{\bpsi}_n\|_{H^{-1/2}(\Gamma)^3 \times H^{-1/2}(\Gamma) \times H^{1/2}(\Gamma)} ~=~ \text{c}',
\end{aligned}
\end{equation}
for a constant $\text{c}>0$ (\emph{resp.}~$\text{c}'>0$) independent of $({\bu}_n,p_n)$ (\emph{resp.}~${\bpsi}_n$). The boundedness of sequences $\dbr{\bu_n}$, $\dbr{\exs {p_n} \xxs}$ and $\dbr{q_n}$ indicates, up to extracting a subsequence, that they respectively converge weakly to some $\dbr{\bu}$, $\dbr{\exs {p} \xxs}$ and $\dbr{q}$. Then, the compactness of $\bar{\mathcal{S}}^*$, mapping $(\dbr{\bu_n},\dbr{\exs {p_n} \xxs}, - \dbr{q_n})$ to $({\bu}_n,p_n)$ according to~\eqref{vinf2}, implies that $({\bu}_n,p_n)$ converges strongly to some $(\bu,p)\in H^1_{\text{loc}}(\R^3\nxs\setminus\nxs\overline{\Gamma})^3 \times H^1_{\text{loc}}(\R^3\nxs\setminus\nxs\overline{\Gamma})$. 
From~\eqref{GE} and~\eqref{KS}, one may show that ${\bu}_n$ satisfies
\beq\lb{GEun}
\begin{aligned}
&\nabla \exs\sip\exs (\bC \exs \colon \! \nabla \bu_n) \,+\, (\rho - \alpha \rho_f) \exs \omega^2 \bu_n ~=~ (\alpha \gamma - \rho_f)\exs \omega^2 \bq_n \quad& \text{in}&\,\,\, {\R^3 \nxs\setminus\nxs \overline{\Gamma}}, \\*[0.5mm]
& \frac{\partial{\textrm{\bf u}}_\varsigma}{\partial r} \,-\, \text{i} k_\varsigma {\textrm{\bf u}}_\varsigma ~=~ o\big(r^{-1} e^{-\mathfrak{I}(k_\varsigma) r}\big),  \quad\,\,\,  \varsigma = {\textrm{p}_1},{\textrm{p}_2},{\textrm{s}}, \,\,\, \textrm{\bf u} = \bu_n \quad& \text{as}&\,\,\, r:=|\bxi|\to \infty.
\end{aligned}   
\eeq 

As $n \to \infty$, one obtains that $\bu$ satisfies
\beq\lb{GEu}
\begin{aligned}
&\nabla \exs\sip\exs (\bC \exs \colon \! \nabla \bu) \,+\, (\rho - \alpha \rho_f) \exs \omega^2 \bu ~=~ \bzero  \quad& \text{in}&\,\,\, {\R^3 \nxs\setminus\nxs \overline{\Gamma}}, \\*[0.5mm]
& \frac{\partial{\bu}_\varsigma}{\partial r} \,-\, \text{i} k_\varsigma {\bu}_\varsigma ~=~ o\big(r^{-1} e^{-\mathfrak{I}(k_\varsigma) r}\big),  \quad\,\,\,  \varsigma = {\textrm{p}_1},{\textrm{p}_2},{\textrm{s}},  & \text{as}&\,\,\, r:=|\bxi|\to \infty,
\end{aligned}   
\eeq 
implying that $\bu=\bzero$. Note that the conservative Navier system in first of~\eqref{GEu} does not generate the exponentially decaying radiation pattern in the second of~\eqref{GEu} (which is associated with the lossy Biot system). In this setting,~\eqref{qnqn} indicates that $p$ is constant in $\R^3 \nxs\setminus\nxs \overline{\Gamma}$ and since $p$ is subject to a radiation condition similar to the second of~\eqref{GEu}, one deduces that $p=0$. Now, from the wellposedness of the forward scattering problem, one finds that ${\bpsi}=0$. On the other hand, observe from~\eqref{frakP} that
\beq\lb{Nbpsi}
 \|{\bpsi}_n\nxs \|^2_{H^{-1/2}(\Gamma)^3 \times H^{-1/2}(\Gamma) \times H^{1/2}(\Gamma)} ~=~ \big\langle {\bpsi}_n, \mathfrak{P}(\dbr{\bu_n},\dbr{\xxs p_n \xxs},-\dbr{q_n \xxs}) \big\rangle_\Gamma \,-\, \big\langle {\bpsi}_n, (\bt_n,\llangle{q_n\nxs}\rrangle,\llangle{\xxs p_n \nxs}\rrangle) \big\rangle_\Gamma.
\eeq 
Then, the regularity of the trace operator implies that $(\dbr{\bu_n},\dbr{\xxs p_n \xxs},-\dbr{q_n \xxs})$ and $(\bt_n,\llangle{q_n\nxs}\rrangle,\llangle{\xxs p_n \nxs}\rrangle)$ converges strongly to zero. Consequently,~\eqref{Nbpsi} reads that $\|{\bpsi}_n \|^2$ goes to zero as $n$ goes to infinity. This result contradicts~\eqref{VarphiProperties} and establishes the lemma's statement.
\end{enumerate}

\section{Proof of Lemma~\ref{FF_op}}  \label{FF_opp}

The injectivity of $\Lambda$ is implied by the injectivity of operators ${\mathcal{S}}^*$, $T$, and $\mathcal{S}$ in the factorization
$\Lambda = \bar{\mathcal{S}}^* \exs T \exs \mathcal{S}$ (see Lemmas~\ref{H*p} and~\ref{T-invs0}). The compactness of~$\Lambda$ follows immediately from the compactness of  $\mathcal{S}^*$ -- and thus that of $\mathcal{S}$ (Lemma~\ref{H*p}), and the boundedness of~$T$ (Lemma~\ref{I{T}>0}). The last claim may be verified by establishing the injectivity of~$\Lambda^* = \mathcal{S}^* \mathcal{P}^*$ which in light of Lemma~\ref{H*p}, results from the injectivity of $\mathcal{P}^*$. To demonstrate the latter, one first finds from the definition of $\mathcal{P}$ that the adjoint operator $\mathcal{P}^*\colon\nxs L^2(\OOd)^3\times L^2(\mathcal{G}) \rightarrow \tilde{H}^{1/2}(\Gamma)^3 \nxs\times\nxs \tilde{H}^{1/2}(\Gamma) \nxs\times\nxs \tilde{H}^{-1/2}(\Gamma)$ is given by $\mathcal{P}^*(\bg) = (\dbr{\xxs \bu^\star},\dbr{\exs {p}^\star \xxs}, - \dbr{\xxs q^\star} )$ on $\Gamma$ where $(\bu^\star, p^\star) \in H^1_{\tiny\textrm{loc}}(\R^3 \nxs\setminus\nxs \lbrace\overline{\Gamma} \cup \overline{\mathcal{G}}\rbrace)^3 \nxs\times\nxs H^1_{\tiny\textrm{loc}}(\R^3 \nxs\setminus\nxs \lbrace\overline{\Gamma} \cup \overline{\mathcal{G}}\rbrace)$ solves 
\beq\lb{ustar}
\begin{aligned}
&\nabla \exs\sip\exs (\bC \exs \colon \! \nabla \bu^\star)(\bxi) \,-\, (\alpha-\dfrac{\rho_f}{\bar\gamma}) \nabla p^\star(\bxi)  \,+\, \omega^2 (\rho-\dfrac{\rho_f^2}{\bar\gamma}) \bu^\star(\bxi)\,=\, -\int_\mathcal{G} \delta(\by-\bxi) \bg_s(\by) \exs \textrm{d}S_{\by} \quad& \text{in}&\,\,\, {\R^3 \nxs\setminus\nxs \lbrace\overline{\Gamma} \cup \overline{\mathcal{G}}\rbrace}, \\*[1mm]
&\dfrac{1}{\bar{\gamma}\omega^2} \nabla^2p^\star(\bxi) \,+\, {M}^{-1} p^\star(\bxi) \,+\, (\alpha-\dfrac{\rho_f}{\bar\gamma}) \nabla \exs\sip\exs \bu^\star(\bxi)  \,=\, -   \int_\mathcal{G} \delta(\by-\bxi) g_f(\by) \exs \textrm{d}S_{\by}   \quad& \text{in}&\,\,\, {\R^3 \nxs\setminus\nxs \lbrace\overline{\Gamma} \cup \overline{\mathcal{G}}\rbrace}, \\*[2.5mm]
& (\bt^\star, \llangle\exs q^\star \rrangle, \llangle\exs p^\star \rrangle) ~=~ \bar{\mathfrak{P}}^{\textrm{T}}(\dbr{\xxs \bu^\star},\dbr{\exs {p}^\star \xxs}, - \dbr{\xxs q^\star} ), \quad \dbr{\xxs \bt^\star} ~=~ \bzero  \quad&  \text{on}&\,\,\, {\Gamma}, 
\end{aligned}   
\eeq  
wherein `T' indicates transpose, and
\[
[\bt^\star,q^\star](\bu^\star,p^\star) \,:=\, \big[\bn \nxs\cdot\nxs \bC \colon \!\nxs \nabla \bu^\star - \alpha \exs p^\star  \bn, \, \dfrac{1}{\bar{\gamma}\omega^2}(\nabla p^\star \sip\exs \bn - \rho_f \omega^2 \bu^\star \sip\exs \bn) \big] \qquad  \text{on} \,\,\,  {\Gamma}. 
\]
Note that $(\bu^\star,p^\star)$ satisfy the radiation condition as $|\bxi | \rightarrow \infty$, similar to the complex conjugate of~\eqref{KS}. It should be mentioned that $\mathcal{P}^*$ is obtained by applying the poroelastodynamics reciprocal theorem between $(\bu^\star, p^\star)$ of~\eqref{ustar} and $(\bu, p)$ satisfying~\eqref{GE} similar to~\ref{H*pruf}. Now, let $\bg \in L^2(\OOd)^3 \times L^2(\mathcal{G})$, and assume that $\mathcal{P}^*(\bg) = \bzero$, i.e.,~$(\dbr{\xxs \bu^\star},\dbr{\exs {p}^\star \xxs}, - \dbr{\xxs q^\star} ) = \bzero$.
In this case, $(\bu^\star,p^\star)$ and $q^\star$ are continuous across $\Gamma$ independent of $\bar{\mathfrak{P}}^{\textrm{T}}$. Therefore, $(\bu^\star, p^\star) \in H^1_{\tiny\textrm{loc}}(\R^3 \nxs\setminus\nxs\overline{\mathcal{G}})^3 \nxs\times\nxs H^1_{\tiny\textrm{loc}}(\R^3\nxs\setminus\nxs\overline{\mathcal{G}})$ and $(\bt^\star,  q^\star ,  p^\star ) = \bzero$. Thanks to this result, one can show as in the proof of Lemma~\ref{H*p} that $(\bu^\star,p^\star) = \bzero$, and consequently $\bg = \bzero$ which concludes the proof.

%
\bibliography{inverse,crackbib}

\end{document}